\theoremstyle{plain}
\newtheorem{thm}{Theorem}[section]
\newtheorem*{thm*}{Theorem}
\newtheorem{prop}[thm]{Proposition}
\newtheorem{lemma}[thm]{Lemma}
\newtheorem*{lemma*}{Lemma}
\newtheorem{corollary}[thm]{Corollary}
\newtheorem{notation}[thm]{Notation}
\theoremstyle{definition}
\newtheorem{definition}[thm]{Definition} 
\newtheorem*{definition*}{Definition} 
\newtheorem{example}[thm]{Example}
\theoremstyle{remark}
\newtheorem{remark}[thm]{Remark}
\numberwithin{equation}{thm}
\newcommand{\Z}{\mathbb{Z}}
\newcommand{\define}{\mathrel{\mathop:}=}
\newcommand{\longrightharpoonup}{\relbar\joinrel\rightharpoonup}
\newcommand{\pfold}[1]{\mathbin{\raisebox{-0.1em}{\ensuremath{\overset{#1}{\longrightharpoonup}}}}}
\newcommand{\Shadow}{\mathrm{Sh}} 
\newcommand{\id}{{\bf{1}}} 
\newcommand{\Dir}{{\mathrm{Dir}}}
\renewcommand{\v}{{\mathrm{v}}}
\newcommand{\type}{\tau} 
\newcommand{\dtype}{\hat{\tau}} 
\newcommand{\foot}{\mathrm{ft}} 
\newcommand{\folds}{\mathrm{F}} 
\newcommand{\Ch}{\mathrm{Ch}} 
\newcommand{\aW}{W} 
\newcommand{\sW}{W_0} 
\newcommand{\sS}{S_0} 
\newcommand{\hyp}{\mathcal{H}} 
\newcommand{\shyp}{\mathcal{S}} 
\newcommand{\Cf}{\mathrm{C}_0} 
\begin{document}
	
\title[Shadows in Coxeter groups]{Shadows in Coxeter groups}
\author{Marius Graeber}
\address{Karlsruhe Institute of Technology, Englerstrasse 2, 76137 Karlsruhe, Germany}
\email{marius.graeber@kit.edu}

\author{Petra Schwer}
\address{Otto-von-Guericke University Magdeburg, IAG, Postschließfach 4120, 39016 Magdeburg, Germany}
\email{petra.schwer@ovgu.de}

\date{ \today }
\thanks{}

\begin{abstract}
For a given $w$ in a Coxeter group $W$ the elements $u$ smaller than $w$ in Bruhat order can be seen as the end-alcoves of stammering galleries of type $w$ in the Coxeter complex $\Sigma$. We generalize this notion and consider sets of end-alcoves of galleries that are positively folded with respect to certain orientation $\phi$ of $\Sigma$. We call these sets \emph{shadows}. Positively folded galleries are closely related to the geometric study of affine Deligne-Lusztig varieties, MV polytopes, Hall-Littlewood polynomials and many more agebraic structures. \newline 
In this paper we will introduce various notions of orientations and hence shadows and study some of their algorithmic properties.

\end{abstract}

\maketitle

\section{Introduction}\label{introduction}

It is well known that the Bruhat order on a Coxeter group $(W,S)$ has a geometric interpretation in terms of galleries: the set of all elements $y\leq x$ for a fixed $x\in W$ is the set of all end-alcoves of \emph{folded} (or stammering) galleries of type $x$ in the Coxeter complex $\Sigma=\Sigma(W,S)$. One can show that for given $x, y\in W$ one has  $y\leq x$ in Bruhat order if and only if  there exists a folded  gallery of type $x$ which ends in $y$. 

In the present paper we look at sets of end-alcoves of folded galleries where the foldings are positive with respect to a given orientation $\phi$ of the complex $\Sigma$. Such galleries will be called \emph{$\phi$-positively folded}. 
An orientation on a Coxeter complex essentially decides for every pair of an alcove and a hyperplane containing one of its co-dimension one faces, whether or not the alcove lies on a positive side of the hyperplane. 

The notion of a positively folded gallery goes back to \cite{GaussentLittelmann} (respectively \cite{Littelmann1}). This concept requires a refined notion of what is typically known as a gallery in a Coxeter complex, namely in addition to the sequence of alcoves a gallery contains one needs to remember a specific codimension one face of any two subsequent alcoves. This is equivalent to a choice of a decorated word in $S$, i.e. a word plus the knowledge at which positions the corresponding gallery stammers.

(Positively) folded galleries and paths have appeared in several places some of which we will now highlight. 
Folded paths were used to compute Hall-Littlewood polynomials by C. Schwer in \cite{Schwer}.  Kapovich  and Millson study folded paths in connection with their proof of the saturation conjecture for $SL_n$ in  \cite{KM}. 
Ehrig \cite{Ehrig} studies MV-polytopes by means of Bruhat-Tits buildings and gives a type-independent definition of MV-polytopes by assigning to every LS-gallery in the sense of \cite{GaussentLittelmann} an explicitly constructed MV-polytope.   
There are probably other references we have missed. 

The aim of the present paper is to extract and generalize some of the combinatorics contained in the aforementioned applications and the joint work of the second author with Mili\'{c}evi\'{c} and Thomas on affine Deligne--Lusztig varieties \cite{MST}. We would like to make these folding games accessible on a purely combinatorial level while at the same time providing tools for future applications in other areas of mathematics. There is upcoming work by the second author together with 
Mili\'{c}evi\'{c}, Naqvi and Thomas
\cite{MNST} in which shadows are studied further and will be related to retractions from infinity based at chimneys as well as double coset intersections.
Additionally shadows have direct connections with MV-cycles and polytopes and non-emptiness of affine Deligne-Lusztig varieties. It is for example interesting to see (and no coincidence) that the length additivity condition in Theorem~\ref{thm:partial_shadow} also appears in work of Mili\'{c}evi\'{c} (Beazley), see Theorem 1.4 in \cite{Beazley}.   

\begin{wrapfigure}[17]{r}[0pt]{0.5\textwidth} 
	\begin{center}
		\vspace{-2ex}
		\includegraphics[width=0.4\textwidth, angle=180]{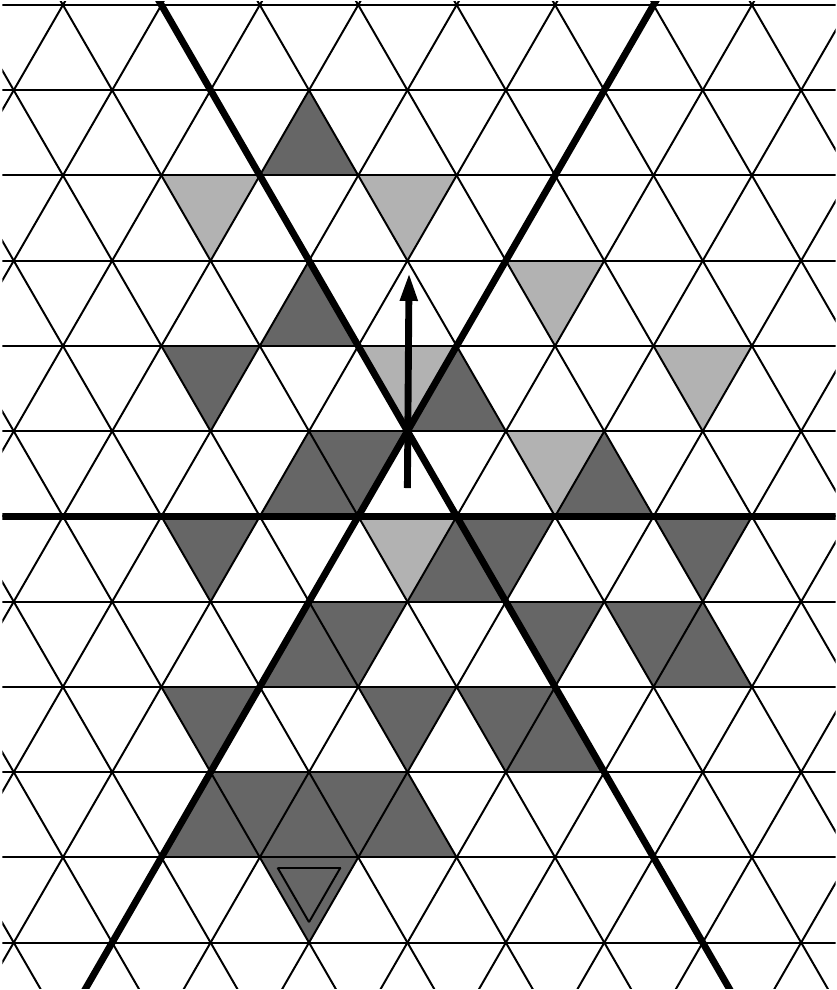}
		\vspace{2ex}
		\caption{A shadow in type $\tilde{A}_2$. }
		\label{fig:shadows-soft-hard-1}
	\end{center}
\end{wrapfigure}

The main concept of the paper is the notion of a \emph{shadow}, which we formally introduce in Definition~\ref{def:shadow}. The shadow of $w$ with respect to some orientation $\phi$ is the set of end-alcoves of all galleries of type $w$ that are $\phi$-positively folded.  

We will study a natural class of orientations, the so called Weyl chamber orientations, which is induced by a choice of a regular direction or, equivalently, by a parallel class of Weyl chambers. Our main results are recursive descriptions of shadows with respect to these Weyl chamber orientations. See Theorems~\ref{thm:regular_shadow} and~\ref{thm:partial_shadow}.  

An example for a shadow with respect to a Weyl chamber orientation is shown in Figure~\ref{fig:shadows-soft-hard-1}. This picture illustrates the full and regular shadows in a type $\tilde A_2$ Coxeter group of the outlined alcove at the top with respect to the orientation determined by the regular vector. Details are explained in Example~\ref{ex:shadows}. 

{\textbf{This article is organized as follows:}}
We use the second section to fix notation for several basic facts on Coxeter groups. Orientations on Coxeter complexes and some of their properties are discussed in Section~\ref{sec:orientations}, where we also define the notion of a regular orientation. 
Folded galleries, ways to manipulate them as well as some statistics on the number of folds are discussed in Section~\ref{sec:folded galleries}. In Section~\ref{sec:shadows} we then define the central notion of the present paper: shadows. 
Section~\ref{sec:regular shadows} finally contains the algorithms and recursive descriptions of regular shadows and their restricted cousins. 

\thanks{The second author would like to thank Anne Thomas and Jacinta Torres for helpful comments and Elizabeth Mili\'{c}evi\'{c} for her thoughtful remarks on an earlier draft of the paper.}

%
%
\section{Coxeter systems and Coxeter complexes}\label{sec:Coxeter groups}

\label{sec:Coxeter}

We assume that the reader is familiar with the standard notions and objects associated to Coxeter groups. For details please refer to one of the many good textbooks on the topic; for example \cite{BjoernerBrenti, Davis} or \cite{Humphreys}.  

Throughout this paper $(W,S)$ will denote a Coxeter system. We will write $u, v, w$ for words in the generators $S$ of $W$ and $[u],[v], [w]$ for the associated elements in $W$. In general elements in $W$ will be denoted by $x,y,z$. 
Any subset $S'\subset S$ defines a \emph{standard parabolic} subgroup $W_{S'}$ of $W$ and each pair $(W_{S'}, S')$ is a Coxeter system in its own right. 

For a given Coxeter system write $\Sigma=\Sigma(W,S)$ for the set of all left-cosets $xW_{S'}$ of standard parabolic subgroups in $W$ which is partially ordered by reverse inclusion and hence  forms an abstract simplicial complex. The vertex set of $\Sigma$ is the set containing all cosets of maximal parabolic subgroups corresponding to subsets $S'=S\setminus\{s\}$. 
The maximal simplices in the \emph{Coxeter complex} $\Sigma$ are called \emph{alcoves} and their codimension one faces \emph{panels}. We will typically denote alcoves by $c,d$ and panels by $p,q$. Note that each panel $p$ corresponds to a coset of a parabolic subgroup of the form $xW_{\{s\}}$ for some $s\in S$. In this case we say $p$ has \emph{type} $s$ and write $\type(p)=s$. 

The group $W$ contains a subset $R\define\bigcup_{x\in W} xSx^{-1}$ of \emph{reflections} each of which fixes a hyperplane (or wall) in $\Sigma$. For a given reflection $r\in R$ we denote the associated hyperplane by $H_r$. 
We say that a hyperplane $H$ \emph{separates} alcoves $c$ and $d$ if the two alcoves are contained in different half-spaces determined by $H$.  

In case that $(\aW, S)$ is a euclidean Coxeter system of type $\widetilde X$ the group $\aW$ splits as a semi-direct product of a spherical Weyl group $\sW$ of type $X$ and a translation group $T$ acting on $\Sigma$. 
The set of \emph{special} vertices in $\Sigma$ are the ones whose stabilizer in $\aW$ is isomorphic to $\sW$.  
In this setting $\Sigma$ does have a geometric realization as a tiled euclidean $n$-space with $n=\# S-1$ if $(W,S)$ is irreducible. The group $T$ is isomorphic to $\Z^n$ and corresponds to the co-root lattice. By slight abuse of notation we denote the geometric realization of $\Sigma$ also by $\Sigma$. 

Fix a special vertex $0$ and call it the \emph{origin} of $\Sigma$.   For each special vertex $v$ in the orbit of $0$ under $T$ consider the set $\mathcal{H}_v$ of hyperplanes through $v$. The closures of the connected components of $\Sigma \setminus \cup_{H\in\mathcal{H}_v} H$ are called \emph{Weyl chambers} in $\Sigma$. \newline
The set of equivalence classes of parallel rays in $\Sigma$ form the boundary sphere $\partial\Sigma$. This sphere inherits a natural tiling from  $\Sigma$ by taking as the hyperplanes in $\partial \Sigma$ the parallel classes of hyperplanes in $\Sigma$. The maximal simplices in $\partial\Sigma$ then are precisely the parallel classes of Weyl chambers in $\Sigma$. We sometimes refer to the maximal simplices in the boundary as \emph{chambers} in order to distinguish them from alcoves in $\Sigma$. As a simplicial complex $\partial\Sigma$ is isomorphic to the Coxeter complex of $(\sW, \sS)$ where $\sS$ is a subset of $S$ generating a copy of $\sW$.  

We will choose the identifications of elements in an affine Coxeter group $\aW$ with the alcoves in $\Sigma$ and the identification of element in the associated $\sW$ with chambers in $\partial \Sigma$ in a compatible way. The identity in $\sW$ labels a chamber at infinity which has a unique representative $\Cf$  with basepoint $0$ in $\Sigma$, the \emph{fundamental Weyl chamber}. The unique alcove in $\Cf$ containing $0$ is labeled with $\id$. Then the $\aW$ action on $\Sigma$ yields identifications of elements $x\in \aW$ with alcoves in $\Sigma$. The walls of $\Cf$ correspond to the generators in $S$ that also generate $\sW$. The equivalence class of a Weyl chamber $x.\Cf$ with cone point $0$ has label $x$ in $\sW$. That is the image of some $x\in\aW$ under the natural projection $p:\aW \to \sW$ can be interpreted both as the local spherical direction of an element $x=ty$ with $t\in T$ and $y\in\sW$ and as the direction at infinity towards which $y$ points when seen as an alcove with basepoint $t.0$.

%
%
\section{Orientations on Coxeter complexes}\label{sec:orientations}
In this section we will introduce orientations of Coxeter complexes and provide some natural examples. We start with the definition and some basic properties in the first subsection below. 

\subsection{General notions}

If not otherwise stated $(W,S)$ is any Coxeter system and $\Sigma$ is its associated Coxeter complex. 

\begin{definition}[Orientations of $\Sigma$]
	An \emph{orientation} $\phi$ of $\Sigma$ is a map which assigns to a pair of a panel $p$ and an alcove $c$ containing $p$ a value in $\{+1, -1\}$.  We say that $c$ is on the \emph{$\phi$-positive side} (respectively the \emph{$\phi$-negative side}) of $p$ if $\phi(p,c)=+1$ (respectively -1).
\end{definition}

\begin{example}[Trivial orientations]
	One way to produce an orientation is to take the map $\phi$ to be a constant map which is either $\equiv +1$ or $\equiv -1$. We will refer to these orientations as the \emph{trivial positive/negative orientation}. 
\end{example}

Sometimes we will want to exclude orientations which locally behave like trivial ones and therefore introduce the following two notions. 

\begin{definition}[Locally nonnegative/nontrivial orientations]
	An orientation $\phi$ of $\Sigma$ is called 
	\begin{enumerate}[label=(\roman*)]
		\item \emph{locally nonnegative} if every panel $p$ has at least one $\phi$-positive side.
		\item \emph{locally nontrivial} if every panel $p$ has exactly one $\phi$-positive side.
	\end{enumerate}
\end{definition}


The Coxeter group $W$ naturally acts on the set of all orientations of the associated Coxeter complex. 

\begin{definition}[$W$-action on orientations]
	Let $(W, S)$ be a Coxeter system with Coxeter complex $\Sigma$. Then the natural left action of $W$ on the alcoves and panels of $\Sigma$ induces a natural left action of $W$ on the orientations of $\Sigma$ via $(x\cdot \phi)(p,c) := \phi(x^{-1}p, x^{-1}c)$.
\end{definition}

\begin{definition}[Wall consistent orientations]
	An orientation $\phi$ of $\Sigma$ is \emph{wall consistent} if for any wall $H$ in $\Sigma$ and all alcoves $c,d$ which are in a same half-space of $H$ and have panels $p$ and $q$ in $H$ one has: $\phi(p,c)=\phi(q,d)$.   
	We may then call a half-space $H^\varepsilon$ of $H$, such that $\phi(p,c)=+1$ for one (and hence every) adjacent alcove in $H^\varepsilon$, a \emph {positive side} of $H$ with respect to $\phi$ or simply \emph{$\phi$-positive side}. The \emph{$\phi$-negative sides} are defined analogously.
\end{definition}


There are several ways to define a natural orientation on a Coxeter complex. We first introduce one class of orientations which works for arbitrary Coxeter groups. They are induced by a choice of an alcove or, equivalently, a regular point in a (geometric realization of) a Coxeter complex and are hence called alcove (or regular) orientations. 

\begin{definition}[Alcove orientation] \label{def:alcove orientation}
	Let $c$ be a fixed alcove in $\Sigma$. For any alcove $d$ and any panel $p$ in $d$, let $\phi_{c}(p,d)$ be $+1$ if and only if $d$ and $c$ lie on the same side of the wall spanned by $p$. The resulting orientation $\phi_{c}$ is called the \emph{alcove  orientation towards $c$} or short the \emph{$c$--orientation}. 
\end{definition}

Similar to Definition~\ref{def:alcove orientation} but more generally one can define an orientation with respect to a choice of any simplex, or in fact any point in a geometric realization of $\Sigma$. Obviously the alcove orientations are a sub-class of the orientations introduced in the next definition.  

\begin{definition}[Simplex orientation] \label{def:simplex orientation}
	Let $b$ be any simplex in $\Sigma$. For any alcove $c$ and any panel $p$ incident to $c$, let $\phi_b(p,c)$ be $+1$ if and only if either $c$ and $b$ lie on the same side of the wall $H$ containing $p$, or if $b$ lies inside $H$. The resulting orientation $\phi_b$ is called the \emph{simplex orientation towards $b$} or short the \emph{$b$--orientation}.  
\end{definition}

\begin{example}[Alcove and simplex orientation]
	Figure~\ref{fig:simplex orientations} shows two different simplex orientations on a type $A_2$ Coxeter complex. The one on the left hand side is induced by the alcove labeled $c$, while the one on the right hand side is induced by the panel $p$. 
\end{example}

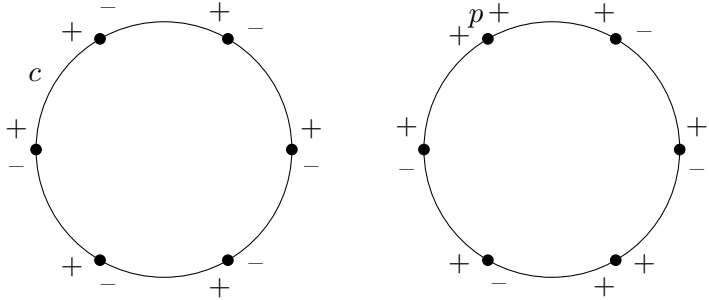
\begin{figure}[h]
	\begin{minipage}[c]{0.4\textwidth}
		\begin{tikzpicture}
		\def \n {6}
		\def \radius {1.7cm}
		\def \margin {8} 
		
		\draw circle (\radius);
		\node[draw=none, fill=none, inner sep=0pt, minimum width=6pt] at ({360/\n * 2.5}:{\radius+\margin}) {$c$};

		\foreach \s in {1,...,\n}
		{ 
			\node[circle, draw, fill=black!100, inner sep=0pt, minimum width=4pt] at ({360/\n * (\s - 1)}:\radius) {};
		}
		
		\foreach \s in {1,...,3}
		{ 
			\node[draw=none, fill=none] at ({360/\n * (\s - 1)+\margin}:{\radius+\margin}) {+};
			\node[draw=none, fill=none] at ({360/\n * (3+\s - 1)-\margin}:{\radius+\margin}) {+};
			
			\node[draw=none, fill=none] at ({360/\n * (\s - 1)-\margin}:{\radius+\margin}) {--};
			\node[draw=none, fill=none] at ({360/\n * (3+\s - 1)+\margin}:{\radius+\margin}) {--};
		}
		\end{tikzpicture}	
	\end{minipage}
	\begin{minipage}[c]{0.4\textwidth}
		\begin{tikzpicture}
		\def \n {6}
		\def \radius {1.7cm}
		\def \margin {9} 
		
		\draw circle (\radius);
		\node[draw=none, fill=none, inner sep=0pt, minimum width=6pt] at ({360/\n * 2}:{\radius+\margin}) {$p$};

		\foreach \s in {1,...,\n}
		{ 
			\node[circle, draw, fill=black!100, inner sep=0pt, minimum width=4pt] at ({360/\n * (\s - 1)}:\radius) {};
		}
		
		\foreach \s in {1,...,3}
		{ 
			\node[draw=none, fill=none] at ({360/\n * (\s - 1)+\margin}:{\radius+0.8*\margin}) {+};
			\node[draw=none, fill=none] at ({360/\n * (3+\s - 1)-\margin}:{\radius+0.8*\margin}) {+};
		}
		
		\node[draw=none, fill=none] at ({360/\n * (2)-\margin}:{\radius+0.8*\margin}) {+};
		\node[draw=none, fill=none] at ({360/\n * (5)+\margin}:{\radius+0.8*\margin}) {+};
		
		\node[draw=none, fill=none] at ({360/\n * (1)-\margin}:{\radius+0.8*\margin}) {--};
		\node[draw=none, fill=none] at ({360/\n * (6)-\margin}:{\radius+0.8*\margin}) {--};
		\node[draw=none, fill=none] at ({360/\n * (3)+\margin}:{\radius+0.8*\margin}) {--};
		\node[draw=none, fill=none] at ({360/\n * (4)+\margin}:{\radius+0.8*\margin}) {--};
		
		\end{tikzpicture}	
	\end{minipage}
	\caption{An alcove (left) and panel orientation (right) on the type $A_2$ Coxeter complex. }
	\label{fig:simplex orientations}
\end{figure}

\begin{lemma}[Basic properties]\label{lem:basic properties orientations}
	Let $(W,S)$ be a Coxeter group with Coxeter complex $\Sigma$. The following are true: 
	\begin{enumerate}[label=(\roman*)]
		\item\label{item:simplex} All simplex orientations are wall consistent and locally nonnegative. 
		\item All alcove orientations are wall consistent and locally nontrivial. 
	\end{enumerate}
\end{lemma}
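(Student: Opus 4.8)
The plan is to derive both statements by directly unwinding Definitions~\ref{def:alcove orientation} and~\ref{def:simplex orientation}, using only two elementary structural facts about the Coxeter complex $\Sigma$. First, every panel $p$ is contained in a unique wall, which I write $H_p$, and is incident to exactly two alcoves; moreover these two alcoves lie in the two distinct open half-spaces cut out by $H_p$. Secondly, an alcove, being a maximal simplex, is never contained in any wall. The first fact is the statement that for a panel $p = xW_{\{s\}}$ the ambient wall is $H_{xsx^{-1}}$ and the two incident alcoves are $x$ and $xs = (xsx^{-1})\cdot x$, which are interchanged by the reflection $xsx^{-1}$ and hence separated by $H_{xsx^{-1}}$; the second is immediate from dimension. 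Both are standard (cf.\ the textbook references in Section~\ref{sec:Coxeter groups}).

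For part~\ref{item:simplex}, fix a simplex $b$ and consider $\phi_b$. \emph{Wall consistency:} let $H$ be a wall and let $c,d$ be alcoves lying in a common half-space of $H$, with $p$ a panel of $c$ and $q$ a panel of $d$, both contained in $H$; then $H_p = H_q = H$ by uniqueness. By Definition~\ref{def:simplex orientation}, $\phi_b(p,c) = +1$ if and only if $b$ lies inside $H$ or $b$ and $c$ lie in a common half-space of $H$, and likewise for $\phi_b(q,d)$. The first alternative does not involve $c$ or $d$; and since $c$ and $d$ lie in a common half-space of $H$, ``$b$ and $c$ lie on the same side of $H$'' holds precisely when ``$b$ and $d$ lie on the same side of $H$'' holds. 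Hence $\phi_b(p,c) = \phi_b(q,d)$. \emph{Local nonnegativity:} let $p$ be a panel and $H = H_p$. If $b$ lies inside $H$, then $\phi_b(p,c) = +1$ for both alcoves $c$ incident to $p$. If not, then $b$ lies in exactly one of the two half-spaces of $H$, and the unique alcove incident to $p$ on that side is $\phi_b$-positive. Either way $p$ has at least one $\phi_b$-positive side; the case $b \subseteq H$ is precisely why simplex orientations are in general not locally nontrivial.

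For part~(ii), note that an alcove orientation $\phi_c$ is the simplex orientation $\phi_b$ for $b = c$ an alcove. Since $c$ is not contained in any wall, the alternative ``$b$ lies inside $H$'' in Definition~\ref{def:simplex orientation} never occurs, so $\phi_c$ coincides with the orientation of Definition~\ref{def:alcove orientation}; in particular wall consistency is the special case $b=c$ of part~\ref{item:simplex}. For local nontriviality, fix a panel $p$ and set $H = H_p$. As $c\not\subseteq H$, the alcove $c$ lies strictly in one of the half-spaces, say $H^+$, and the two alcoves incident to $p$ are $d^+\subseteq H^+$ and $d^-\subseteq H^-$. Then $\phi_c(p,d^+) = +1$ and $\phi_c(p,d^-) = -1$, so $p$ has exactly one $\phi_c$-positive side.

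I do not expect a genuine obstacle here; the lemma is essentially an exercise in bookkeeping with the sign conventions. The only points needing a moment's care are the structural fact that the two alcoves through a panel sit on opposite sides of its wall, and tracking the degenerate case ``$b$ inside $H$'' in the simplex orientation — which is exactly the source of the asymmetry between ``locally nonnegative'' in part~\ref{item:simplex} and ``locally nontrivial'' in part~(ii).
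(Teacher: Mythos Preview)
Your proof is correct and follows essentially the same approach as the paper's: both argue by the case distinction of whether the defining simplex $b$ lies in the wall $H$ or on one of its sides, and both deduce (ii) from (i) by observing that an alcove is never contained in a wall. Your version is somewhat more explicit about the underlying structural facts (uniqueness of the wall through a panel, the two incident alcoves lying on opposite sides), but the substance is the same.
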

\begin{proof}
	To see \ref{item:simplex} observe that for any wall $H$ there are two cases for the defining simplex $b$ of the given simplex orientation $\phi=\phi_b$. Either $b$ is contained in $H$ in which case both sides of $H$ are positive, or $b$ is contained in exactly one of the two sides of $H$ making this the positive side. In any case, two alcoves on a same side of $H$ with panels in $H$ always obtain the same sign under the given orientation $\phi$. Hence $\phi$ is wall consistent. From what we have said it is also clear that a simplex orientation can not assign $-1$ simultaneously to two alcoves sharing a panel. This implies \ref{item:simplex}. To deduce the second item it is enough to see that in this case there is no wall with two positive sides.      
\end{proof}

\subsection{Orientations on affine Coxeter complexes}
We now restrict to the affine case and introduce the class of orientations we will study most in this paper. It is determined by a choice of a chamber at infinity.

A wall consistent orientation chooses the same sign for all chambers having a panel in the same hyperplane $H$ and that are on the same side of $H$. This amounts to choosing a \emph{positive side} of $H$.  However, there is no need to choose  the positive sides of the  hyperplanes in a consistent way. But if so we will call these orientations periodic. See the next definition.   

\begin{definition}[Periodic orientations]
	A wall-consistent orientation $\phi$ of an affine Coxeter complex is \emph{periodic} if for any two parallel hyperplanes $H_1$ and $H_2$ and corresponding half-spaces $H_1^{\varepsilon_1}$ and $H_2^{\varepsilon_2}$, if $H_1^{\varepsilon_1} \subset H_2^{\varepsilon_2}$ then $H_1^{\varepsilon_1}$ is $\phi$-positive if and only if $H_2^{\varepsilon_2}$ is $\phi$-positive.
\end{definition}

Obviously the trivial orientations on an affine Coxeter complex are periodic. Note that the simplex induced orientations are not periodic as in every parallel class of hyperplanes one can find representatives having the defining simplex on different sides.

Periodic orientations have the nice property that they naturally induce an orientation on the boundary. 
We had already studied this interplay in Section 3 of \cite{MST}. Compare in particular Definitions 3.5 and 3.7 as well as Lemma 3.6. in \cite{MST} where one can essentially find what we recollect in \ref{lem:spherical inherited}, \ref{lem:affine inherited} and \ref{def:Weyl chamber orientation} below.  

\begin{lemma}[Induced spherical orientations]\label{lem:spherical inherited}
	Any periodic orientation $\phi$ on an affine Coxeter complex $\Sigma$ induces a wall-consistent orientation $\partial\phi$ on the spherical complex $\partial\Sigma$. 
	We will call $\partial\phi$ the \emph{orientation (at infinity) induced by $\phi$}.  
	In case $\phi$ is locally non-negative or non-trivial, then so is $\partial\phi$. 
\end{lemma}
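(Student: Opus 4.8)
The plan is to define $\partial\phi$ directly on the pairs $(\bar p, C)$ consisting of a panel $\bar p$ of the spherical complex $\partial\Sigma$ and a chamber $C$ of $\partial\Sigma$ containing $\bar p$, and then to check the three asserted properties (well-definedness/wall-consistency, and the inheritance of local nonnegativity and nontriviality). Recall from Section~\ref{sec:Coxeter} that the walls of $\partial\Sigma$ are exactly the parallel classes $[H]$ of hyperplanes of $\Sigma$, and the chambers of $\partial\Sigma$ are the parallel classes of Weyl chambers. So, given a spherical panel $\bar p$ it lies in a unique spherical wall, corresponding to a parallel class $[H]$; and a spherical chamber $C$ containing $\bar p$ determines, for this parallel class, one of the two half-space-families. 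Concretely: pick any hyperplane $H\in[H]$ and any alcove $c$ of $\Sigma$ that has a panel in $H$ and lies in the Weyl-chamber direction $C$ (such a $c$ exists — take a Weyl chamber representing $C$ far enough along and intersect it with a hyperplane parallel to $[H]$ it actually crosses, or more simply translate so that $0$ is a vertex of $H$ and use the Weyl chamber based there). Define $\partial\phi(\bar p, C) \define \phi(p,c)$ where $p$ is the panel of $c$ in $H$.

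First I would prove this is well defined. Two choices $(H,c)$ and $(H',c')$ compatible with the same pair $(\bar p, C)$ have $H\parallel H'$ and $c,c'$ on the ``same side'' in the sense that the corresponding half-spaces $H^\varepsilon \supset$ or $\subset H'^{\varepsilon'}$ are nested (since $C$ is a single direction, both alcoves sit on the side of their respective hyperplanes that points away from $C$, or toward it, consistently). By wall-consistency of $\phi$ the value $\phi(p,c)$ depends only on the half-space $H^\varepsilon$ containing $c$, not on $c$ itself; and by periodicity of $\phi$, nested half-spaces in a common parallel class are simultaneously $\phi$-positive or $\phi$-negative. Hence $\phi(p,c)=\phi(p',c')$, so $\partial\phi$ is well defined. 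Wall-consistency of $\partial\phi$ is then essentially immediate: if two spherical chambers $C,D$ share the side of a spherical wall $[H]$ and have panels in it, they correspond to nested/parallel half-spaces of a common hyperplane (after choosing representatives), so the same periodicity+wall-consistency argument gives equal values. Here I would also need the elementary geometric fact that two parallel Weyl chambers pointing ``the same way'' relative to $[H]$ really do land in $\phi$-equ-signed half-spaces — this is the one spot to be careful, and it is exactly the content of Lemma~3.6 of \cite{MST} which we are allowed to cite.

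For the inheritance of local nonnegativity/nontriviality: a spherical panel $\bar p$ has exactly two adjacent spherical chambers $C, C'$, corresponding (for a fixed hyperplane $H$ in the class) to the two half-spaces $H^+, H^-$ of $H$, via two alcoves $c\in H^+$, $c'\in H^-$ each with a panel in $H$. Then $\{\partial\phi(\bar p,C),\partial\phi(\bar p,C')\} = \{\phi(p,c),\phi(p',c')\}$, i.e. the two $\partial\phi$-signs at $\bar p$ are literally the two $\phi$-signs at the single $\Sigma$-panel $p$ relative to its two adjacent alcoves (using wall-consistency to identify the sign on each side with that at $p$). Therefore $\bar p$ has at least one (resp.\ exactly one) $\partial\phi$-positive side precisely when $p$ has at least one (resp.\ exactly one) $\phi$-positive side, so local nonnegativity and local nontriviality pass from $\phi$ to $\partial\phi$.

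The main obstacle I anticipate is purely bookkeeping: making precise the phrase ``$C$ and $c$ point the same way relative to $H$'' so that the well-definedness argument is airtight across all choices of parallel hyperplane in the class — one must ensure that translating the Weyl chamber representing $C$ to different base vertices, and choosing different hyperplanes $H$ in $[H]$, never flips the side. This is handled cleanly by invoking periodicity exactly as phrased in the definition (nested half-spaces in one parallel class have the same $\phi$-sign), together with the compatible identification of $\partial\Sigma$ with the Coxeter complex of $(\sW,\sS)$ fixed in Section~\ref{sec:Coxeter}; everything else is routine.
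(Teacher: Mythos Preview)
Your proposal is correct and follows essentially the same route as the paper's proof: define $\partial\phi$ by picking a representative hyperplane and adjacent alcove, then invoke wall-consistency together with periodicity to get well-definedness and wall-consistency of $\partial\phi$, and read off the local properties from a single affine panel. The only cosmetic difference is that the paper fixes the alcove to be the tip of a Weyl chamber $C_a$ representing the spherical chamber (so the ``correct side'' issue is automatic), whereas you allow an arbitrary alcove with a panel in $H$ on the $C$-side and therefore spell out the nested-half-space bookkeeping a bit more explicitly.
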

\begin{proof}
	Let $M$ be a wall in $\partial \Sigma$, that is a parallel class of walls in $\Sigma$, and let $a$ be a chamber in $\partial\Sigma$ having a panel $p$ in $M$. Then there exists a Weyl chamber $C_a$ in $\Sigma$ representing $a$ which has a bounding wall $H_M$ in the parallel class $M$. Denote by $c$ the tip of $C_a$, that is the alcove in $C_a$ which contains the cone point of the Weyl chamber $C_a$. Then $c$ is, by construction, an alcove in $\Sigma$ with a panel $q$ in $H_M$. Now we can put $\partial\phi(a,p)\define\phi(c,q)$. As $\phi$ is periodic this definition does not depend on the choice of $C_a$ and $\partial\phi$ is automatically wall consistent as well.   
	It is not hard to see that the properties  locally non-negative or non-trivial will also be satisfied for the induced orientation.  
\end{proof}

The converse is also true. 

\begin{lemma}[Induced affine orientations]\label{lem:affine inherited}
	For a given affine Coxeter complex $\Sigma$ let $\phi$ be a wall-consistent orientation of $\Delta\define \partial\Sigma$.
	Then there exists a unique periodic orientation $\widetilde\phi$ of $\Sigma$ such that $\partial\widetilde\phi=\phi$. 
	We will call $\widetilde\phi$ the \emph{(affine) orientation induced by $\phi$}. 
	In case $\phi$ is locally non-negative or non-trivial, then so is $\widetilde\phi$. 
\end{lemma}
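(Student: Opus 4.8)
The plan is to construct $\widetilde\phi$ explicitly from $\phi$ using the correspondence between hyperplanes of $\Sigma$ and their parallel classes (which are the walls of $\partial\Sigma$), then verify the required properties and uniqueness. Since $\phi$ is wall-consistent on $\partial\Sigma$, it designates for every wall $M$ of $\partial\Sigma$ a $\phi$-positive side (or possibly both sides, in the locally nonnegative non-nontrivial case); concretely $\phi$ determines, for each chamber $a$ at infinity with panel $p$ in $M$, a sign $\phi(a,p)$ depending only on which side of $M$ the chamber $a$ lies on. Given an affine hyperplane $H$ in $\Sigma$, let $M = M(H)$ be its parallel class, a wall of $\partial\Sigma$. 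Every (open) half-space $H^\varepsilon$ of $H$ contains a unique parallel class of Weyl chambers whose ``direction'' lies on a well-defined side of $M$; declare $H^\varepsilon$ to be $\widetilde\phi$-positive precisely when that side of $M$ is $\phi$-positive. For a panel $q$ of an alcove $c$ in $\Sigma$, set $\widetilde\phi(c,q) = +1$ iff $c$ lies in a $\widetilde\phi$-positive half-space of the wall spanned by $q$. This is manifestly wall-consistent (it was defined half-space by half-space) and, by construction, periodic: if $H_1^{\varepsilon_1}\subset H_2^{\varepsilon_2}$ then $H_1$ and $H_2$ are parallel, $M(H_1)=M(H_2)=M$, and the two half-spaces point to the same side of $M$, so they receive the same sign.

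Next I would check $\partial\widetilde\phi = \phi$. Unwinding the definition in Lemma~\ref{lem:spherical inherited}: given a chamber $a$ at infinity with panel $p$ in a wall $M$ of $\partial\Sigma$, one picks a Weyl chamber $C_a$ representing $a$ with a bounding wall $H_M$ in the class $M$, takes its tip alcove $c$ with panel $q$ in $H_M$, and sets $\partial\widetilde\phi(a,p) = \widetilde\phi(c,q)$. Now $\widetilde\phi(c,q)$ is $+1$ iff $c$ lies in the $\widetilde\phi$-positive half-space of $H_M$, which by definition of $\widetilde\phi$ happens iff the side of $M$ toward which $C_a$ opens is the $\phi$-positive side — and that side is exactly the side on which $a$ sits, so $\widetilde\phi(c,q) = \phi(a,p)$. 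Hence $\partial\widetilde\phi = \phi$.

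For uniqueness, suppose $\psi$ is another periodic orientation with $\partial\psi = \phi$. By wall-consistency it suffices to show $\psi$ and $\widetilde\phi$ assign the same positive sides to every affine hyperplane $H$. Fix $H$ and a half-space $H^\varepsilon$. Choose any Weyl chamber $C$ whose bounding wall $H'$ lies in the parallel class $M(H)$ and which opens to the side of $M(H)$ corresponding to $H^\varepsilon$; let $a$ be its class at infinity and $p$ its panel in $M(H)$. Then $\partial\psi(a,p) = \phi(a,p) = \partial\widetilde\phi(a,p)$, i.e. $\psi$ and $\widetilde\phi$ agree on the half-space of $H'$ containing the tip of $C$. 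Now $H^\varepsilon$ and that half-space of $H'$ are nested (one inside the other, possibly after swapping $H,H'$), both pointing to the same side of $M(H)$; by periodicity of $\psi$ the sign on $H^\varepsilon$ equals the sign on the half-space of $H'$, and likewise for $\widetilde\phi$ (which is periodic by the previous paragraph). Therefore $\psi$ and $\widetilde\phi$ agree on $H^\varepsilon$, proving $\psi = \widetilde\phi$.

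Finally, the local properties transfer directly. If $\phi$ is locally nonnegative, then for each wall $M$ of $\partial\Sigma$ at least one side is $\phi$-positive, hence for each affine hyperplane $H$ at least one half-space is $\widetilde\phi$-positive, so $\widetilde\phi$ is locally nonnegative; the nontrivial case (exactly one positive side) is identical with ``at least one'' replaced by ``exactly one''. The main obstacle I anticipate is purely bookkeeping: setting up cleanly the bijection between half-spaces of an affine hyperplane $H$ and sides of its parallel class $M(H)$ in $\partial\Sigma$, and being careful that ``the side of $M$ toward which a Weyl chamber opens'' is well-defined independently of the representative — this is where periodicity is doing the real work, and it is the same point already handled in Lemma~\ref{lem:spherical inherited}, so I would lean on that compatibility rather than re-proving it.
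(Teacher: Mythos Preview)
Your proposal is correct and follows exactly the same approach as the paper: define $\widetilde\phi$ by declaring a half-space $H^\varepsilon$ positive iff its boundary $\partial H^\varepsilon$ is a $\phi$-positive side of $\partial H$ in $\Delta$. The paper's proof is a two-line version of your construction that leaves periodicity, the equality $\partial\widetilde\phi=\phi$, uniqueness, and the transfer of local properties implicit, whereas you spell all of these out carefully (note only that the paper's convention is $\phi(p,c)$ with the panel first).
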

\begin{proof}
	For a hyperplane $H$ in $\Sigma$ we  choose a side $H^\varepsilon$ to be positive, respectively negative, if $\partial H^\varepsilon$ is a positive, respectively negative, side of the hyperplane $\partial H$ in $\Delta$. This uniquely determines  $\widetilde\phi$. 
\end{proof}

Alcove orientations on a spherical Coxeter complex $\Delta$ are wall consistent and locally non-trivial by Lemma~\ref{lem:basic properties orientations}. Hence they induce orientations on affine Coxeter complexes with $\Delta$ as their boundary by Lemma~\ref{lem:affine inherited}. One can view these as orientations on an affine $\Sigma$ determined by alcoves in the boundary $\partial\Sigma=\Delta$. We summarize this special case of induced affine orientations in the following definition. 

\begin{definition}[Weyl chamber orientations]\label{def:Weyl chamber orientation}
	Suppose $\Sigma$ is an affine Coxeter complex with boundary $\Delta$ and let $\sigma \in \Delta$ be some chamber. 
	Then the \emph{Weyl chamber orientation with respect to $\sigma$} (or short the \emph{$\sigma$--orientation}) is the orientation $\widetilde\phi_\sigma$ on $\Sigma$ induced by the $\sigma$-simplex orientation $\phi_\sigma$. 
\end{definition}

\begin{remark}[Alternative description of Weyl chamber orientations] Note that one can also describe the Weyl chamber  orientation as follows. For any alcove $c$ and any panel $p$ in $c$, let $H$ be the affine wall containing $p$. The chamber $\sigma$ corresponds to an equivalence class of Weyl chambers in $\Sigma$. We may hence define $\phi_u(p,c)$ to be $+1$ if $\sigma$ has a representative $C_\sigma$ which lies on the same side of $H$ as $c$.  This is the viewpoint we had taken in \cite{MST}.
\end{remark}

\begin{remark}[More induced orientations]
	Links in a Coxeter complex are again Coxeter complexes. One can show that they inherit orientations from the orientations on the ambient space. We will not need this concept in the present paper and hence will not formally introduce it.
\end{remark}


%
%
\section{Folded galleries}\label{sec:folded galleries}
In this section we introduce positively folded galleries, discuss some of their properties as well as possible ways to construct other positively folded galleries from a given one.    We essentially follow the terminology of \cite{MST} which is slightly different from the one in \cite{GaussentLittelmann}, where the concept of a folded gallery was, to our knowledge, introduced first.


\subsection{General notions}

We start with the definition of a combinatorial alcove--to--alcove gallery. 

\begin{definition}[Combinatorial galleries]\label{def:CombGallery}
	A \emph{(combinatorial) gallery} in a Coxeter complex $\Sigma=\Sigma(W,S)$ is a sequence 
	\[
	\gamma=(c_0, p_1, c_1, p_2,  \dots , p_n, c_n), 
	\]
	of alcoves $c_i$ and panels $p_i$ where for all $i=1, \ldots, n$ the panel $p_i$ is contained in both $c_i$ and $c_{i-1}$.  
	The \emph{length} of $\gamma$ is defined to be $n+1$. We say that $\gamma$ is \emph{minimal} if there is no shorter gallery connecting the \emph{source} $c_0$ with the \emph{sink} $c_n$.      
\end{definition}

All of our combinatorial galleries will contain at least one alcove. It is easy to see that if $c_i \neq c_{i-1}$ there is no choice for the panel $p_i$. As combinatorial galleries are the only ones we work with in this paper we will skip the word `combinatorial' in most places. 

\begin{remark}[Other classes of galleries]
	Note that it also makes sense to define vertex-to-vertex,  vertex-to-alcove or simplex-to-simplex galleries. The differences in their behavior are quite subtle. Compare for example~\cite[{Section 3.2}]{MST} in particular Remark 3.13 there. 
	In addition one can allow for more general steps in the gallery, i.e. replace the alcoves $c_i$ in our definition by smaller dimensional simplices as done in \cite{GaussentLittelmann}. Again, the properties they have might differ from the ones discussed here and it is often quite technical to keep track of their differences. However, depending on the context it might be necessary to switch to a different class and/or study the relationships between two classes.  
\end{remark}	

\begin{definition}[Folds] A gallery $\gamma$ is said to be \emph{folded (or stammering)} if there is some $i$ such that $c_i = c_{i-1}$, and \emph{unfolded} (or non-stammering) otherwise. If for some $i$ the alcove $c_i=c_{i-1}$ we say $\gamma$ has a \emph{fold} at panel $p_i$ or position $i$. The set $\folds(\gamma)$ of folds in $\gamma$ is the set of all $1\leq i\leq n$ such that $\gamma$ has a fold at panel $p_i$.  
\end{definition}

\begin{example}[Illustrating (folded) galleries]\label{ex:folded galleries}
	When drawing pictures we typically illustrate a (folded) gallery by a continuous path in the Coxeter complex that walks through the chambers and panels in the gallery. The arrow points towards the sink of the gallery. A bend touching a panel of an alcove illustrates a fold at the respective panel and shows that the alcove is repeated in the gallery.\newline
	In Figure~\ref{fig:folded galleries} we show two galleries in a type $\tilde{A}_2$ Coxeter complex. The grey gallery walks from $a$ to $c$, is not folded and not minimal. The black gallery has source $a$ and sink $b$. The first bit of the gallery (up to panel $p_4$) agrees with the grey one. The black gallery has two folds at panels $p_4$ and $p_7$. 
\end{example}

\begin{figure}[hb]
	\begin{overpic}[width=0.5\textwidth]{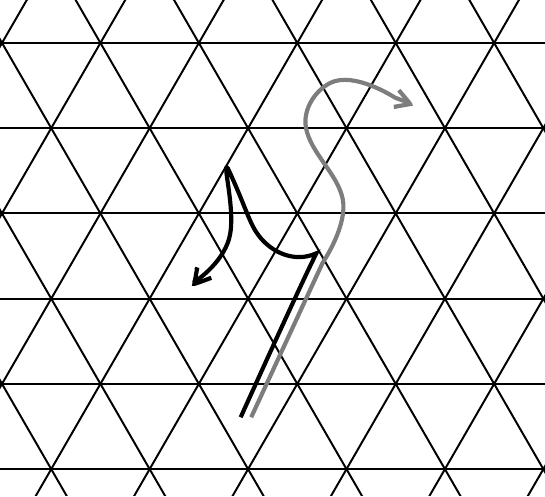}
		\put(45,10){$a$}
		\put(32,38){$b$}
		\put(68,70){$c$}
		\put(40,20){$p_1$}
		\put(45,30){$p_2$}
		\put(58,34){$p_3$}
		\put(62,40){$p_4$}
		\put(40,62){$p_7$}
	\end{overpic}
	
	\caption{This figure shows galleries in type $\tilde A_2$  with two folds (black) and no folds (gray). }	
	\label{fig:folded galleries}
\end{figure}

Taking orientations into account we can introduce the notion of a positively folded gallery. 

\begin{definition}[Positively folded galleries]\label{def:positively folded}
	A gallery $\gamma$  is \emph{positively} (respectively, \emph{negatively}) \emph{folded} with respect to an orientation $\phi$ if for all $1 \leq i \leq n$ either $c_{i-1}\neq c_i$, or $c_i=c_{i-1}$ and $\phi(p_i, c_i)=+1$. (respectively $-1$). 
\end{definition}

In other words, a gallery $\gamma$ has a positive fold at $c_i=c_{i-1}$ if the alcove $c_i$ is on the positive side of $p_i$. Analogously for negative folds the repeated alcove is on the negative side of the panel.    

\begin{remark}[Negative folds and opposite orientations]
	We will only be considering positively folded galleries as if some $\gamma$ is negatively folded with respect to an orientation $\phi$ then it is positively folded with respect to the \emph{opposite orientation} $-\phi$ defined by $-\phi(p,c) \define (-1)\cdot\phi(p,c)$.   
\end{remark}

Using the types of panels in a Coxeter complex we may associate a word to a combinatorial gallery.

\subsection{Galleries and words}

Fix a Coxeter system $(W,S)$ with Coxeter complex $\Sigma$. In this subsection we discuss the close relationship of galleries in $\Sigma$ and (decorated) words in $S$. 
By \emph{decorated words} we mean words in $S$ where we put hats on some of its letters. To make the wording easier words with no hats are also considered decorated words. If there are no hats on a (decorated) word we may also call it \emph{undecorated}.

\begin{definition}[Type of a gallery]\label{def:gallery type}
	Let $\gamma=(c_0,  p_1,  c_1, \dots, p_n, c_n)$ be a gallery. Its \emph{type}, denoted by $\type(\gamma)$, is the word in $S$ obtained as follows:
	\[
	\type(\gamma)\define s_{j_1}s_{j_2}\dots s_{j_n},
	\]
	where for $1 \leq i \leq n$ the panel $p_i$ of $\gamma$ has type $s_{j_i} \in S$.  
	We write $\Gamma_\phi^+(w)$ for the set of $\phi$-positively folded galleries of type $w$. 
	
	The \emph{decorated type}, denoted by $\dtype(\gamma)$ is the (decorated) word in $S$ obtained as follows:
	\[
	\dtype(\gamma)\define s_{j_1}\dots \hat{s}_{j_2} \dots s_{j_n},
	\]
	where the $s_{j_i} \in S$ are chosen as above and a hat is put on $s_{j_i}$ in case $c_{i-1}=c_i$ in $\gamma$.  
	By slight abuse of notation we call a letter with a hat a \emph{fold} of $\gamma$. 
	We write $\Gamma_\phi^+(\hat{w})$ for the set of positively folded galleries of decorated type $\hat w$. 
\end{definition}

\begin{lemma}[Galleries and words]\label{lem:words}
	Fix an alcove $c_0$ in a Coxeter complex $\Sigma=\Sigma(W,S)$. Then the following hold. 
	\begin{enumerate}[label=(\roman*)]
		\item\label{item:words} Words in $S$ are in bijection with the unfolded galleries with source $c_0$. 
		\item\label{item:decwords} The decorated words in $S$ are in bijection with the set of all galleries with source $c_0$ via $\dtype$. 
	\end{enumerate}	
\end{lemma}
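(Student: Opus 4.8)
The plan is to establish both bijections by constructing explicit inverse maps, using the fact that a gallery with a fixed source is determined step by step once we know which panel to cross at each stage and whether or not we actually move.

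\textbf{Item \ref{item:words}.} First I would make the map precise: given a word $w = s_{j_1}\cdots s_{j_n}$ in $S$ and the fixed source $c_0$, I build the unique unfolded gallery $\gamma(w)$ as follows. Having constructed $c_{i-1}$, there is a unique panel $p_i$ of $c_{i-1}$ of type $s_{j_i}$ (panels of an alcove are in bijection with $S$ via their types), and since the gallery is to be unfolded there is a unique alcove $c_i \neq c_{i-1}$ containing $p_i$; set this to be $c_i$. This produces a well-defined unfolded gallery with source $c_0$ and, by construction, $\type(\gamma(w)) = w$. Conversely, $\type$ sends an unfolded gallery with source $c_0$ to a word in $S$, and the two constructions are mutually inverse: applying $\type$ to $\gamma(w)$ returns $w$ by construction, and starting from an unfolded gallery $\gamma = (c_0, p_1, \dots, p_n, c_n)$, the word $\type(\gamma)$ feeds back into the construction the same panels $p_i$ (each $p_i$ is the unique type-$s_{j_i}$ panel of $c_{i-1}$) and hence, by unfoldedness and the uniqueness of the alcove across a panel distinct from a given one, the same alcoves $c_i$. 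So $\type$ is a bijection on the nose.

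\textbf{Item \ref{item:decwords}.} The argument is the same but with one extra bit of data per step. Given a decorated word $\hat w$ with underlying letters $s_{j_1}\cdots s_{j_n}$ and a hat recorded at a subset $F \subseteq \{1,\dots,n\}$ of positions, I construct $\gamma(\hat w)$ inductively: at step $i$, take $p_i$ to be the unique type-$s_{j_i}$ panel of $c_{i-1}$ as before, and then set $c_i := c_{i-1}$ if $i \in F$ (a fold), and $c_i :=$ the unique alcove across $p_i$ distinct from $c_{i-1}$ if $i \notin F$. This is a well-defined gallery with source $c_0$ whose decorated type is $\hat w$, since the hat positions of $\dtype(\gamma(\hat w))$ are exactly the $i$ with $c_i = c_{i-1}$, which are exactly the elements of $F$. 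Conversely $\dtype$ takes any gallery with source $c_0$ to a decorated word, and one checks as above that the two maps are inverse to each other: the underlying letters are recovered from the panel types, and the hats are recovered precisely from the indices where the gallery stammers. The point noted in the text right after Definition~\ref{def:CombGallery} — that when $c_i \neq c_{i-1}$ the panel $p_i$ is forced — is what guarantees there is no lost information and the correspondence is a genuine bijection rather than just a surjection.

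\textbf{Main obstacle.} There is no serious obstacle here; the statement is essentially a bookkeeping lemma. The only points needing a word of care are (a) the standard fact that the panels of a fixed alcove are in bijection with $S$ via the type map, so that ``the unique panel of $c_{i-1}$ of type $s$'' makes sense, and (b) the fact that a panel lies in exactly two alcoves, so that ``the alcove across $p_i$ other than $c_{i-1}$'' is well-defined — this is where the thinness of the Coxeter complex is used. Both are standard and can be cited from \cite{BjoernerBrenti} or \cite{Humphreys}. I would keep the write-up short, spelling out the forward construction in each case and then remarking that the mutual-inverse check is immediate from uniqueness at each step.
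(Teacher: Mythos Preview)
Your proposal is correct and follows essentially the same approach as the paper: both construct the inverse to $\type$ (resp.\ $\dtype$) step by step from a word (resp.\ decorated word), using that at each stage the next panel and alcove are uniquely determined. The only cosmetic difference is that the paper phrases the step as ``$c_i$ is the $s_{j_i}$-neighbor of $c_{i-1}$'' (i.e.\ right-multiplication by $s_{j_i}$) while you phrase it via thinness of the Coxeter complex; these are equivalent, and your write-up is if anything more carefully spelled out than the paper's.
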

\begin{proof}
	Note that in an unfolded gallery the alcove $c_i$ is obtained from $c_{i-1}$ by right-multiplication with the generator $s_{j_i}$. This implies \ref{item:words}.
	The fact that minimality is equivalent to the type being reduced was for example shown as Proposition 4.41 in \cite{AbramenkoBrown}. 
	
	To go from a decorated word $s_{j_1}\dots \hat{s}_{j_2} \dots s_{j_n}$ to a gallery define $c_i$ to be the $s_{j_i}$-neighbor of $c_{i-1}$ if there is no hat on $s_{j_i}$. In this case put $p_i\define c_i\cap c_{i-1}$. If there is a hat on $s_{j_i}$ put $c_i=c_{i-1}$ and choose as $p_i$ the unique panel of $c_i$ of type $s_{j_i}$. Hence item \ref{item:decwords}.
\end{proof} 

\begin{lemma}[Properties of galleries]
	For all galleries $\gamma$ the following hold.  
	\begin{enumerate}[label=(\roman*)]
		\item $\folds(\gamma)=\emptyset$ if and only if $\type(\gamma)=\dtype(\gamma)$.  
		\item $\gamma$ is minimal if and only if $\type(\gamma)$ is reduced and $\folds(\gamma)=\emptyset$.
	\end{enumerate}	
\end{lemma}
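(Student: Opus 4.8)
The plan is to prove each equivalence by unwinding the definitions, using Lemma~\ref{lem:words} and the basic observation that for a gallery $\gamma=(c_0,p_1,c_1,\dots,p_n,c_n)$ one has a fold at position $i$ precisely when $c_{i-1}=c_i$. For item (i), I would argue as follows. Recall from Definition~\ref{def:gallery type} that $\type(\gamma)=s_{j_1}\cdots s_{j_n}$ records the types of the panels, while $\dtype(\gamma)$ is the same word except that a hat is placed on $s_{j_i}$ exactly when $c_{i-1}=c_i$. Hence $\type(\gamma)$ and $\dtype(\gamma)$ are words in the same letters in the same order, and they coincide as \emph{decorated} words if and only if no hat occurs, i.e. if and only if there is no $i$ with $c_{i-1}=c_i$. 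By the definition of $\folds(\gamma)$ (the set of $1\le i\le n$ with a fold at $p_i$), this says exactly $\folds(\gamma)=\emptyset$. This direction is essentially immediate and requires no real obstacle.

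For item (ii), one implication is supplied directly by the last sentence of the proof of Lemma~\ref{lem:words}: an unfolded gallery with source $c_0$ corresponds to the plain word $\type(\gamma)$, and by Proposition~4.41 of \cite{AbramenkoBrown} (quoted there) such a gallery is minimal precisely when its type is a reduced word. So if $\type(\gamma)$ is reduced and $\folds(\gamma)=\emptyset$, then $\gamma$ is unfolded by item (i), hence corresponds to a reduced word, hence is minimal. Conversely, suppose $\gamma$ is minimal. First I would rule out folds: if $\gamma$ had a fold at position $i$, then deleting $p_i$ and $c_i$ yields a strictly shorter gallery with the same source and sink (since $c_i=c_{i-1}$), contradicting minimality; thus $\folds(\gamma)=\emptyset$. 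Now $\gamma$ is unfolded, so by Lemma~\ref{lem:words}\ref{item:words} it corresponds to the word $\type(\gamma)$, and the length of $\gamma$ equals (one plus) the length of that word; minimality of $\gamma$ among source-to-sink galleries then forces $\type(\gamma)$ to realize the minimal possible such length, which by the cited Proposition~4.41 means $\type(\gamma)$ is reduced.

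The only point needing a little care — and the closest thing to an obstacle — is being precise about what "minimal" refers to: Definition~\ref{def:CombGallery} defines $\gamma$ to be minimal when there is no shorter gallery with the same source and sink, so I must make sure both steps above compare lengths among \emph{all} galleries between $c_0$ and $c_n$ (folded ones included), not merely among unfolded ones. The fold-deletion argument handles the folded competitors directly, and once we know a minimal gallery is unfolded, the comparison reduces to the classical fact about reduced words quoted from \cite{AbramenkoBrown}. I would present items (i) and (ii) in that order, phrasing the proof of (ii) so that it explicitly invokes (i) for the "no folds" half and the cited proposition for the "reduced type" half.
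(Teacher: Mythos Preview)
Your proposal is correct. Note that the paper actually states this lemma without proof: it is treated as immediate from the definitions, and the only nontrivial ingredient (that an unfolded gallery is minimal if and only if its type is reduced) is already cited as Proposition~4.41 of \cite{AbramenkoBrown} in the proof of the preceding Lemma~\ref{lem:words}. Your write-up simply spells out what the paper leaves implicit---item~(i) directly from the definition of $\dtype$, and item~(ii) by combining the fold-deletion argument with the cited proposition---so there is no meaningful divergence in approach.
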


The notion of a footprint, defined below,  will allow us to characterize end-alcoves, i.e. sinks, of folded galleries. 

\begin{definition}[Footprint of a gallery]
	Let $\gamma =(c_0, p_1, c_1, \dots, p_n, c_n)$ be a combinatorial gallery of decorated type 
	$\dtype(\gamma)\define s_{j_1}\dots \hat{s}_{j_2} \dots s_{j_n}$. 
	The \emph{footprint} $\foot(\gamma)$ of $\gamma$ is the gallery obtained by deleting all the pairs $p_i, c_i$ for which the letter $s_i$ in $\dtype(\gamma)$ carries a hat.    	
\end{definition}

\begin{example}[Footprints]\label{ex:footprint}
	Note that the footprint of a given folded gallery $\gamma$ is shorter than $\gamma$ and unfolded (by construction) but need not be minimal. On the right hand side of Figure~\ref{fig:footprint} the black gallery with source $a$ and sink $d$ has as its footprint the minimal dashed grey gallery from $a$ to $d$. Here the panel $p_4$ and the chamber adjacent to it got deleted. The black gallery with source $a$ and sink $b$ on the left has a non-minimal footprint.  Both unfolded galleries, shown dotted, are minimal with source $a$ and sink $e$, respectively $c$. 
\end{example}

\begin{figure}[hb]
	\begin{overpic}[width=0.45\textwidth]{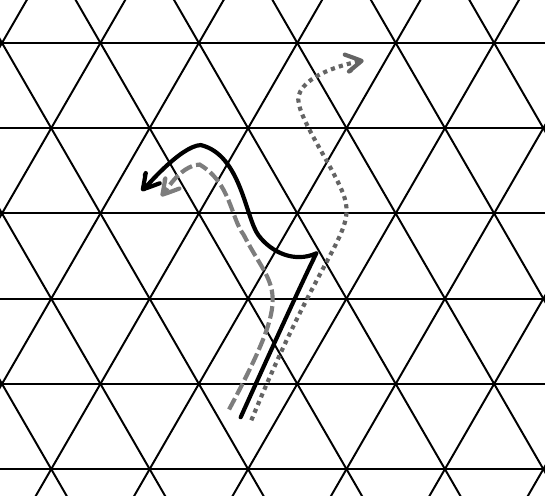}
		\put(44,10){$a$}
		\put(22,52){$b$}
		\put(62,72){$c$}
		\put(60,40){$p_4$}
	\end{overpic}
	\begin{overpic}[width=0.45\textwidth]{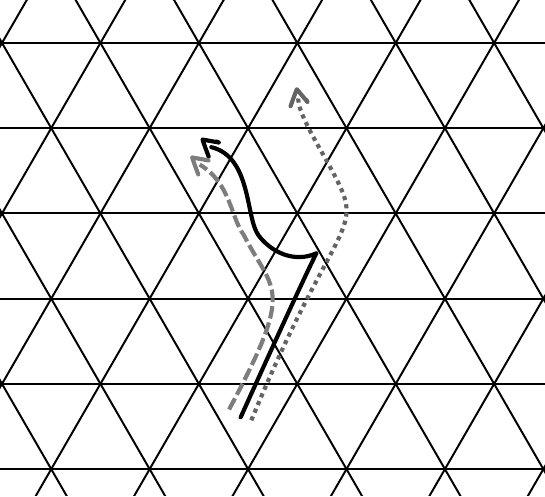}
		\put(44,10){$a$}
		\put(32,62){$d$}
		\put(52,75){$e$}
		\put(60,40){$p_4$}
	\end{overpic}
	
	\caption{This figure shows galleries (black), their unfolded images (dotted grey) and footprints (dashed grey). }	
	\label{fig:footprint}
\end{figure}

From the right-action of the Coxeter group $W$ on $\Sigma$ one obtains that the type of the footprint is a word such that the element it defines corresponds to the end-alcove of a folded gallery. 

\begin{lemma}[Footprint and end-alcoves]\label{lem:footprint final alcove}
	The final alcove of any combinatorial gallery $\gamma =(c_0, p_1, c_1, \dots, p_n, c_n)$ can be computed using the type of its footprint, namely $c_n=c_0\cdot w, \text{ where }w=\type(\foot(\gamma)).$ 
\end{lemma}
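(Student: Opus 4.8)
The plan is to induct on the length $n$ of the gallery $\gamma$. The statement is clear for $n=0$, since then $\gamma=(c_0)$ has empty footprint, $\type(\foot(\gamma))$ is the empty word, and $c_n=c_0=c_0\cdot\id$. For the inductive step, write $\gamma=(c_0,p_1,c_1,\dots,p_n,c_n)$ and let $\gamma'=(c_0,p_1,c_1,\dots,p_{n-1},c_{n-1})$ be its truncation, to which the inductive hypothesis applies: $c_{n-1}=c_0\cdot w'$ where $w'=\type(\foot(\gamma'))$.

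I distinguish two cases according to whether $\gamma$ has a fold at position $n$. If $c_n=c_{n-1}$, i.e.\ the last letter $s_{j_n}$ of $\dtype(\gamma)$ carries a hat, then by the definition of the footprint the pair $p_n,c_n$ is deleted, so $\foot(\gamma)=\foot(\gamma')$ and hence $\type(\foot(\gamma))=w'$; since also $c_n=c_{n-1}$, the claim $c_n=c_0\cdot w'$ follows immediately. If instead $c_n\neq c_{n-1}$, then $s_{j_n}$ carries no hat, $\foot(\gamma)$ is obtained from $\foot(\gamma')$ by appending the panel $p_n$ and alcove $c_n$, and therefore $\type(\foot(\gamma))=w' s_{j_n}$. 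It remains to show $c_n=c_{n-1}\cdot s_{j_n}=(c_0\cdot w')\cdot s_{j_n}=c_0\cdot(w' s_{j_n})$.

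The one genuine point to nail down is the local claim that when $c_n\neq c_{n-1}$ and the shared panel $p_n$ has type $s_{j_n}$, one has $c_n = c_{n-1}\cdot s_{j_n}$ under the chosen identification of $W$ with the alcoves of $\Sigma$. This is exactly the observation already recorded in the proof of Lemma~\ref{lem:words}\ref{item:words}, namely that in an unfolded step the new alcove is the $s_{j_i}$-neighbor of the old one, which in turn is right-multiplication by $s_{j_i}$: two alcoves share a panel of type $s$ precisely when, writing the first as $c_0\cdot x$, the second is $c_0\cdot xs$, because panels of type $s$ correspond to cosets $xW_{\{s\}}$ and the two alcoves in such a coset are $x$ and $xs$. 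So the main (mild) obstacle is purely bookkeeping: making sure the right-action convention on $\Sigma$ and the type convention on panels are compatible, which is built into the setup in Section~\ref{sec:Coxeter groups} and into Lemma~\ref{lem:words}. Once that local step is invoked, the induction closes and the lemma follows.
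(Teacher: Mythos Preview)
Your proof is correct and follows essentially the same approach as the paper: both arguments rest on the observation from Lemma~\ref{lem:words}\ref{item:words} that in an unfolded step the next alcove is obtained by right-multiplication with the type of the shared panel. The only difference is presentational---you induct on the length of $\gamma$ and split into fold/non-fold cases at the last step, whereas the paper passes directly to the footprint and applies this observation to each step of that (unfolded) gallery.
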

\begin{proof}
	In the footprint of a gallery all the folds are deleted. That is, in the footprint $\foot(\gamma) =(c_0, q_1, d_1, \dots, q_m, d_m)$, where $m=n-\#\folds(\gamma)$, every alcove $d_i$ is obtained from $d_{i-1}$ via right-multiplication with $s_i\define{\type(q_i)}$. Hence the claim of the lemma.  
\end{proof}

\subsection{Modification of galleries}\label{sec:modifications}

There are several ways to manipulate a positively folded gallery. In \cite{MST} we have made crucial use of the Littelmann root operators from \cite{Littelmann1} which were defined for galleries in \cite{GaussentLittelmann}. In Sections 6, 8.1, 8.3 and 9 of \cite{MST} we moreover introduced several methods to explicitly construct and manipulate galleries via extensions, conjugation or  concatenation. Ram \cite{Ram} as well as Parkinson, Ram and C. Schwer \cite{PRS} also discussed concatenations of folded galleries dressed as alcove walks. Kapovich and Millson studied the closely related Hecke paths and ways to construct them in \cite{KM}. 

In this subsection we discuss two kinds of manipulations of galleries: the natural action of $W$ and explicit folding and unfolding. In addition we introduce an equivalence relation on folded galleries induced by braid moves on the type. 

\begin{notation}[W-action on galleries] 
	It is clear from the definition of galleries and from the natural left-action of $W$ on $\Sigma$ that the Coxeter group $W$ also acts from the left on the set of all galleries in $\Sigma$. Write $x.\gamma$ for the image $(x.c_0, x.p_1, x.c_1, \dots, x.p_n, x.c_n)$ of $\gamma=(c_0, p_1, c_1, \dots, p_n, c_n)$ under $x\in W$. 
\end{notation}

Let us record a key property of this action in the following lemma.     

\begin{lemma}[$W$-action on positively folded galleries]\label{lem:left-action} 
	Let $(W,S)$ be an affine Coxeter system with Coxeter complex $\Sigma$ and choose a chamber $a$ in $\partial\Sigma$. A gallery $\gamma$ is $\phi_a$--positively folded  if and only if $x.\gamma$ is $\phi_{x.a}$ positively folded. 
	Here $x.a$ is the equivalence class of the Weyl chamber $x.C_a$ for any representative $C_a \subset \Sigma$ of $a$.  
\end{lemma}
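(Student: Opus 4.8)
The statement is essentially a compatibility between the $W$-action on galleries and the $W$-action on orientations, so the plan is to reduce it to the observation that $\phi_{x.a} = x\cdot\phi_a$ and then unwind definitions. First I would check the claim at the level of the spherical simplex orientations: since the simplex orientation $\phi_\sigma$ depends only on which side of each wall the simplex $\sigma$ lies on, applying $x$ to everything (walls, panels, chambers, and $\sigma$ simultaneously) preserves the relation ``$c$ and $\sigma$ lie on the same side of $H$''. Hence $\phi_{x.\sigma} = x\cdot\phi_\sigma$ as orientations of $\partial\Sigma$, where the right-hand side is the $W$-action $(x\cdot\phi)(p,c) = \phi(x^{-1}p, x^{-1}c)$. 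Then, because the passage from a wall-consistent spherical orientation to its induced affine orientation in Lemma~\ref{lem:affine inherited} is itself $W$-equivariant (it is characterized by $\partial\widetilde\phi = \phi$, and $\partial(x\cdot\widetilde\phi) = x\cdot(\partial\widetilde\phi)$ since taking boundaries commutes with the $W$-action on Weyl chambers), I get $\widetilde\phi_{x.a} = x\cdot\widetilde\phi_a$ for the Weyl chamber orientations as well.

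With that identity in hand, the rest is a direct verification. Write $\gamma=(c_0,p_1,c_1,\dots,p_n,c_n)$ and $x.\gamma=(x.c_0,x.p_1,\dots,x.p_n,x.c_n)$. Fold positions are preserved: $c_i = c_{i-1}$ if and only if $x.c_i = x.c_{i-1}$, since $x$ acts by a bijection on alcoves. So $\gamma$ and $x.\gamma$ have folds at exactly the same positions, and at such a position $i$ one computes, using $\widetilde\phi_{x.a} = x\cdot\widetilde\phi_a$,
\[
\widetilde\phi_{x.a}(x.p_i, x.c_i) = \widetilde\phi_a(x^{-1}x.p_i, x^{-1}x.c_i) = \widetilde\phi_a(p_i, c_i).
\]
Thus $\widetilde\phi_{x.a}(x.p_i,x.c_i)=+1$ precisely when $\widetilde\phi_a(p_i,c_i)=+1$, which by Definition~\ref{def:positively folded} means $x.\gamma$ is $\phi_{x.a}$-positively folded if and only if $\gamma$ is $\phi_a$-positively folded. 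The equivalence is symmetric (replace $x$ by $x^{-1}$, or simply read the biconditional both ways), so no separate converse argument is needed.

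I expect the only genuinely delicate point to be the $W$-equivariance of the spherical-to-affine induction, i.e.\ that $\widetilde{x\cdot\phi} = x\cdot\widetilde\phi$. This rests on the fact that the $W$-action on $\Sigma$ sends Weyl chambers to Weyl chambers and commutes with taking parallel classes, so that for a half-space $H^\varepsilon$ of an affine wall one has $\partial(x.H^\varepsilon) = x.(\partial H^\varepsilon)$; combined with the uniqueness clause of Lemma~\ref{lem:affine inherited} this forces the two periodic orientations to agree. Everything else is bookkeeping with the already-established Lemmas~\ref{lem:affine inherited} and the definitions of the $W$-action on galleries and on orientations.
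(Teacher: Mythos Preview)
Your proposal is correct and follows essentially the same approach as the paper. The paper's proof is a two-line sketch: the $W$-action by isometries preserves fold positions, and one checks directly that an alcove $c$ is on the $\phi_a$-positive side of a hyperplane $H$ if and only if $x.c$ is on the $\phi_{x.a}$-positive side of $x.H$. Your argument formalizes this second point as the identity $\widetilde\phi_{x.a} = x\cdot\widetilde\phi_a$ and derives it via the $W$-equivariance of the spherical-to-affine induction together with the uniqueness clause of Lemma~\ref{lem:affine inherited}; this is a more elaborate route to the same geometric fact, but the substance is identical.
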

\begin{proof}
	The group $W$ acts by isometries on $\Sigma$. This implies that galleries are mapped to galleries and that the action preserves the number and positions of folds. To see the rest check that an alcove $c$ is on the $\phi_a$-positive side of a hyperplane $H$ if and only if $x.c$ is on the $\phi_{x.a}$-positive side of $x.H$. 
\end{proof}

We will now introduce explicit foldings of galleries along panels. 

\begin{definition}[(Un-)foldings of galleries]\label{def:folds}
	Let $\gamma=(c_0, p_1, c_1, \dots, p_n, c_n)$ be a gallery and write $r_i$ for the reflection across the hyperplane $H_i$ containing the panel $p_i$. 
	For any $i\in\{1,2,\ldots,n\}$ define 
	\[
	\gamma^i\define(c_0, p_1, c_1, \dots, p_i, r_i c_i, r_i p_{i+1}, r_i c_{i+1}, \dots,  r_i p_n, r_i c_n).  
	\] 
	We call $\gamma^i$ a \emph{(un-)folding} of $\gamma$ at panel $i$, depending on whether $\gamma$ was  folded or not at $i$. 
\end{definition}

The next lemma follows from the fact that reflections are type preserving. 

\begin{lemma}[Elementary properties of folds]\label{lem:properties of folds} 
	Every (un-)folding $\gamma^i$ of a gallery $\gamma$ is again a gallery of the same type as $\gamma$, that is $\type(\gamma)=\type(\gamma^i)$. The number of folds decreases by one for an unfolding and increases by one for a folding. Moreover, $(\gamma^i)^i=\gamma$.  
\end{lemma}	

\begin{lemma}[Folds commute]\label{lem:folds commute}
	Let $\gamma=(c_0, p_1, c_1, \dots, p_n, c_n)$ be a gallery and $1\leq i,j \leq n$. Then $(\gamma^i)^j=(\gamma^j)^i$. 
\end{lemma}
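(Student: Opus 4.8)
The plan is to unwind the definition of $\gamma^i$ and $\gamma^j$ from Definition~\ref{def:folds} in the case $i < j$ (the case $i = j$ is immediate from $(\gamma^i)^i = \gamma$ in Lemma~\ref{lem:properties of folds}, and $i > j$ follows by symmetry of the statement), and then simply compare the two resulting sequences entry by entry. First I would fix notation: write $r_k$ for the reflection across the hyperplane $H_k \supseteq p_k$ in the \emph{original} gallery $\gamma$. The key observation is that when we form $\gamma^i$, the panels $p_k$ for $k > i$ get replaced by $r_i p_k$, so the hyperplane through the new panel in position $k$ is $r_i H_k$ and the reflection across it is $r_i r_k r_i$. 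Thus applying the fold operation at position $j$ to $\gamma^i$ (for $j > i$) reflects the tail from position $j$ onward by $r_i r_j r_i$, after it has already been reflected by $r_i$; the net effect on $c_k$ and $p_k$ for $k \geq j$ is multiplication by $(r_i r_j r_i) r_i = r_i r_j$. For the middle segment $i \leq k < j$ only $r_i$ has been applied, and for $k < i$ nothing has been applied.

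Next I would carry out the symmetric computation for $(\gamma^j)^i$: here we first reflect the tail from position $j$ by $r_j$, and then fold at position $i < j$, which reflects everything from position $i$ onward by $r_i$ — note that folding at $i$ does not disturb which hyperplane sits in position $j$ in any way relevant, since $i < j$ means the panel $p_j$ (already moved to $r_j p_j$... wait, more carefully: in $\gamma^j$ the panel in position $i$ is still $p_i$ since $i < j$, so $r_i$ is genuinely the reflection used). The net effect: for $k < i$, nothing; for $i \leq k < j$, multiplication by $r_i$; for $k \geq j$, multiplication by $r_i r_j$. This matches the segment-by-segment description of $(\gamma^i)^j$ obtained above, so the two galleries coincide.

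The only genuine subtlety — and the step I would be most careful about — is the bookkeeping of \emph{which} reflection is applied at the second folding, because the second fold operation is applied to an already-modified gallery whose panels (hence hyperplanes) in the tail have been moved. Spelling out that the reflection across $r_i H_j$ is $r_i r_j r_i$, and that $(r_i r_j r_i)\circ r_i = r_i\circ r_j$ as the composite transformation applied to the far tail, is the crux; once that identity is in hand, everything else is a routine matching of the three segments $k<i$, $i\le k<j$, $k\ge j$. I would also remark that we should double-check the boundary positions $k = i$ and $k = j$ themselves (the alcove $c_i$ versus the panel $p_{i}$, etc.) fall out consistently from Definition~\ref{def:folds}, but these are covered by the same segment analysis.
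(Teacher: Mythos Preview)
Your proposal is correct and follows essentially the same approach as the paper: reduce to $i<j$, write out both double-foldings explicitly, and verify that the composite transformation on the tail segment $k\ge j$ is $r_i r_j$ in each case (with the middle segment receiving $r_i$ and the initial segment untouched). The only cosmetic difference is that the paper verifies the key identity by expressing each reflection as $c\,\tau(p)\,c^{-1}$ and computing, whereas you invoke the conjugation fact ``reflection across $r_iH_j$ is $r_i r_j r_i$'' directly; these are equivalent, and your formulation is arguably cleaner.
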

\begin{proof}
	Lemma~\ref{lem:properties of folds} deals with the case that $i=j$. So assume without loss of generality that $i<j$. 
	Then 
	\[
	(\gamma^j)^i=(c_0, p_1, c_1, \ldots, c_{i-1}, p_i, rc_i, \ldots, rc_{j-1}, rp_j, rr'c_j, \ldots, rr'c_n) 
	\] 
	where $r$ is the reflection along the hyperplane spanned by $p_i$ and $r'$ the reflection on the hyperplane spanned by $p_j$. 
	And 
	\[
	(\gamma^i)^j=(c_0, p_1, c_1, \ldots, c_{i-1}, p_i, rc_i, \ldots, rc_{j-1}, rp_j, r''rc_j, \ldots, r''rc_n) 
	\] 
	where $r$ is as above and $r''$ the reflection on the hyperplane spanned by $rp_j$. 
	
	For every panel $p$ of an alcove $c$ the unique second alcove in $\Sigma$ containing $p$ is $c\type(p)$. Therefore the reflection along the hyperplane $H$ spanned by $p$ is the product $c\type(p)c^{-1}$. We obtain 
	\[
	r=c_{i-1}\tau(p_i)c_{i-1}^{-1}, \; r'=c_{j-1}\tau(p_j)c_{j-1}^{-1} \;\text{ and }\; r''=rc_{j-1}\tau(rp_j)c_{j-1}^{-1}r.
	\]
	Reflections preserve types. Therefore $\type(rp_j)=\type(p_j)$. It is now easy to check that $r'' r = rr'$ and hence $(\gamma^i)^j=(\gamma^j)^i$.  
\end{proof}

Because of Lemma~\ref{lem:folds commute} we can write $\gamma^{ij}$ in place of $(\gamma^i)^j$ and define folds with respect to subsets of the index set. 
Hence we can fold a gallery simultaneously at several panels which implies that \ref{def:multiple folds} below is well defined. 	

\begin{definition}[Multifoldings]\label{def:multiple folds}
	Let $\gamma=(c_0, p_1, c_1, \dots, p_n, c_n)$ be a gallery and let $I= \{i_1, i_2, \ldots, i_k\}$ be a subset of the index set $\{1,2,\ldots n\}$ of $\gamma$ such that $i_j \in\{1,2,\ldots,n\} \;\forall 1\leq j\leq k$. Then we define the \emph{multifolding} of $\gamma$ at $I$ to be the gallery $\gamma^I\define \gamma^{i_1i_2\cdots i_k}$ and call $I$ the \emph{multi-folding-index}.   
\end{definition}

Lemma~\ref{lem:properties of folds} and \ref{lem:folds commute} imply similar properties for multifoldings. 

\begin{lemma}[Properties of multifoldings]\label{lem:properties of multifolds}
	Let $\gamma$ be a gallery of length $n+1$ and $I\subset \{1,2,\ldots,n\}$. Then the following hold. 
	\begin{enumerate}[label=(\roman*)] 
		\item $\type(\gamma)=\type(\gamma^I)$, i.e. folding does not change the type. 
		\item $\folds(\gamma^I)=\folds(\gamma)\Delta I$, i.e. the set of folds of $\gamma^I$ is the symmetric difference of the folds of $\gamma$ with the multi-folding-index $I$. In particular $(\gamma^I)^J=\gamma^{I\Delta J}$ for all $J\subset\{1,2,\ldots,n\}$. 
	\end{enumerate}	
\end{lemma}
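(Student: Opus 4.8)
The plan is to reduce everything to the single-fold case and then induct, using that foldings commute (Lemma~\ref{lem:folds commute}) and are involutions (Lemma~\ref{lem:properties of folds}).

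First I would prove the single-fold version of (ii), namely that for every index $i$ one has $\folds(\gamma^i)=\folds(\gamma)\,\Delta\,\{i\}$, by inspecting the definition $\gamma^i=(c_0,p_1,c_1,\dots,p_i,r_ic_i,r_ip_{i+1},\dots,r_ip_n,r_ic_n)$ position by position. For a position $j<i$ the relevant alcoves $c_{j-1},c_j$ are untouched, so $\gamma^i$ has a fold at $j$ exactly when $\gamma$ does; for a position $j>i$ both alcoves are replaced by their images under the single bijection $r_i$, so again the presence of a fold at $j$ is unchanged; and at position $i$ itself $c_{i-1}$ is unchanged while $c_i$ becomes $r_ic_i$, and since $r_i$ fixes the wall $H_i$ through the panel $p_i\subset c_{i-1}$ but not $c_{i-1}$ itself, one has $r_ic_i=c_{i-1}$ precisely when $c_i\neq c_{i-1}$. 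Hence the fold at $i$ is toggled, giving the claimed symmetric difference.

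Part (i) then follows by induction on $\#I$: writing $I=\{i_1,\dots,i_k\}$ with the $i_j$ pairwise distinct, $\gamma^I=(\gamma^{i_1\cdots i_{k-1}})^{i_k}$, and each single folding preserves the type by Lemma~\ref{lem:properties of folds}. The first assertion of (ii) is the same induction: since the elements of $I$ are distinct, peeling off $i_k$ gives $\folds(\gamma^I)=\folds(\gamma^{i_1\cdots i_{k-1}})\,\Delta\,\{i_k\}$ by the single-fold statement, and the induction hypothesis identifies $\folds(\gamma^{i_1\cdots i_{k-1}})$ with $\folds(\gamma)\,\Delta\,\{i_1,\dots,i_{k-1}\}$; combining yields $\folds(\gamma)\,\Delta\,I$. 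For the ``in particular'' statement, note that $(\gamma^I)^J$ is $\gamma$ folded successively along the concatenated index list $i_1,\dots,i_k,j_1,\dots,j_\ell$; by Lemma~\ref{lem:folds commute} this list may be reordered freely, and by Lemma~\ref{lem:properties of folds} any repeated pair of equal indices cancels, since $(\delta^i)^i=\delta$ for every gallery $\delta$. As $I$ and $J$ are each sets, an index occurs at most twice in the concatenation and survives the cancellation exactly when it lies in precisely one of $I,J$, i.e.\ in $I\,\Delta\,J$, so $(\gamma^I)^J=\gamma^{I\Delta J}$.

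The only genuinely non-formal point is the position-$i$ analysis in the first step, in particular the equivalence $r_ic_i=c_{i-1}\iff c_i\neq c_{i-1}$; everything else is bookkeeping built on the two cited lemmas. Alternatively one may observe that the commuting involutions $\gamma\mapsto\gamma^i$ generate an action of $(\Z/2)^n$ on galleries under which $\gamma^I$ is the image of $\sum_{i\in I}e_i$, which makes the ``in particular'' statement immediate; I would still establish the $\folds$ formula by the position-wise argument above.
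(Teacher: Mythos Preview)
Your proposal is correct and is exactly the kind of argument the paper has in mind: the paper does not give a detailed proof at all but simply states that Lemmas~\ref{lem:properties of folds} and~\ref{lem:folds commute} imply the result, and your write-up is a faithful unpacking of that implication. The one thing you add that the paper leaves implicit is the single-fold identity $\folds(\gamma^i)=\folds(\gamma)\,\Delta\,\{i\}$ (Lemma~\ref{lem:properties of folds} only records that the \emph{number} of folds changes by one), and your position-by-position argument for it is fine; after that, everything is the bookkeeping/induction you describe.
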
 

From what we have discussed the following is immediate. 

\begin{corollary}[Unfolding]
	For every folded gallery $\gamma$ of type $w$ and length $n+1$ there exists a subset $I\subset\{1,2,\ldots,n\}$ such that $\gamma^I$ is unfolded and of the same type. 
\end{corollary}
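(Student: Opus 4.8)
The plan is to read off the unfolding index directly from the set of folds of $\gamma$. Recall from Lemma~\ref{lem:properties of multifolds}(ii) that for any gallery $\gamma$ and any index set $I \subset \{1,2,\ldots,n\}$ one has $\folds(\gamma^I) = \folds(\gamma)\,\Delta\, I$, and from Lemma~\ref{lem:properties of multifolds}(i) that $\type(\gamma^I) = \type(\gamma)$. So the natural candidate is simply $I \define \folds(\gamma)$.

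First I would set $I = \folds(\gamma) \subset \{1,2,\ldots,n\}$, which is a legitimate choice of multi-folding-index since by definition $\folds(\gamma)$ consists of integers between $1$ and $n$. Then, applying Lemma~\ref{lem:properties of multifolds}(ii), $\folds(\gamma^I) = \folds(\gamma)\,\Delta\,\folds(\gamma) = \emptyset$. By the first item of the same lemma, $\type(\gamma^I) = \type(\gamma) = w$. Finally, a gallery with no folds is unfolded by definition, so $\gamma^I$ is an unfolded gallery of type $w$, which is exactly the assertion of the corollary.

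I expect essentially no obstacle here: the statement is an immediate corollary of the properties of multifoldings already established, and the only thing to observe is that taking the symmetric difference of a set with itself annihilates it. If one wanted to be slightly more careful, one could note that $\gamma^I$ is well defined by Definition~\ref{def:multiple folds} (which in turn relies on Lemma~\ref{lem:folds commute}), so that the order in which the individual foldings $\gamma^{i_1 i_2 \cdots i_k}$ are performed does not matter — but this is already baked into the notation $\gamma^I$. The length claim (that $\gamma^I$ still has a well-defined type of the expected form) is subsumed in $\type(\gamma^I)=\type(\gamma)$; note that an unfolding strictly decreases the number of folds by one at each step, so the unfolded $\gamma^I$ arises after exactly $\#\folds(\gamma)$ unfolding steps.
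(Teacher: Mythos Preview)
Your proposal is correct and is exactly the argument the paper has in mind: the corollary is stated immediately after Lemma~\ref{lem:properties of multifolds} with the remark that it is immediate from what was discussed, and taking $I=\folds(\gamma)$ together with items (i) and (ii) of that lemma is precisely the intended one-line justification.
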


In other words: Every folded gallery arises as a multifolding of an unfolded gallery of the same type. 

\begin{example}[Commuting folds and multifolds ]
	The gray gallery $\gamma^{4,7}$ in Figure~\ref{fig:multifolds} is the multifolding of the black gallery $\gamma$ at positions 4 and 7. 
	This figure also illustrates the fact that folds commute, which we have shown in Lemma~\ref{lem:folds commute}. The dotted and dashed galleries are the folds of $\gamma$ at positions 7, respectively 4. Both of them admit a fold that takes them to the gallery $\gamma^{4,7}$. 
\end{example}

\begin{figure}[htb]
	\begin{overpic}[width=0.5\textwidth]{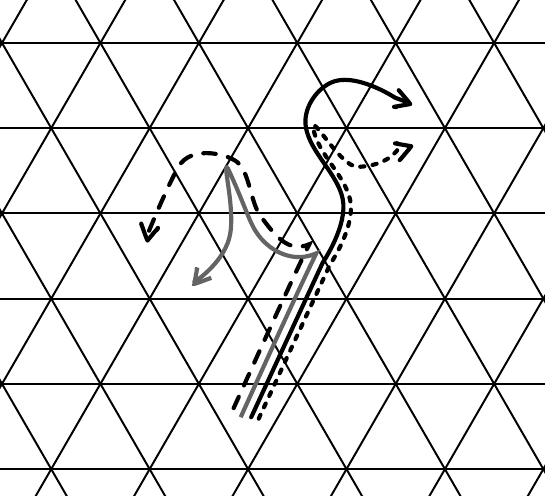}
		\put(30,32){$\gamma^{\{4,7\}}$}
		\put(25,42){$\gamma^4$}
		\put(77,60){$\gamma^7$}
		\put(77,72){$\gamma$}
	\end{overpic}
	
	\caption{This figure shows commuting folds at panels $4$ and $7$ of the black gallery $\gamma$. }	
	\label{fig:multifolds}
\end{figure}

\begin{notation}[Object A folds onto object B ]
	Let $(W,S)$ be a Coxeter system with Coxeter complex $\Sigma$ and suppose that $w$ is a word in $S$. Denote by $\gamma_w$ the unique unfolded gallery of type $w$ starting in $\id$. 
	We will write  
	\begin{enumerate}[label=(\roman*)]
		\item $\gamma \pfold{} \eta$ for galleries $\gamma, \eta$ if $\eta = \gamma^I$ for some $I\subset\{1,2,\ldots, n\}\setminus\folds(\gamma)$.
		\item $w\pfold{} u$ for a  word $u$ in $S$ if there exists a gallery $\eta$ with footprint $\foot(\eta)=u$ and  $\gamma_w\pfold{}\eta$ . 
		\item $w\pfold{} x$ for an element $x\in W$ if there exists a gallery $\eta$ with end-alcove $c_x$ such that $\gamma_w\pfold{}\eta$. 
	\end{enumerate}
	If in the above $\eta$ is positively folded with respect to some orientation $\phi$ we label the arrow with $\phi$ and write $A\pfold{\phi} B$. 
\end{notation}

\begin{notation}[Sets of  folded galleries]
	The set of all (multi-)folds of a gallery $\gamma$ is denoted by $\Gamma(\gamma)$. The set of all {(multi-)folds} of a gallery $\gamma$ that are positively folded with respect to a given orientation $\phi$ is denoted by $\Gamma_\phi^+(\gamma)$. We will sometimes write $\Gamma_\phi^+(w)$ for the set $\Gamma^+_\phi(\gamma_w)$, for $w$ a word and  $\gamma_w$ the unfolded gallery of type $w$.   
\end{notation}

\subsection{Statistics on positive folds}

In this subsection we restrict ourselves to Weyl chamber orientations on affine Coxeter complexes. So in the following we  assume, if not stated otherwise,  that $(\aW,S)$ is an affine Coxeter system with Coxeter complex $\Sigma$ and that $\phi=\widetilde\phi_a$ for some chamber $a\in\partial\Sigma$. 

The number of folds in a positively folded gallery with respect to a Weyl chamber orientation has natural bounds. The formula in Proposition~\ref{prop:bounds on folds} says that the length of the longest element in the associated spherical Weyl group is a uniform upper bound while \emph{reflection length} $\ell_R$, that is the length of an element measured with respect to the larger generating set $R$ of all reflections of $\aW$, provides a lower bound. 

\begin{prop}[Bounds on the number of folds]\label{prop:bounds on folds}
	Let $w_0$ denote the longest element in $\sW$. For any $x\in\aW$  and any $\phi$-positive multifolding $\gamma$ of a minimal gallery $\gamma_x$ with sink the alcove labeled by $x$ one has: 
	\[
	\ell_R(xy^{-1}) \leq \vert \folds(\gamma)\vert  \leq \ell(w_0), \text{ with } y\define\tau(\foot(\gamma)).
	\] 
\end{prop}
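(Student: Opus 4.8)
The plan is to establish the two bounds separately, in both cases by tracking how the hyperplanes of $\Sigma$ interact with the gallery $\gamma$ and with the chamber $a$ at infinity determining $\phi=\widetilde\phi_a$.

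For the \textbf{upper bound} $\vert\folds(\gamma)\vert\le\ell(w_0)$, the key observation is that a positive fold at panel $p_i$ of $\gamma$ forces a specific local configuration: since $c_i=c_{i-1}$ sits on the $\phi$-positive side of the wall $H_i$ through $p_i$, a representative Weyl chamber $C_a$ of $a$ lies on the same side of $H_i$ as $c_i$. I would argue that the walls $H_i$ at which $\gamma$ folds positively are pairwise non-parallel, or more precisely that at most one fold can occur in each parallel class of hyperplanes that separates the relevant region from $a$. The cleanest route is to use the ``straightening'' picture: apply Lemma~\ref{lem:footprint final alcove} so that the footprint $\foot(\gamma)$ is an unfolded (hence galleries-of-minimal-type after reduction) gallery from the source to $c_0\cdot y$ with $y=\tau(\foot(\gamma))$, and then observe that each positive fold corresponds to the gallery crossing and returning across a wall $H$ such that $C_a$ lies on the far side. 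Since $\gamma_x$ is minimal and $\phi$ is the Weyl chamber orientation, the set of such walls, read off at infinity via $\partial\phi$, injects into the set of walls of the spherical complex $\partial\Sigma$ separating a fixed chamber from $a$; that set has size at most $\ell(w_0)$, the number of positive roots. Making ``injects'' precise — i.e. that two distinct positive folds cannot lie in the same parallel class — is the step I expect to require the most care, and it is presumably where the minimality of $\gamma_x$ and wall-consistency/periodicity of $\phi$ get used.

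For the \textbf{lower bound} $\ell_R(xy^{-1})\le\vert\folds(\gamma)\vert$, I would compute the source-to-sink displacement. By Lemma~\ref{lem:footprint final alcove} the sink of $\gamma$ is $c_0\cdot y$ with $y=\tau(\foot(\gamma))$, while $\gamma$ is a multifolding of $\gamma_x$ whose sink is the alcove labelled $x$ (take $c_0=\id$ so the source is $\id$ and $x=[x]$). Each elementary folding $\gamma\mapsto\gamma^i$ in Definition~\ref{def:folds} composes the tail of the gallery with the reflection $r_i\in R$ across $H_i$; performing the multifolding at the index set $I$ with $\vert I\vert$ equal to the number of folds introduced expresses the sink of $\gamma$ as the image of the sink of $\gamma_x$ under a product of $\vert\folds(\gamma)\vert$ reflections. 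Concretely, writing the sink alcoves on the right as group elements, one gets $x\cdot(\text{product of }\vert\folds(\gamma)\vert\text{ reflections})$ equal to the sink $y$ of $\gamma$, so $xy^{-1}$ (or $yx^{-1}$, depending on the side conventions) is a product of $\vert\folds(\gamma)\vert$ reflections in $R$, whence $\ell_R(xy^{-1})\le\vert\folds(\gamma)\vert$. I would be careful here that $\ell_R$ is conjugation-invariant and insensitive to inversion, so the left-versus-right bookkeeping does not affect the final inequality.

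The \textbf{main obstacle} is the upper bound: turning the intuitive statement ``distinct positive folds occur in distinct parallel classes of walls, and those classes correspond to walls at infinity separating a chamber from $a$'' into a clean argument. I would handle it by passing to the induced spherical orientation $\partial\phi$ (Lemma~\ref{lem:spherical inherited}): each fold of $\gamma$ happens on a wall $H$, and positivity of the fold means $c_i$ lies on the $\partial\phi$-positive side of $\partial H$; combined with the fact that the unfolded model $\gamma_x$ is minimal — so it crosses each wall at most once, and in particular each parallel class at most once — one concludes that the folds inject into the walls of $\partial\Sigma$ lying on the non-$a$ side along $\gamma_x$, a set of size at most the number of positive roots $=\ell(w_0)$. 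Everything else is routine bookkeeping with Lemmas~\ref{lem:properties of multifolds} and~\ref{lem:footprint final alcove}.
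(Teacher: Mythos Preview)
Your lower bound argument is correct and essentially complete: writing $\gamma=\gamma_x^I$ with $I=\{i_1<\cdots<i_k\}$ and $k=\lvert\folds(\gamma)\rvert$, one checks directly from Definition~\ref{def:folds} (and the proof of Lemma~\ref{lem:folds commute}) that the sink of $\gamma$ equals $r_{i_1}\cdots r_{i_k}\,x$, where $r_{i_j}$ is the reflection across the $i_j$-th panel of $\gamma_x$. Hence $yx^{-1}$ is a product of $k$ reflections and $\ell_R(xy^{-1})=\ell_R(yx^{-1})\le k$. This matches what the paper cites as Lemma~4.26 of \cite{MST}.

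The upper bound argument, however, has a genuine gap. You assert that the minimal gallery $\gamma_x$ ``crosses each wall at most once, and in particular each parallel class at most once.'' The first clause is true; the second is false. Already in type $\tilde A_1$ a minimal gallery of length $n$ crosses $n$ hyperplanes, all lying in the single parallel class. Your proposed map sending a fold at $H$ to the wall $\partial H\in\partial\Sigma$ is therefore not injective on the panels of $\gamma_x$, and the argument as written does not bound $\lvert\folds(\gamma)\rvert$ by the number of spherical walls.

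The paper does not supply a self-contained proof of the upper bound either; it simply invokes Corollary~4.25 of \cite{MST}. The mechanism there is more delicate than ``one fold per parallel class of $\gamma_x$'': one has to track how a fold in one hyperplane reflects the tail of the gallery and thereby reverses the $\phi$-side from which subsequent hyperplanes \emph{in that same parallel class} are approached, while simultaneously controlling the effect of folds in other classes. Minimality of $\gamma_x$ and periodicity of $\phi$ do enter, but not via the shortcut you propose. If you want a self-contained argument you will need to reproduce that analysis (or an equivalent counting via crossings versus folds in each direction class); the sketch you give does not yet do so.
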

\begin{proof}
	By Lemma~\ref{lem:footprint final alcove} the element $y=\tau(\foot(\gamma))$ corresponds to the final alcove of $\gamma$. With this observation the claim directly follows from Corollary 4.25 and Lemma 4.26 in \cite{MST}. 
\end{proof}

Note that Section 4 of \cite{MST} contains a more detailed study of folds, crossings and dimensions of galleries. 

We now introduce a valuation on elements of $W$, respectively the corresponding alcoves in $\Sigma$. We have not worked out the precise connection, but this seems closely related to the notion of load-bearing walls introduced in \cite{GaussentLittelmann}.

\begin{notation}[Separating hyperplanes] \label{not:separating hyperplanes}
	Denote by $\hyp(\Sigma)$ the collection of all hyperplanes in a Coxeter complex $\Sigma$. 
	For some fixed alcove $c$ in $\Sigma$ let $\hyp(c)$ be the set of hyperplanes separating $c$ and the identity alcove $\id$. Then  $\hyp(c)=\hyp^+_\phi(c)\sqcup\hyp^-_\phi(c)$, where $\hyp^+_\phi(c)$ is the subset of $\hyp(c)$ for which $c$ is on a positive side and $\hyp^-_\phi(c)$ the ones for which $c$ is on a negative side. 
\end{notation}

In the following we will write $\Ch(\Sigma)$ for the collection of all alcoves in a Coxeter complex $\Sigma$.

\begin{definition}[$\phi$-valuation]\label{def:valuation}
	We define the \emph{$\phi$-valuation} to be the map 
	\[
	\v_\phi:\Ch(\Sigma) \to \Z \;\text{ with }\; c\mapsto \v_\phi(c)\define\vert\hyp^+_\phi(c) \vert - \vert\hyp^-_\phi(c)\vert.  
	\]
\end{definition}

The function introduced in the next definition can be thought of as  extension of a wall consistent orientation to pairs of alcoves and hyperplanes. It decides whether a given alcove is on a positive side of a hyperplane.

\begin{definition}[Positive sides of hyperplanes]
	We define a function $p_\phi$ on $\Ch(\Sigma)\times\hyp(\Sigma)$ as follows: 
	\[
	p_\phi(c,H)\define \left\{ \begin{array}{ll} 1 & \text{ if $c$ is on a $\phi$-positive side of $H$ } \\ 0 & \text{ otherwise. } \end{array} \right.
	\]
\end{definition}	

\begin{lemma}[Formulas for $\v_\phi$]
	Denote the identity alcove in $\Sigma$ by $\id$. Then, for all $c\in\Ch(\Sigma)$ we have  
	\[
	\v_\phi(c)=\sum_{H\in \hyp(\Sigma)} (p_\phi(c,H) - p_\phi(1,H)). 
	\]
\end{lemma}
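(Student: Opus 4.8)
The plan is to split the sum over all hyperplanes of $\Sigma$ according to whether a hyperplane separates $c$ from $\id$ or not, and to check that the non-separating hyperplanes contribute nothing.

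First I would invoke the standing assumption of this subsection, namely that $\phi=\widetilde\phi_a$ is a Weyl chamber orientation. By Lemma~\ref{lem:basic properties orientations} alcove orientations on the spherical complex $\partial\Sigma$ are wall consistent and locally nontrivial, and by Lemma~\ref{lem:affine inherited} the induced affine orientation $\widetilde\phi_a$ inherits both properties. In particular every wall $H$ in $\Sigma$ has \emph{exactly one} $\phi$-positive side. Together with wall consistency this shows that the value $p_\phi(d,H)$ depends only on which of the two half-spaces of $H$ contains $d$; hence $p_\phi(d,H)=p_\phi(d',H)$ whenever $d$ and $d'$ lie on a common side of $H$, and $p_\phi(d,H)+p_\phi(d',H)=1$ whenever $H$ separates $d$ and $d'$.

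Next I would observe that $\hyp(c)$ is a finite set — it is, for instance, the set of walls crossed by a minimal gallery from $\id$ to $c$ — so the right-hand side has only finitely many nonzero summands and is well defined. Decomposing $\hyp(\Sigma)=\hyp(c)\sqcup\big(\hyp(\Sigma)\setminus\hyp(c)\big)$: for $H\notin\hyp(c)$ the alcoves $c$ and $\id$ lie on a common side of $H$, so $p_\phi(c,H)-p_\phi(1,H)=0$; for $H\in\hyp(c)$ the alcoves $c$ and $\id$ lie on opposite sides of $H$, and exactly one of them is on the $\phi$-positive side, so $p_\phi(c,H)-p_\phi(1,H)$ equals $+1$ if $H\in\hyp^+_\phi(c)$ and $-1$ if $H\in\hyp^-_\phi(c)$, using the decomposition $\hyp(c)=\hyp^+_\phi(c)\sqcup\hyp^-_\phi(c)$ from Notation~\ref{not:separating hyperplanes}.

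Summing these contributions, the right-hand side becomes $\vert\hyp^+_\phi(c)\vert-\vert\hyp^-_\phi(c)\vert$, which is exactly $\v_\phi(c)$ by Definition~\ref{def:valuation}. I do not anticipate a genuine obstacle; the only points deserving care are the finiteness (hence well-definedness) of the sum over $\hyp(\Sigma)$, and the reduction of $p_\phi$ from alcove--panel pairs to half-spaces, which is precisely what wall consistency together with local nontriviality of a Weyl chamber orientation guarantees.
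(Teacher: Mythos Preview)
Your proof is correct and follows essentially the same approach as the paper: split the sum over $\hyp(\Sigma)$ into separating and non-separating hyperplanes, observe that the latter contribute zero while the former contribute $\pm 1$ according to the decomposition $\hyp(c)=\hyp^+_\phi(c)\sqcup\hyp^-_\phi(c)$, and recover the definition of $\v_\phi(c)$. You are a bit more explicit than the paper about the well-definedness of the infinite sum and about why wall consistency and local nontriviality make $p_\phi$ depend only on half-spaces, but the argument is the same.
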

\begin{proof}	
	Recall from \ref{not:separating hyperplanes} that the set $\hyp(c)$  of hyperplanes separating $c$ from $\id$ can be written as a disjoint union $\hyp(c)=\hyp^+_\phi(c)\sqcup\hyp^-_\phi(c)$. 
	Now every hyperplane $H$ has a positive and a negative side and hence 
	either 
	$(p_\phi(c,H) = p_\phi(1,H))$ or they are different, $H\in\hyp(c)$ and  $(p_\phi(c,H)$ and $ p_\phi(1,H))$ differ by  $\pm 1$.  Therefore 
	\[
	\v_\phi(c)=\sum_{H\in \hyp(\Sigma)} (p_\phi(c,H) - p_\phi(1,H)) = \sum_{H\in \hyp(c)} (p_\phi(c,H) - p_\phi(1,H))
	\]
	and the result follows from combining summands. 
\end{proof}

\begin{lemma}[Length and valuations]\label{lem:length}
	Fix $x\in W$ and denote by $c_x$ the alcove corresponding to $x$. Then 
	\[
	\ell(x)\geq \v_\phi(c_x).
	\]
\end{lemma}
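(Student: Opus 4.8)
The plan is to compare the $\phi$-valuation $\v_\phi(c_x)$ against the ordinary Coxeter length $\ell(x)$ by interpreting both in terms of the set $\hyp(c_x)$ of hyperplanes separating $c_x$ from the identity alcove $\id$. Recall the standard fact that $\ell(x)$ equals the number of hyperplanes separating $c_x$ from $\id$, i.e. $\ell(x) = \vert \hyp(c_x)\vert$. On the other hand, by Notation~\ref{not:separating hyperplanes} we have the disjoint decomposition $\hyp(c_x) = \hyp^+_\phi(c_x)\sqcup\hyp^-_\phi(c_x)$, so that $\vert\hyp(c_x)\vert = \vert\hyp^+_\phi(c_x)\vert + \vert\hyp^-_\phi(c_x)\vert$. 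By Definition~\ref{def:valuation}, $\v_\phi(c_x) = \vert\hyp^+_\phi(c_x)\vert - \vert\hyp^-_\phi(c_x)\vert$.

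First I would record the length formula $\ell(x) = \vert\hyp(c_x)\vert$, citing one of the standard references (e.g. \cite{AbramenkoBrown} or \cite{BjoernerBrenti}); this is the only external input needed. Then the inequality is immediate from the two displayed identities:
\[
\ell(x) = \vert\hyp^+_\phi(c_x)\vert + \vert\hyp^-_\phi(c_x)\vert \geq \vert\hyp^+_\phi(c_x)\vert - \vert\hyp^-_\phi(c_x)\vert = \v_\phi(c_x),
\]
where the inequality uses $\vert\hyp^-_\phi(c_x)\vert \geq 0$. One small point worth making explicit: since $\phi$ is wall consistent (which is the standing assumption in this subsection, as $\phi = \widetilde\phi_a$ is a Weyl chamber orientation), the notions "$c_x$ is on a $\phi$-positive side of $H$" and hence the sets $\hyp^\pm_\phi(c_x)$ are well defined, so the decomposition in Notation~\ref{not:separating hyperplanes} genuinely partitions $\hyp(c_x)$.

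There is no real obstacle here; the statement is essentially a bookkeeping consequence of the definitions once the length-equals-separating-hyperplanes fact is invoked. The only thing to be a little careful about is making sure the ambient hypotheses guarantee that $\hyp^+_\phi$ and $\hyp^-_\phi$ are defined as stated (i.e. that $\phi$ is wall consistent), and noting that equality $\ell(x) = \v_\phi(c_x)$ holds precisely when $\hyp^-_\phi(c_x) = \emptyset$, i.e. when $c_x$ lies on the $\phi$-positive side of every hyperplane separating it from $\id$ — a remark that may be useful later even though it is not required for the lemma.
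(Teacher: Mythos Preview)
Your proof is correct and follows essentially the same argument as the paper: both invoke $\ell(x)=\vert\hyp(c_x)\vert$, split $\hyp(c_x)$ into its $\phi$-positive and $\phi$-negative parts, and observe that the sum of the two cardinalities dominates their difference. Your additional remarks on wall consistency and the equality case are accurate but not needed for the lemma itself.
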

\begin{proof}
	Simply compute $\ell(x)=\vert \hyp(c_x)\vert = \vert \hyp^+_\phi(c_x)\vert + \vert \hyp^-_\phi(c_x)\vert \geq \v_\phi(c_x)$. 
\end{proof}

\begin{definition}[$\phi$-dominant alcoves]\label{def:dominant}
	An alcove $c$ is \emph{dominant} with respect to an orientation $\phi$ if $\v_\phi(c)=\ell(c)$.  	
\end{definition}

Recall that for a given chamber $a$ in $\partial\Sigma$ we  write $\widetilde \phi_a$ for the Weyl chamber orientation on $\Sigma$  induced by the simplex orientation $\phi_a$ on $\partial\Sigma$.  By slight abuse of notation we will write $a\in\sW$. 

\begin{lemma}[length via Weyl chamber orientations]\label{lem:dominant}
	For every $x\in\aW$ and its corresponding alcove $c_x$ we have  
	\[
	\ell(x) = \max_{a\in\sW} \v_{\tilde\phi_a}(c_x).
	\]
\end{lemma}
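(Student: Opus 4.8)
The plan is to bound $\v_{\tilde\phi_a}(c_x)$ above by $\ell(x)$ for every $a\in\sW$, and then to exhibit a single chamber $a$ at infinity for which equality holds. The upper bound is already Lemma~\ref{lem:length} applied to $\phi=\tilde\phi_a$, so the whole content lies in producing the optimal $a$. Recall from Definition~\ref{def:dominant} that this amounts to finding $a$ such that $c_x$ is $\tilde\phi_a$-dominant, i.e.\ $\hyp^-_{\tilde\phi_a}(c_x)=\emptyset$: every hyperplane separating $c_x$ from $\id$ must have $c_x$ on its $\tilde\phi_a$-positive side.

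The key step is the following choice. Using the alternative description of Weyl chamber orientations in the remark after Definition~\ref{def:Weyl chamber orientation}, $c_x$ lies on the $\tilde\phi_a$-positive side of a hyperplane $H$ precisely when $a$ has a representative Weyl chamber on the same side of $H$ as $c_x$. So I want a parallel class $a$ of Weyl chambers that "points away from $\id$ as seen from $c_x$", in the sense that for each of the finitely many hyperplanes $H$ in $\hyp(c_x)$, some representative of $a$ lies on the $c_x$-side of $H$. The natural candidate is $a = p(x)\cdot a_0$ where $a_0$ is the parallel class of the fundamental Weyl chamber $\Cf$ and $p:\aW\to\sW$ is the projection; equivalently, $a$ is the class of $x.\Cf$. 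Indeed, translating $x.\Cf$ so its cone point sits at the barycenter of $c_x$ gives a Weyl chamber whose tip-alcove is $c_x$ itself; I then need to check that this Weyl chamber lies on the $c_x$-side of every hyperplane in $\hyp(c_x)$. A hyperplane $H$ separating $c_x$ from $\id$ cannot pass through the interior of $c_x$, and since $x.\Cf$ based at $c_x$ "opens up" away from $c_x$, I must verify it never crosses back to the $\id$-side of such an $H$. Concretely, after applying $x^{-1}$ (using the $W$-equivariance of the setup, cf.\ Lemma~\ref{lem:left-action} and the behaviour of $\v_\phi$ under the $W$-action), this reduces to the statement that $\Cf$ based at $\id$ lies, for each hyperplane separating $\id$ from $c_{x^{-1}}$, on the $\id$-side — which is a standard fact about the fundamental chamber and the inversion set of $x^{-1}$ (the hyperplanes separating $\id$ from $c_{x^{-1}}$ correspond to the inversions of $x^{-1}$, and $\Cf$ is on the non-inversion side of each).

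With that choice, $\hyp^-_{\tilde\phi_a}(c_x)=\emptyset$, so $\v_{\tilde\phi_a}(c_x)=|\hyp^+_{\tilde\phi_a}(c_x)| = |\hyp(c_x)| = \ell(x)$, which together with Lemma~\ref{lem:length} gives $\ell(x)=\max_{a\in\sW}\v_{\tilde\phi_a}(c_x)$. The main obstacle I anticipate is the geometric verification that the representative of $x.\Cf$ anchored at $c_x$ does not wander to the wrong side of any hyperplane in $\hyp(c_x)$; the cleanest route is to first reduce to the case $x^{-1}$ (or equivalently to arrange $c_x$ adjacent to or equal to $\id$ is not enough, so instead) by $W$-equivariance and then invoke the description of $\hyp(c_{x^{-1}})$ via the inversion set, so that the claim becomes the elementary observation that a minimal gallery from $\id$ to $c_{x^{-1}}$ crosses each of its hyperplanes exactly once and $\Cf$ lies on the starting side of each. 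One should also double-check the edge case where $H$ contains a panel of $c_x$, where one uses that the relevant representative can be taken with cone point in the interior of $c_x$, placing it strictly on the $c_x$-side.
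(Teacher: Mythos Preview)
Your overall strategy (upper bound from Lemma~\ref{lem:length}, then exhibit a single $a$ with $\hyp^-_{\tilde\phi_a}(c_x)=\emptyset$) is exactly right, but the specific chamber $a=p(x)\cdot a_0=\partial(x.\Cf)$ that you pick does \emph{not} do the job, and the ``standard fact'' you invoke after the $x^{-1}$-reduction is false in the affine setting. Concretely, in type $\tilde A_1$ with $S=\{s_0,s_1\}$ and $\sW=\langle s_1\rangle$, take $x=s_1s_0$. Then $p(x)=1$, so your choice is $a=a_0=\partial\Cf$; but $c_x$ lies in the Weyl chamber $s_1.\Cf$, and for the hyperplane $H_{s_1}$ through the origin (which separates $c_x$ from $\id$) the $\tilde\phi_{a_0}$-positive side is the $\Cf$-side, so $c_x$ is on the \emph{negative} side. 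Equivalently, after your reduction one would need $\Cf$ to lie on the $\id$-side of every hyperplane separating $\id$ from $c_{x^{-1}}$; but $c_{x^{-1}}$ lies deep inside $\Cf$, and $\Cf$ obviously crosses those hyperplanes. The confusion is that in the spherical picture the fundamental chamber \emph{is} the identity alcove, whereas in the affine picture $\Cf$ is an unbounded cone which generically meets both half-spaces of any hyperplane not passing through the origin.

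The paper's fix is to choose $a$ differently: take $C$ to be the Weyl chamber \emph{based at the origin} that contains $c_x$, and set $a=\partial C$. Since $\id$ sits at the tip of $C$ and $c_x$ lies inside $C$, every hyperplane $H\in\hyp(c_x)$ is crossed while moving from the tip of $C$ into $C$, hence the $c_x$-side of $H$ is the side into which the direction $a=\partial C$ points; in other words $c_x$ is on the $\tilde\phi_a$-positive side of each such $H$, giving $\v_{\tilde\phi_a}(c_x)=\lvert\hyp(c_x)\rvert=\ell(x)$. Note that this $a$ depends on which Weyl chamber (based at $0$) the alcove $c_x$ occupies, which is governed by the translation part of $x$ and \emph{not} by $p(x)$ alone.
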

\begin{proof}
	Let $C$ be the unique Weyl chamber with tip at the origin $0$ containing the alcove $c_x$ and write $a\define\partial C$ for the chamber at infinity determined by $C$. Let $\phi$ be the affine inherited valuation from the alcove orientation towards $a$ at infinity. Then any mininal gallery from $\id$ to $c_x$ has the property that its panels span hyperplanes for which $c_x$ is on the $\phi_a$-positive side. Therefore 
	\[
	\ell(x)=\vert\hyp_{\phi}^+(c_x) \vert =\v_{\phi}(c_x)\leq \max_{a\in\sW} \v_{\tilde\phi_a}(c_x).  
	\]
	The statement now follows from Lemma~\ref{lem:length}. 	
\end{proof}

\begin{remark}[$\phi$--dominant alcoves]
	In view of Definition~\ref{def:dominant} the assertion of  Lemma~\ref{lem:dominant} says that for every alcove $c$ there exists a Weyl chamber orientation $\phi$ such that $c$ is dominant with respect to $\phi$. So the lemma should not be surprising. 
\end{remark}

\begin{lemma}[Reflections increasing $\v$]\label{lem:reflection-v}
	Let $\varphi\in\Dir(\aW)$ be a direction and let $r\in R$ be a reflection in $\aW$ across a hyperplane $H_r$. 
	Then for any $x\in\aW$, $\v_\varphi(x) > \v_\varphi(rx)$ if and only if $x$ is on the $\varphi$-positive side of $H_r$.   
\end{lemma}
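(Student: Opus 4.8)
First I would unpack what the inequality $\v_\varphi(x) > \v_\varphi(rx)$ means in terms of separating hyperplanes. Recall from Notation~\ref{not:separating hyperplanes} that for any alcove $c$, the set $\hyp(c)$ of hyperplanes separating $c$ from $\id$ decomposes as $\hyp^+_\varphi(c)\sqcup\hyp^-_\varphi(c)$, and $\v_\varphi(c)=|\hyp^+_\varphi(c)|-|\hyp^-_\varphi(c)|$. Using the alternative formula $\v_\varphi(c)=\sum_{H\in\hyp(\Sigma)}(p_\varphi(c,H)-p_\varphi(\id,H))$ from Lemma~\ref{lem:length}'s predecessor (``Formulas for $\v_\phi$''), the $p_\varphi(\id,H)$ terms cancel, so it suffices to compare $\sum_H p_\varphi(x,H)$ with $\sum_H p_\varphi(rx,H)$. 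The key observation is that left multiplication by $r$ is the reflection across $H_r$; it permutes $\hyp(\Sigma)$, fixing $H_r$ and swapping the two half-spaces of every other hyperplane in a way that pairs $H$ with $rH$. For $H\neq H_r$, $c$ lies on a $\varphi$-positive side of $H$ iff $rc$ lies on a $\varphi$-positive side of $rH$; since $\varphi$ is a periodic (wall-consistent) orientation coming from a direction and $r$ is an affine reflection, $rH$ and $H$ are distinct hyperplanes but the contributions rearrange. So $\sum_{H\neq H_r} p_\varphi(x,H) = \sum_{H\neq H_r} p_\varphi(rx, rH) = \sum_{H'\neq H_r} p_\varphi(rx, H')$, where the last step reindexes over $H'=rH$. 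Hence the only surviving difference is the $H_r$-term: $\v_\varphi(x) - \v_\varphi(rx) = p_\varphi(x,H_r) - p_\varphi(rx,H_r)$.

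Next I would analyze this single term. Since $x$ and $rx$ lie on opposite sides of $H_r$ (reflection across $H_r$ swaps them), exactly one of $p_\varphi(x,H_r)$, $p_\varphi(rx,H_r)$ equals $1$ and the other equals $0$ — this uses that the Weyl chamber orientation $\varphi=\widetilde\phi_a$ is locally nontrivial, being induced from an alcove (chamber) orientation at infinity via Lemma~\ref{lem:affine inherited}, so $H_r$ has exactly one $\varphi$-positive side. Therefore $\v_\varphi(x)-\v_\varphi(rx) \in \{+1,-1\}$, and it equals $+1$ precisely when $p_\varphi(x,H_r)=1$, i.e. precisely when $x$ lies on the $\varphi$-positive side of $H_r$. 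This gives exactly the claimed equivalence.

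The main obstacle is the bookkeeping in the first step: making rigorous the claim that left multiplication by the affine reflection $r$ induces a bijection on $\hyp(\Sigma)$ under which positivity of sides is preserved in the appropriate sense, so that all terms except the $H_r$-term cancel. Two subtleties need care. First, one must check the reindexing $H\mapsto rH$ is a genuine involution on $\hyp(\Sigma)\setminus\{H_r\}$ and that it matches up $\hyp(x)$-type contributions correctly — here it is cleanest to use the global-sum formula rather than the $\hyp(c)$ description, precisely to avoid tracking which hyperplanes separate what. Second, the statement that ``$c$ on $\varphi$-positive side of $H$ $\iff$ $rc$ on $\varphi$-positive side of $rH$'' requires that $\varphi$ is preserved under $r$ in the relevant local sense; this is where periodicity of $\varphi$ enters decisively, since $rH$ and $H$ are parallel hyperplanes (both perpendicular-translates along the $H_r$-reflection) and a periodic orientation assigns positive sides compatibly along parallel classes. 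Alternatively — and this may be the slicker route — one can invoke Lemma~\ref{lem:left-action}: translating the whole configuration by $r$ converts the $\widetilde\phi_a$-orientation into the $\widetilde\phi_{r.a}$-orientation, reducing the comparison to one about a fixed geometric picture. I would likely present the global-sum computation as the main argument and mention the $W$-action viewpoint as a remark. Note there is a minor notational wrinkle to flag: the hypothesis writes $\varphi\in\Dir(\aW)$, so I would first recall that such a direction determines a Weyl chamber orientation $\widetilde\phi_\varphi$ and that $\v_\varphi$ abbreviates $\v_{\widetilde\phi_\varphi}$, consistent with the usage around Lemma~\ref{lem:dominant}.
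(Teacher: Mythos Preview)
Your argument has a genuine gap at the cancellation step. You claim that for $H\neq H_r$ one has $p_\varphi(x,H)=p_\varphi(rx,rH)$, i.e.\ that ``$c$ lies on the $\varphi$-positive side of $H$ iff $rc$ lies on the $\varphi$-positive side of $rH$''. This would say that the isometry $r$ preserves the orientation $\widetilde\phi_\varphi$. But by the very Lemma~\ref{lem:left-action} you cite, $r$ carries $\widetilde\phi_a$ to $\widetilde\phi_{r.a}$, and for an affine reflection $r$ the boundary chamber $r.a$ is in general different from $a$; so $r(H^+)$ need not be $(rH)^+$. Your justification via periodicity also fails: $rH$ is \emph{not} parallel to $H$ unless $H$ is parallel to $H_r$, and even when they are parallel, $r$ swaps the two half-spaces, so $r(H^+)=(rH)^-$ whenever $H$ is parallel to $H_r$.

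As a consequence, your conclusion $\v_\varphi(x)-\v_\varphi(rx)\in\{+1,-1\}$ is false. Already in type $\tilde A_1$ with $\varphi$ pointing to the right, $\id=[0,1]$, $H_r$ the wall at $0$, and $x$ the alcove $[n,n+1]$ with $n\gg 0$, one computes $\v_\varphi(x)=n$ and $\v_\varphi(rx)=-(n+1)$, so the difference is $2n+1$. The paper's proof acknowledges exactly this: it works with the set $\shyp$ of hyperplanes separating $x$ from $rx$, writes $\v_\varphi(x)-\v_\varphi(rx)=|\shyp^+|-|\shyp^-|$, and then uses the involution $H\mapsto rH$ on $\shyp$ (which has the single fixed point $H_r$) together with a convexity argument to show $r\shyp^-\subsetneq\shyp^+$, hence $|\shyp^+|>|\shyp^-|$. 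The involution does \emph{not} match $\shyp^+$ with $\shyp^+$ as your argument would require; rather, one must rule out $\shyp^-\cap r\shyp^-\neq\emptyset$, which is where the actual work lies.
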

\begin{proof}
	
	It suffices to show one implication of the equivalence, since the other implication is obtained by exchanging $x$ and $rx$, and equality of $\v_\varphi(x)$ and $\v_\varphi(rx)$ is impossible by parity. So let $x$ lie on the $\varphi$-positive side of $H_r$.
	
	Consider the set $\shyp$ of those hyperplanes separating $x$ and $rx$. Let $\shyp^+$ be the set of hyperplanes $H \in \shyp$ such that $x$ is on the $\varphi$-positive side of $H$ and $rx$ is on the $\varphi$-negative side. Moreover, let $\shyp^- = \shyp \setminus \shyp^+$ be the set of hyperplanes $H \in \shyp$ such that $rx$ is on the $\varphi$-positive side of $H$ and $x$ is on the $\varphi$-negative side.
	
	Observe that $\v_\varphi(x) - \v_\varphi(rx) = |\hyp_\varphi^+(x)| - |\hyp_\varphi^+(rx)| = |\shyp^+| - |\shyp^-|$. Therefore it suffices to show that $|\shyp^+| > |\shyp^-|$.
	
	Observe also that the map $H \mapsto rH$ is an involution on $\shyp$ with exactly one fixed point $H_r$, where $H_r$ is the reflection hyperplane of $r$. We claim that $\shyp^- \cap r\shyp^- = \emptyset$. If this is true, then $r\shyp^-$ is a proper subset of $\shyp^+$ (proper because $H_r = rH_r$ lies in $\shyp^+$ but not in $r\shyp^-$), so $|\shyp^+| > |r\shyp^-| = |\shyp^-|$ and the proof is done.
	
	We now want to prove the claim. For any $H \in \shyp$, denote by $H^+$ and $H^-$ the half-spaces of $H$ on the $\varphi$-positive and $\varphi$-negative side respectively.
	
	Assume for contradiction that there is some $H \in \shyp^- \cap r\shyp^-$. Let $J$ be the intersection of  $H^+$ with  $(rH)^+$. The set $J$ is nonempty, since $rx$ lies in $J$, and its boundary $\partial J = \partial H^+ \cap \partial (rH)^+$ at infinity contains $\sigma$.
	
	Now $r((rH)^+)$ is some half-space of $H$ that contains $rrx = x$, so it must be the $\varphi$-negative half-space $H^-$ because $H \in \shyp^-$.

	Now $rJ = r(H^+) \cap r((rH)^+) \subseteq r((rH)^+) = H^-$, and $J \subseteq H^+$, so $J$ and $rJ$ are disjoint sets. Hence $J$ cannot contain a fixed point of $r$, so by convexity must be contained in a single half-space of $H_r$. As the boundary of $J$ contains $\sigma$, we find that $J \subset H_r^+$. Since $rx$ lies in $J$, we find that $rx$ lies on the $\varphi$-positive side of $H_r$, so $x$ lies on the $\varphi$-negative side of $H_r$, which contradicts our choice of $x$. This proves the claim.
\end{proof}

\section{Braid invariant orientations}

We introduce the notion of a braid invariant orientation in this section. It will later be used to prove that certain shadows do not depend on a chosen word representing a given element in a Coxeter group. 


\begin{remark}[Braid moves on words] 
	Ideally one would define an equivalence relation on galleries coming from braid moves on words. 
	The word property, discovered by Matsumoto \cite{Matsumoto} and Tits \cite{Tits} in the 1960s, implies that any two reduced expressions for an element $x\in W$ can be connected via a sequence of braid moves. (For a textbook reference see Theorem 3.3.1 in \cite{BjoernerBrenti}.) 
	A braid move can also be considered for a folded gallery $\gamma$ by changing the sub-gallery corresponding to the word $\underbrace{stst\ldots}_{m_{st}}$ to the sub-gallery of type $\underbrace{tsts\ldots}_{m_{st}}$ while keeping the folds in the same positions, i.e. on the letters with the same index in the word. This however, will in general not be well defined, as the new sub-gallery may end in a different alcove.
\end{remark}

\begin{definition}[Braid invariant orientations]\label{def:braid invariance}
	Let $\Sigma$ be a Coxeter complex for the Coxeter system $(W, S)$.
	An orientation $\phi$ on $\Sigma$ is \emph{braid invariant} if for any braid equivalent words $w, w'$ in $S$ and any $x \in W$, it is true that $w \pfold{\phi} x$ if and only if $w' \pfold{\phi} x$. 
	We call $\phi$ \emph{strongly braid invariant} if and only if $y\phi$ is braid invariant for all $y \in W$.
\end{definition}

\begin{notation}[Folds for braid invariant orientations]
	Let $\Sigma$ be a Coxeter complex for the Coxeter system $(W, S)$, let $\phi$ be a braid invariant orientation on $\Sigma$. Given two elements $x, y \in W$, we define $x \pfold{\phi} y$ to be equivalent to $w \pfold{\phi} y$ for any (and thus every) reduced expression $w$ of $x$.
\end{notation}

It is obvious that the trivial positive/negative orientation is (strongly) braid invariant. 
Proposition 4.33 of \cite{MST} implies that the Weyl chamber orientations are braid invariant. We include an elementary proof for this fact in Proposition~\ref{prop:weyl = braid inv} below.

\begin{prop}[Weyl chamber orientations are braid invariant]\label{prop:weyl = braid inv}
	Let $\Sigma$ be an affine Coxeter complex with boundary $\Delta$, and let $\tilde\phi_\sigma$ be a Weyl chamber orientation, induced by some chamber $\sigma$. Then $\Sigma$ is strongly braid invariant.
\end{prop}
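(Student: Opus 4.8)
The plan is to reduce the claim to a statement about a single braid move and then to a local, pictorial verification inside the rank-two parabolic subcomplex where that braid move happens. First I would observe that, by the $W$-action on orientations together with Lemma~\ref{lem:left-action} (and the fact that $y \cdot \tilde\phi_\sigma = \tilde\phi_{y.\sigma}$ for any $y \in W$, since the $W$-action permutes chambers at infinity), it suffices to prove plain braid invariance of $\tilde\phi_\sigma$ for an arbitrary chamber $\sigma$; strong braid invariance then follows for free. Next, by the word property of Matsumoto--Tits, any two reduced expressions of $x \in W$ are connected by a finite chain of braid moves, and a positively folded gallery is built as a multifolding of the unfolded gallery $\gamma_w$; so it is enough to show: if $w'$ is obtained from $w$ by a single braid move $\underbrace{sts\cdots}_{m} \leftrightarrow \underbrace{tst\cdots}_{m}$ applied to a reduced factor, then $w \pfold{\tilde\phi_\sigma} z$ iff $w' \pfold{\tilde\phi_\sigma} z$ for every $z \in W$.

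The key step is to localize. The braid move only alters a contiguous block of the gallery lying inside a single residue, namely a coset $x_0 W_{\{s,t\}}$, which as a Coxeter complex is the (spherical or affine, but in the braid-move context $m=m_{st}$ finite so spherical) complex of the dihedral group $W_{\{s,t\}}$ — a $2m$-gon. I would decompose an arbitrary positively folded gallery of type $w$ into: an initial segment, the block of type $\underbrace{sts\cdots}_m$ sitting inside some copy $R$ of the dihedral $2m$-gon, and a final segment; the braid move replaces the middle block by one of type $\underbrace{tst\cdots}_m$ with the folds kept at the same index positions. Since the orientation $\tilde\phi_\sigma$ is wall consistent and periodic, its restriction to the hyperplanes meeting $R$ is governed entirely by $\partial\tilde\phi_\sigma = \phi_\sigma$, i.e. by which side of each of the $m$ wall-classes through the center of $R$ the chamber $\sigma$ lies on — equivalently by an alcove (chamber) $\bar\sigma$ in the dihedral boundary, giving an \emph{alcove orientation} on the $2m$-gon. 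So the whole problem collapses to: in the dihedral Coxeter complex with an alcove orientation $\phi_{\bar\sigma}$, the two maximal unfolded galleries of types $sts\cdots$ and $tst\cdots$ from a fixed start alcove have, after allowing multifoldings at matching index positions, exactly the same set of achievable end-alcoves; and moreover each matching pair of folded galleries has the same end-alcove (so the ambient initial/final segments glue back consistently). The first part I would check by the explicit combinatorics of folded galleries in a polygon (a positively folded gallery in the $2m$-gon with respect to $\phi_{\bar\sigma}$ is determined by a "bounce" pattern off the walls on the $\bar\sigma$-side, and the two orientations of traversal are symmetric); the end-alcove statement follows from Lemma~\ref{lem:footprint final alcove}, since $\type(\foot(\cdot))$ of the two blocks give the same element of $W_{\{s,t\}}$ precisely when the fold patterns are "compatible", and the orientation makes the admissible fold patterns match up.

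The main obstacle I expect is handling the fold \emph{at the boundary of the block}: a fold occurring at the last panel of the braid-move block interacts with the first alcove of the final segment, and after the braid move the alcove at that junction may a priori differ, threatening both the positivity of later folds and the final end-alcove. I would deal with this by proving that, within the dihedral residue and under the alcove orientation $\phi_{\bar\sigma}$, a positively folded gallery that does not fold at its last panel ends at the same alcove regardless of which of the two braid-equivalent types it has — this uses that both maximal galleries $sts\cdots$ and $tst\cdots$ of length $m$ are reduced and reach the same long element of $W_{\{s,t\}}$ when unfolded, and that a fold at an interior panel "reflects" the tail in a way that the orientation controls symmetrically. A secondary technical point is making sure the correspondence of fold-index sets $I \leftrightarrow I'$ (same numerical positions) really is a bijection between $\tilde\phi_\sigma$-positive multifoldings; this is where wall consistency of $\tilde\phi_\sigma$ is used, guaranteeing that whether a fold at index $i$ is positive depends only on the hyperplane and side, data that is unchanged by the braid move since the two blocks cross the same multiset of hyperplanes of the residue in the same order of first crossing.
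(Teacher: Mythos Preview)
Your high-level reduction is exactly the paper's: reduce strong braid invariance to plain braid invariance via the $W$-action on chambers at infinity, reduce to a single braid move via Matsumoto--Tits, and localize to the dihedral residue $W_{\{s,t\}}\cdot\id$. You are also right that the restriction of $\tilde\phi_\sigma$ to that residue is an alcove orientation towards the unique alcove $\tilde c$ closest to $\sigma$; this is precisely what the paper uses in Lemma~\ref{lemma:weyl = short braid inv}.

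The gap is in your proposed bijection. You assert that the correspondence $I\leftrightarrow I$ (``folds kept at the same index positions'') matches positively folded galleries of type $w_{st}=sts\cdots$ to positively folded galleries of type $w_{ts}=tst\cdots$ with the \emph{same end-alcove}, and you justify this by saying the two blocks ``cross the same multiset of hyperplanes of the residue in the same order of first crossing''. Both claims are false. The two minimal galleries cross the $m$ hyperplanes of the residue in \emph{reversed} order (for $m=3$: $\gamma_{sts}$ crosses $H_s,\,H_{sts},\,H_t$ while $\gamma_{tst}$ crosses $H_t,\,H_{sts},\,H_s$). Consequently, with $I=\{1\}$ one has $\gamma_{sts}^{\{1\}}$ ending at $ts$ but $\gamma_{tst}^{\{1\}}$ ending at $st$; and the positivity conditions at index $1$ (``$\id$ on the $+$ side of $H_s$'' versus ``$\id$ on the $+$ side of $H_t$'') are in general different. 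The paper's Remark before Definition~\ref{def:braid invariance} warns of exactly this: a naive braid move on a folded gallery is not well defined because the replacement sub-gallery may end elsewhere.

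The paper circumvents this by \emph{not} constructing any bijection of fold-index sets. Its Lemma~\ref{lemma:weyl = short braid inv} shows directly, via a case analysis on the parity of $c_x$ relative to $c_m$ and on whether $\tilde c$ lies on $\gamma_{w_{st}}$, that $w_{st}\pfold{\tilde\phi_\sigma}x$ holds iff either $c_x=c_m$ or $\tilde c$ is strictly closer to $c_x$ than to $c_m$; since this criterion is symmetric in $s,t$, the equivalence with $w_{ts}\pfold{\tilde\phi_\sigma}x$ follows. To repair your argument you would need either to find a genuinely different pairing of fold sets (not the identity) that respects both positivity and end-alcoves, or to abandon the bijection idea and argue, as the paper does, at the level of reachable end-alcoves only.
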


Note that for any $x \in \aW$, we have $x\tilde\phi_\sigma = \tilde\phi_{x\sigma}$, thus strong braid invariance for all $\sigma$ follows immediately from braid invariance for all $\sigma$. For the proof of Proposition~\ref{prop:weyl = braid inv} we will need the following lemma.

\begin{lemma}[Braid invariant folds]\label{lemma:weyl = short braid inv}
	Suppose $(\aW,S)$ is a Coxeter system with Coxeter matrix $M=(m_{st})_{s,t\in S}$. Let $\tilde\phi_\sigma$ be a Weyl chamber orientation on $\Sigma$. Then for all words $w = \underbrace{stst\ldots}_{m_{st}}$, $w' = \underbrace{tsts\ldots}_{m_{st}}$ in $S$ and all $x \in \aW$, it is true that $w \pfold{\tilde\phi_\sigma} x$ if and only if  $w' \pfold{\tilde\phi_\sigma} x$.
\end{lemma}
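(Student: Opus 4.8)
The claim is a purely local statement about the two braid-equivalent words $w = \underbrace{stst\ldots}_{m_{st}}$ and $w' = \underbrace{tsts\ldots}_{m_{st}}$ of the same length $m = m_{st}$, and it concerns which alcoves arise as end-alcoves of $\tilde\phi_\sigma$-positively folded galleries of these two types starting at $\id$. The plan is to work inside the rank-two residue (or the strip of alcoves) spanned by $s$ and $t$ containing $\id$, i.e.\ the subcomplex fixed by the parabolic $W_{\{s,t\}}$-coset structure, where everything is governed by the finite (or infinite) dihedral group of order $2m_{st}$; an unfolded gallery of type $w$ or $w'$ from $\id$ stays in the $2m$ alcoves around the vertex of type $S\setminus\{s,t\}$ incident to $\id$ (when $m_{st}<\infty$), and folding can only keep it there. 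So first I would reduce to this dihedral picture: every $\tilde\phi_\sigma$-positively folded gallery of type $w$ (or $w'$) from $\id$ lives entirely in this rank-two sub-structure, and by the alternative description of the Weyl chamber orientation, the relevant orientation data is just the restriction of $\tilde\phi_\sigma$ to the $m+1$ hyperplanes through that vertex — equivalently a choice, for each of these hyperplanes, of a positive side, governed by which side $\sigma$'s representative Weyl chamber lies on.

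Next I would enumerate directly. In the dihedral complex the $2m$ alcoves form a cycle $c_0=\id, c_1, \ldots, c_{2m-1}$ (with $c_{2m}=c_0$ when $m<\infty$), and there are $m$ walls, the $j$-th wall separating $\{c_0,\ldots,c_{j-1}\}$-type arcs appropriately. An unfolded gallery of type $w=stst\ldots$ from $\id$ is forced: it is $c_0,c_1,\ldots,c_m$ going one way around; the one of type $w'=tsts\ldots$ goes the other way, $c_0,c_{2m-1},\ldots,c_m$. A folded gallery of type $w$ is obtained by choosing at which steps to fold (stammer) rather than cross; I would describe, for each target alcove $c_k$, exactly which decorated words of type $w$ reach it and under what condition on the orientation a positive fold is allowed, and then do the same for $w'$. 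The key symmetry is that the orientation $\tilde\phi_\sigma$ restricted to this dihedral slice is itself symmetric under the nontrivial diagram automorphism swapping $s$ and $t$ composed with the reflection of the cycle — because a Weyl chamber orientation is wall consistent and periodic, the positive side of each of the $m$ walls is determined consistently, and reversing the orientation of the cycle just relabels the walls without changing the set of "$\phi$-positive side" choices. Concretely I expect to show: $w\pfold{\tilde\phi_\sigma} c_k$ holds iff $c_k$ is reachable by a monotone-then-possibly-folding gallery compatible with the positive sides, and this condition is visibly invariant under the swap $w\leftrightarrow w'$.

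A cleaner way to package the enumeration, which I would prefer, uses Lemma~\ref{lem:dominant} and the valuation $\v_\phi$ together with Lemma~\ref{lem:reflection-v}: in the dihedral slice, the $\tilde\phi_\sigma$-positively folded galleries of type $w$ ending in a given alcove $c$ are controlled by $\v_{\tilde\phi_\sigma}$ along the walk, and one shows $w\pfold{\tilde\phi_\sigma} c$ iff $c$ lies in the "$\phi$-positive-reachable" arc, an interval in the cycle whose endpoints depend only on the chosen positive sides of the $m$ walls and not on the orientation ($w$ vs.\ $w'$) in which they are traversed. Since $w$ and $w'$ sweep the same $m+1$ alcoves $c_0,\ldots,c_m$ (just in opposite orders) and the set of allowed positive folds is symmetric in $s,t$, the two reachable sets coincide.

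\textbf{Main obstacle.} The conceptual content is easy, but the bookkeeping is the real work: one must handle uniformly the finite case $m_{st}<\infty$ and the infinite case $m_{st}=\infty$ (where the "cycle" is a bi-infinite line and there is no wrap-around), and one must be careful that folding a gallery of type $w$ can, a priori, move it out of the obvious $m+1$ alcoves — it cannot, because a positive fold at a panel keeps the alcove fixed and the orientation is wall consistent, but this needs to be said. The other delicate point is to verify that "positively folded of type $w$ ending at $c$" really is expressible as membership in an interval of the dihedral cycle determined symmetrically by the walls; pinning down those interval endpoints as a function of the $m$ sign choices, and checking the $s\leftrightarrow t$ symmetry of that function, is where the routine-but-nontrivial calculation lives.
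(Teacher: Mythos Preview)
Your overall strategy matches the paper's: reduce to the rank-two residue $W_{\{s,t\}}\id$, then show that the set of reachable end-alcoves admits a description that is symmetric in $s$ and $t$. But two of the concrete claims you rely on are false, and the symmetry you invoke is not the one that actually works.

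First, folded galleries of type $w$ do \emph{not} stay inside the $m+1$ alcoves $\{c_0,\ldots,c_m\}$ swept by the unfolded gallery. A single fold at position $1$ already sends $(\id,s,st,\ldots)$ to $(\id,\id,t,ts,\ldots)$, which lives on the \emph{other} half of the $2m$-gon. So both intermediate alcoves and endpoints can leave $\{c_0,\ldots,c_m\}$; your justification (``a positive fold keeps the alcove fixed'') only fixes the alcove at the fold position, not the reflected tail. Relatedly, your later sentence that $w$ and $w'$ ``sweep the same $m+1$ alcoves $c_0,\ldots,c_m$'' contradicts your own earlier (correct) description that $w'$ goes $c_0,c_{2m-1},\ldots,c_m$. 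These are not minor slips: your proposed symmetry argument (``the orientation restricted to the slice is symmetric under the diagram automorphism composed with reflection of the cycle'') depends on them, and in fact the restricted orientation is generally \emph{not} symmetric under that map --- the chamber $\sigma$ singles out a preferred direction in the residue that the reflection moves.

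What the paper does instead is identify the one alcove $\tilde c$ in the residue that lies on the $\tilde\phi_\sigma$-positive side of \emph{all} $m$ walls (the alcove ``closest to $\sigma$''), and then proves the clean criterion: $w\pfold{\tilde\phi_\sigma} x$ if and only if either $c_x=c_m$ or $\tilde c$ is strictly closer (in gallery distance) to $c_x$ than to $c_m$. This criterion is manifestly symmetric in $s$ and $t$ because both unfolded galleries end at the same alcove $c_m$. The forward direction is an easy induction (each positive fold strictly decreases distance to $\tilde c$, essentially your Lemma~\ref{lem:reflection-v} idea); the backward direction is a short case analysis on parity and on whether $\tilde c$ is already visited by $\gamma_w$, producing explicit one- or two-fold galleries. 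Your ``interval in the cycle'' intuition is salvageable precisely as this ball around $\tilde c$ together with $\{c_m\}$, but you need $\tilde c$ to state it. Finally, the $m_{st}=\infty$ case is vacuous (there is no braid relation of infinite length), so that obstacle disappears.
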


\begin{proof}
	Since the type of the footprint of any folded gallery of type $w$ or $w'$ beginning at $\id$ can only contain symbols $s$ and $t$, the end of that gallery must lie in $\aW_{\{s,t\}}\id$. Therefore it suffices to consider only $x \in \aW_{\{s,t\}}$.
	
	Let $m = m_{st}$ and put $\gamma_w\define (c_0 = \id, p_1, \ldots, p_m, c_m)$. Let $\tilde c$ be the alcove in $\aW_{\{s,t\}}\id$ that lies closest to $\sigma$ i.e. it lies on the $\tilde\phi_\sigma$-positive sides of all walls that separate alcoves in $\aW_{\{s,t\}}\id$. Observe for any reflection $r \in \aW_{\{s,t\}}$ and any alcove $c \in \aW_{\{s,t\}}\id$ that $c$ lies on the positive side of $H_r$ if and only if the gallery distance (i.e. length of shortest connecting gallery) of $\tilde c$ to $c$ is smaller than the one to $rc$.

	\emph{Claim:} Let  $x$ be any element in $\aW_{\{s,t\}}$ and write $c_x$ for the alcove it represents. Then $w \pfold{\tilde\phi_\sigma} x$ if and only if  either $c_x = c_m$ or $\tilde c$ is has shorter gallery distance to $c_x$ than to $c_m$.
	
	Note that the right-hand side of the equivalence in this claim is symmetric in $s$ and $t$, so applying the claim twice immediately yields that $w \pfold{\tilde\phi_\sigma} x$ if and only if  $w' \pfold{\tilde\phi_\sigma} x$, as desired.
	
	Let us now prove the claim. For the case $c_x = c_m$ the gallery $\gamma_w$ immediately demonstrates $w \pfold{\tilde\phi_\sigma} x$, so we may suppose $c_x \neq c_m$ from now on.
	
	Suppose that $w \pfold{\tilde\phi_\sigma} x$, so there exists a folded gallery $\gamma = \gamma_w^I$ of type $w$ starting in $\id$ and ending in $c_x$. Let $I = \{i_1, \ldots, i_k\}$ for some indices $1 \leq i_1 < \ldots < i_k \leq m$. Note that $k > 0$, since $c_x \neq c_m$. Let $c_i^j$ or $p_i^j$ denote the $i$-th alcove or panel of the gallery $\gamma_w^{i_1\ldots i_j}$ for $j = 0, \ldots, k$. Note that any such gallery $\gamma_w^{i_1\ldots i_j}$ is $\tilde\phi_\sigma$-positively folded, since each folded panel of that gallery already lies at the same position as its corresponding folded panel in $\gamma_w^I$.
	
	Then for any such $j$, the alcove $c_{i_j-1}^{j-1} = c_{i_j-1}^{j} = c_{i_j-1}^{j}$ lies on the positive side of the hyperplane $H_j$ with respect to $\tilde\phi_\sigma$ containing $p_{i_j}^j$, by positivity of $\gamma_w^{i_1\ldots i_j}$. Now $(c_{i-1}^{j-1}, p_i^{j-1}, \ldots, c_m^{j-1})$ is an unfolded gallery of the same type as $(c_{i-1}, p_i, \ldots, c_m)$ and therefore minimal. This gallery starts on the $\tilde\phi_\sigma$-positive side of $H_j$ and passes through $H_j$. Therefore $c_m^{j-1}$ lies on the $\tilde\phi_\sigma$-negative side of $H_j$; and $c_m^j$, obtained from $c_m^{j-1}$ by reflection across $H_j$, is closer to $\tilde c$ than $c_m^{j-1}$ is.
	
	By induction over $j$, we find that $\tilde c$ is closer to $c_m^k = c_x$ than to $c_m^0 = c_m$. This proves one side of the claim.
	
	Suppose now that $\tilde c$ lies closer to $c_x$ than to $c_m$. We wish to find some $\tilde\phi_\sigma$-positive multifolding of $\gamma_w$ ending at $c_x$.
	
	\emph{Case 1:} $c_x$ and $c_m$ have different parity. Then there is a reflection $r \in \aW_{\{s,t\}}$ such that $rc_m = c_x$. Since $c_m$ and $\id$ lie on different sides of the hyperplane $H_r$, there is some $i$ such that $p_i$ lies on $H_r$.
	
	Then $c_x$ lies on the $\tilde\phi_\sigma$-positive side of $H_r$ and $c_m$ lies on the $\tilde\phi_\sigma$-negative side. Since $\gamma_w$ is minimal, this means that $c_{i-1}$ lies on the $\tilde\phi_\sigma$-positive side of $H_r$, so $\gamma_w^i$ is a $\tilde\phi_\sigma$-positively folded gallery of type $w$ from $\id$ to $rc_m = c_x$, demonstrating $w \pfold{\tilde\phi_\sigma} x$.
	
	\emph{Case 2:} $c_x$ and $c_m$ have the same parity, and $\tilde c = c_i$ for some $i = 0, \ldots, m$. We may assume $i \neq m$ because otherwise $\tilde c$ would lie closer to $c_x$ than to itself, which is not possible.
	
	Now the gallery $\gamma_w^{i+1}$ is positively folded and ends at $rc_m$ where $r$ is the reflection across the panel $p_{i+1}$. Since $p_{i+1}$ is adjacent to $\tilde c$, we find that the combinatorial distance between $\tilde c$ and $rc_m$ is exactly $1$ less than the distance between $\tilde c$ and $\tilde c_m$. Because of parity, $\tilde c$ still lies closer to $c_x$ than to $rc_m$.
	
	Since $c_x$ and $rc_m$ now have different parity, we find a reflection $r' \in \aW_{\{s,t\}}$ such that $r'rc_m = c_x$. Using our observation at the beginning of this proof, we find that the hyperplane $H$ corresponding to $r'$ now separates $\tilde c$ and $rc_m$. Since $(rc_{i+1} = \tilde c, rp_{i+2},\ldots, rc_m)$ is a minimal gallery from $\tilde c$ to $rc_m$, there exists some $j > i+1$ such that $rp_j$, the $j$-th panel of $\gamma_w^{i+1}$, lies in $H$. Therefore the gallery $(\gamma_w^{i+1})^j$ is the desired $\tilde\phi_\sigma$-positively folded gallery from $\id$ to $c_x$ of type $w$.
	
	\emph{Case 3:} $c_x$ and $c_m$ have the same parity, but $\tilde c \notin \{c_0, \ldots, c_m\}$. Then it must be the case that $c_0$ is closer to $\tilde c$ than $c_1$, so $\gamma_w^1 = (c_0, p_1, sc_1, sp_2, \ldots, sc_m)$ is $\tilde\phi_\sigma$-positively folded, and the alcoves of $\gamma_w^1$ contain all those alcoves in $\aW_{\{s,t\}}\id$ not yet covered by $\{c_0, \ldots, c_m\}$. Therefore $\tilde c = sc_i$ for some $1 < i \leq m$. Since $sc_m$ is adjacent to $c_m$, the alcove $\tilde c$ still lies closer to $c_x$ than $sc_m$, in particular this means that $i \neq m$.
	
	Since $sc_m$ and $c_x$ have different parity, we find some reflection $r \in \aW_{\{s,t\}}$ such that $r'sc_m = c_x$, and the hyperplane $H$ corresponding to $r$ separates $\tilde c$ and $sc_m$. So we find $j$ with $i < j \leq m$ such that the panel $sp_j$ lies in $H$. The gallery $(\gamma_w^1)^j$ is now the desired $\tilde\phi_\sigma$-positively folded gallery from $\id$ to $c_x$ of type $w$.
\end{proof}

We can now prove Proposition \ref{prop:weyl = braid inv}.
\begin{proof}
	Let $s, t \in S$ and write $w_{st} = \underbrace{stst\ldots}_{m_{st}}$ as well as $w_{ts} = \underbrace{tsts\ldots}_{m_{st}}$ for the two words making up the defining Coxeter relations.	
	Let $w = uw_{st}v$ and $w' = uw_{ts}v$ be any two words in $S$ differing by a braid move. Let $m = m_{st}$ and let $k$ and $l$ be the length of the subwords $u$ and $v$ respectively. Then $n := k + m + l$ is the length of $w$ and $w'$.
	
	Suppose that $w \pfold{\tilde\phi_\sigma} x$ for some $x \in \aW$. Then there exists a $\tilde\phi_\sigma$-positively folded gallery $\gamma = (c_0 = \id, p_1, \ldots, p_n, c_n = c_x)$ of type $w$. We now want to construct a $\tilde\phi_\sigma$-positively folded gallery $\gamma'$ of type $w'$ from $\id$ to $c_x$.
	
	Consider the subgallery $\gamma_1 := (c_k, p_{k+1}, \ldots, p_{k+m}, c_{k+m})$ of $\gamma$. This subgallery is a $\tilde\phi_\sigma$-positively folded gallery of type $w_{st}$. Choose $y, z \in W$ such that $c_k = c_y$ and $c_{k+m} = c_{yz}$. Then $\gamma_2 := y^{-1}\gamma_1$ is a $y^{-1}\tilde\phi_\sigma$-positively folded gallery of type $w_{st}$ from $\id$ to $y^{-1}c_{yz} = c_z$, which means that $w_{st} \pfold{y^{-1}\tilde\phi_\sigma} z$.
	
	Since $y^{-1}\tilde\phi_\sigma = \tilde\phi_{y^{-1}\sigma}$ is a Weyl chamber orientation, we can apply Lemma \ref{lemma:weyl = short braid inv} and find that $w_{ts} \pfold{y^{-1}\tilde\phi_\sigma} z$, so there exists some $y^{-1}\tilde\phi_\sigma$-positively folded gallery $\gamma'_2$ of type $w_{ts}$ from $\id$ to $c_z$.
	Multiplication with $y$ yields a gallery $\gamma'_1 = (c'_k = c_y = c_k, p'_{k+1}, \ldots, p'_{k+m}, c'_{k+m} = c_{yz} = c_{k+m}) := y\gamma'_2$ of type $w_{ts}$ from $c_k$ to $c_{k+m}$ that is $yy^{-1}\tilde\phi_\sigma$-positively folded.
	
	Now the gallery
	\[\gamma' := (c_0 = \id, p_1, \ldots, p_k, c_k = c'_k, p'_{k+1}, \ldots, p'_{k+m}, c'_{k+m} = c_{k+m},\ldots, p_n, c_n)\]
	constructed from $\gamma$ and $\gamma'_1$ is $\tilde\phi_\sigma$-positively folded from $\id$ to $c_x$, and the type of $\gamma'$ is exactly $uw_{ts}v = w'$ by construction. This shows $w' \pfold{\tilde\phi_\sigma} x$ as desired.
	
	The reverse implication that $w' \pfold{\tilde\phi_\sigma} x$ implies $w \pfold{\tilde\phi_\sigma} x$, follows by exchanging the letters $s$ and $t$.
\end{proof}

%
%
\section{Shadows}\label{sec:shadows}

We are finally able to introduce the notion of a shadow.

\begin{definition}[Shadows of words]\label{def:shadow}
	Let $(W,S)$ be a Coxeter system and $\phi$ be any orientation on $\Sigma(W,S)$. Then the \emph{shadow} of a word $w$ in $S$ with respect to $\phi$ is defined as follows
	\[
	\Shadow_{\phi}(w)=\{u\in W\;\vert\; w\pfold{\phi} u \}.
	\]
	In case $\phi$ is braid invariant, we may define $\Shadow_{\phi}(x)\define \Shadow_{\phi}(w)$ for any choice of a minimal expression $w$ for $x\in \aW$. We will sometimes write $\Shadow_\phi(c)$ for $\Shadow_{\phi}(x)$ when $c$ is the alcove corresponding to $x$.   
\end{definition}

\begin{figure}[h]
	\begin{minipage}[l]{0.45\textwidth}
		\begin{overpic}[width=0.9\textwidth]{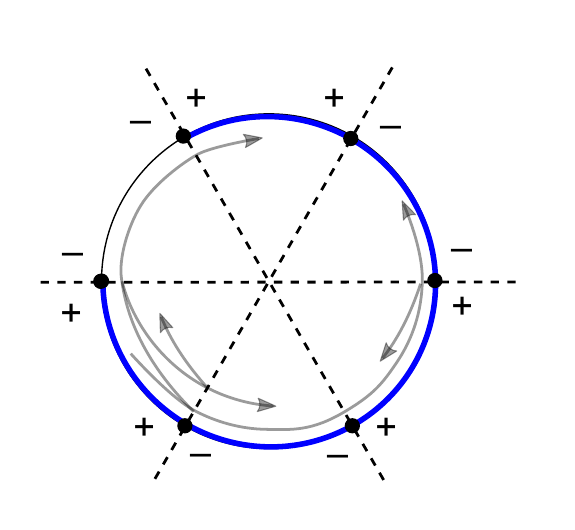}
			\put(15,25){\makebox(0,0)[cb]{$\id$}}%
			\put(80,60){\makebox(0,0)[cb]{$w_0$}}%
		\end{overpic}
	\end{minipage}
	\begin{minipage}[r]{0.45\textwidth}
		\begin{overpic}[width=0.9\textwidth]{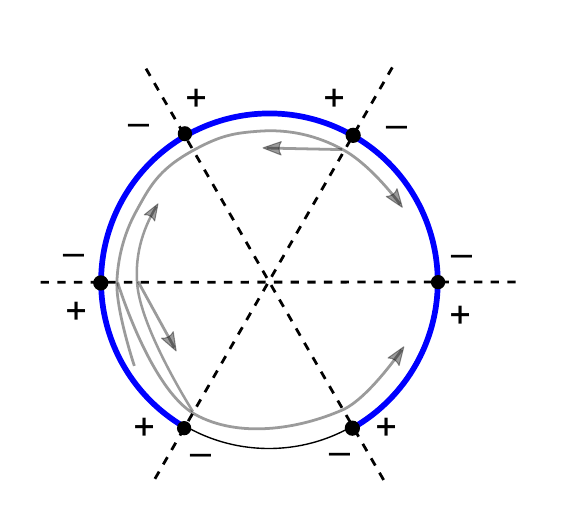}
			\put(15,25){\makebox(0,0)[cb]{$\id$}}%
			\put(80,60){\makebox(0,0)[cb]{$w_0$}}%
		\end{overpic}
	\end{minipage}
	\caption{The picture shows a non-braid-invariant orientation which hence produces different shadows (shown fat blue) for the two minimal galleries from $\id$ to $w_0$. See Example~\ref{ex:non-bi} for details. }
	
	\label{fig:not braid invariant}
\end{figure}

\begin{example}[Examples of shadows]\label{ex:non-bi}
	In general the shadow will depend on the choice of a word representing $x$, as illustrated in Figure~\ref{fig:not braid invariant}. 
	The orientation on the type $A_2$ Coxeter complex shown here is such that the two minimal galleries from $\id$ to $w_0$shown in light gray (going clockwise vs counterclockwise) produce different shadows which are colored in on the respective complex in fat blue.  Hence this orientation is not braid invariant. In the Figure we draw both their positively folded images (also as gray paths)  and their shadows (as fat blue edges in the complex). 
\end{example}

See also Figure~\ref{fig:BruhatOrder} for some examples of shadows with respect to the trivial positive orientation.

\begin{definition}[Regular and full shadows]
	Let $\aW$ be an affine Weyl group. Define for any $x \in \aW$ and any Weyl chamber orientation $\phi_a$ with $a\in\sW$  the \emph{regular shadow} of $x$ with respect to $a$ to be 
	\[
	\Shadow_a(x) := \Shadow_{\phi_a}(w)= \{y \in W : x \pfold{\phi_a} y\}
	\]
	for any minimal word $w$ with $[w]=x$.
	We define the \emph{full shadow} of $x$ to be the following union of regular shadows
	\[
	\Shadow(x) := \bigcup_{a\in\sW} \Shadow_a(x).
	\]
\end{definition}

The importance of full shadows will become clear in applications presented in \cite{MNST}  and \cite{shadows-applications}. 

\begin{figure}[h]
	\scalebox{-1}[1]{\includegraphics[width=0.5\textwidth, angle=180]{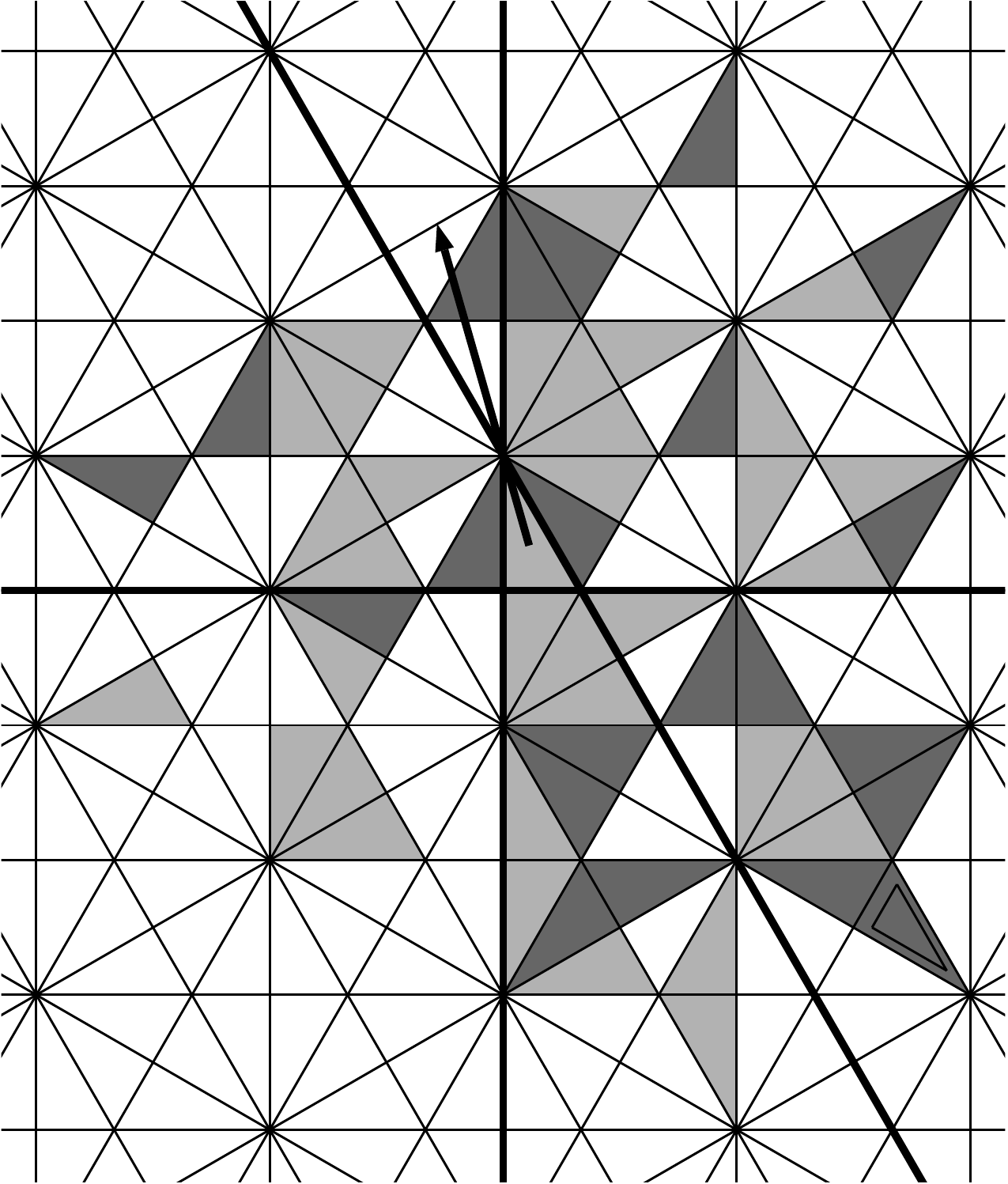}}
	
	\caption{The regular vector in the figure determines a Weyl chamber orientation. This picture shows the full and regular shadows with respect to that orientation in a type $\tilde G_2$ Coxeter group.  For details refer to Example~\ref{ex:shadows}. }
	\label{fig:shadows-soft-hard-2}
\end{figure}

\begin{remark}[Regularity]
	Regular shadows are determined by a choice of an equivalence class of  Weyl chambers in $\Sigma$ or, equivalently, a choice of a Weyl chamber at infinity. This corresponds to a regular direction (i.e. regular vector based at $0$) contained in the unique Weyl chamber representing the class that is based at $0$.  	Hence the term \emph{regular} shadow. 
\end{remark}

\begin{example}[Regular versus full shadows]\label{ex:shadows}
	In Figure~\ref{fig:shadows-soft-hard-1} and~\ref{fig:shadows-soft-hard-2}  we illustrate full and regular shadows of elements in type $\tilde A_2$ and $\tilde G_2$. In both figures the set of all shaded alcoves is the full shadow $\Shadow(c)$ of the outlined alcove $c$. The dark shaded alcoves are the elements of the regular shadow of the outlined element with respect to the orientation defined by the chamber at infinity to which the arrow points. 
\end{example}

\begin{remark}[Shadows vs retractions]
There is another vary natural geometric interpretation of shadows. Namely, one can show that a shadow of an element $x$ in some affine Coxeter group $\aW$ with respect to some orientation based at infinity can be interpreted in terms of a thick affine building of the same type as $\aW$. The shadows is the same as the image of a retraction from the same direction at infinity of the pre-image of a second type of retraction. This connection was already hidden in \cite{KostantConvexity} and will be made explicit in \cite{MNST}.  
\end{remark}

In the next proposition we formally summarize that indeed intervals of the form $[\id, x]$ in Bruhat order can be described via shadows. This is easily seen using the description of Bruhat order via the subword property.  

\begin{remark}[Subword property]
	The subword property (see \cite[Thm 2.2.2]{BjoernerBrenti}) implies that one can describe the Bruhat order as follows. Let $w=s_1s_2\ldots s_n$ be a reduced expression for $x=[w]$ and let $y\in W$. Then
	\[
	y\leq x \Leftrightarrow  
	\left. \begin{array}{cc} \text{ there exists a reduced expression $u$ for $y$ with } \\ 
	u=s_{i_1}s_{i_2}\ldots s_{i_k}, 1\leq i_1 < i_2< \ldots  < i_k \leq n.  \end{array} \right. 
	\]	
	That is $y\leq x$ if and only if for any reduced expression $w$ for $x$ there exists a reduced expression $u$ for $y$ which appears as a subword of $w$. 
\end{remark}

\begin{prop}[Bruhat order and shadows]\label{prop:bruhat_by_folding}
	Let $\phi_+$ be the trivial positive orientation and let $\phi_\id$ be the alcove orientation towards $\id$.  For any pair of elements $x,y \in W$ one has 
	\[
	x\geq y \; \Leftrightarrow \; x \pfold{\phi_+} y \; \Leftrightarrow\;  x \pfold{\phi_\id} y . 
	\]
	In particular $\Shadow_{\phi_+}(x)=\Shadow_{\phi_\id}(x)=[\id,x].$ 
\end{prop}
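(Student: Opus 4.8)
The plan is to prove the two equivalences separately and then read off the equality of shadows. The right-hand equivalence $x\pfold{\phi_+}y \Leftrightarrow x\pfold{\phi_\id}y$ is in fact the easier one: I claim that every folded gallery $\gamma$ starting at $\id$ is automatically $\phi_\id$-positively folded. Indeed, if $\gamma$ has a fold at position $i$, then $c_i=c_{i-1}$ and the panel $p_i$ spans a hyperplane $H_i$; since the gallery starts at $\id$ and the alcove $c_{i-1}$ is reached by a path that does not cross $H_i$ at that step, one checks that $c_{i-1}$ and $\id$ lie on the same side of $H_i$ — this needs a small argument using that the footprint up to position $i-1$ is a gallery from $\id$ to $c_{i-1}$, and a fold cannot move the alcove across its own panel — hence $\phi_\id(p_i,c_i)=+1$ by Definition~\ref{def:alcove orientation}. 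Wait, that is not quite automatic: a general folded gallery may cross $H_i$ earlier and then fold back. So instead I would argue directly on a minimal-length witness: if $x\pfold{\phi_+}y$, take a positively folded gallery of type $w$ (a reduced word for $x$) ending at $c_y$ with the fewest folds; for such a gallery I claim no fold is ever $\phi_\id$-negative, since a $\phi_\id$-negative fold could be unfolded using Lemma~\ref{lem:properties of folds} while keeping the source $\id$ and — after possibly re-folding elsewhere — still reaching $c_y$, contradicting minimality. This reduces the right equivalence to the left one, because $\phi_\id \leq \phi_+$ pointwise means $\Gamma^+_{\phi_\id}\subseteq\Gamma^+_{\phi_+}$ trivially, giving $x\pfold{\phi_\id}y \Rightarrow x\pfold{\phi_+}y$ for free.

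For the left equivalence $x\geq y \Leftrightarrow x\pfold{\phi_+}y$, I would use the subword property recalled just before the proposition. For the direction $x\geq y \Rightarrow x\pfold{\phi_+}y$: fix a reduced word $w=s_1\cdots s_n$ for $x$; by the subword property there is a reduced word $u=s_{i_1}\cdots s_{i_k}$ for $y$ appearing as a subword. Build the gallery $\gamma_w$ (the unfolded gallery of type $w$ from $\id$) and fold it at exactly the positions $\{1,\dots,n\}\setminus\{i_1,\dots,i_k\}$ — or rather, fold at a subset chosen so that the footprint has type $u$; one must be slightly careful because folding at one position changes which alcove subsequent panels lie in, but by Lemma~\ref{lem:footprint final alcove} the end-alcove of any fold $\gamma_w^I$ is $\id\cdot\tau(\foot(\gamma_w^I))$, and deleting the hatted letters of $w$ at positions outside $\{i_j\}$ yields exactly the word $u$, so the end-alcove is $c_{[u]}=c_y$. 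Since $\phi_+$ is the trivial positive orientation, \emph{every} fold is positive, so $\gamma_w^I\in\Gamma^+_{\phi_+}(w)$ and hence $w\pfold{\phi_+}y$, i.e. $x\pfold{\phi_+}y$. For the converse direction $x\pfold{\phi_+}y \Rightarrow x\geq y$: given any $\gamma_w\pfold{}\eta$ with $\foot(\eta)$ defining $y$ (positivity is automatic), the type of $\foot(\eta)$ is a subword of $w=\type(\gamma_w)$ obtained by deleting the hatted letters, so $y=[\tau(\foot(\eta))]$ is an element expressed by a subword of the reduced word $w$; by the subword property $y\leq x$.

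The main obstacle I anticipate is the bookkeeping in the forward direction of the left equivalence: making precise that folding $\gamma_w$ at the complement of a chosen subword really produces a footprint of the prescribed type $u$, given that multifoldings act by reflections that reshuffle later alcoves and panels. The clean way around this is to avoid tracking alcoves altogether and work purely at the level of decorated words via Lemma~\ref{lem:words}\ref{item:decwords} and Lemma~\ref{lem:footprint final alcove}: a decorated word $\hat w$ obtained from $w$ by hatting the positions outside $\{i_1,\dots,i_k\}$ corresponds via $\dtype$ to a unique gallery $\eta$ with source $\id$, its footprint has type exactly $u=s_{i_1}\cdots s_{i_k}$, and $\eta=\gamma_w^I$ for $I$ the set of hatted positions by the unfolding corollary — so $\gamma_w\pfold{}\eta$. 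Once this translation is set up, both directions of the left equivalence are immediate from the subword property, and combined with the minimality argument above the chain of equivalences and the final identity $\Shadow_{\phi_+}(x)=\Shadow_{\phi_\id}(x)=[\id,x]$ follow directly from Definition~\ref{def:shadow}.
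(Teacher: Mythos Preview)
Your treatment of the equivalence $x\geq y \Leftrightarrow x\pfold{\phi_+} y$ via the subword property and decorated words is correct and matches the paper (which simply declares this step ``obvious''). The implication $x\pfold{\phi_\id} y \Rightarrow x\pfold{\phi_+} y$ is likewise correctly handled as trivial.

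The genuine gap is in your argument for $x\pfold{\phi_+} y \Rightarrow x\pfold{\phi_\id} y$. After rightly discarding your first attempt, you propose to take a gallery reaching $c_y$ with the \emph{fewest folds} and claim such a gallery must be $\phi_\id$-positively folded. This is false. In type $A_2$ with $w=sts$ and target $y=s$, the minimum number of folds is two, and both $\gamma_w^{\{1,2\}}=(\id,\id,\id,s)$ and $\gamma_w^{\{2,3\}}=(\id,s,s,s)$ realise it. But in $\gamma_w^{\{2,3\}}$ the fold at position $3$ has $c_2=s$ sitting on the side of $H_s$ \emph{opposite} $\id$, hence is $\phi_\id$-negative. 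Your proposed repair---unfold the negative fold and ``re-fold elsewhere''---does preserve the endpoint (one must re-fold at an earlier panel lying in the same hyperplane), but this adds a fold as it removes one, so $\lvert I\rvert$ is unchanged and there is no contradiction with your minimality hypothesis.

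The paper fixes exactly this by minimising a finer invariant: among all $I$ with $\gamma_w^I$ ending at $c_y$, choose one with $\sum_{i\in I} i$ minimal. If some fold at $i\in I$ is $\phi_\id$-negative, let $j$ be the \emph{first} index whose panel in $\gamma_w^I$ lies in the same hyperplane; necessarily $j<i$ since the gallery starts at $\id$ and must cross that wall before reaching the negative side. Then $J=I\mathbin{\Delta}\{i,j\}$ yields a gallery with the same endpoint and strictly smaller index-sum (whether $j\in I$ or not), a contradiction. Replacing ``fewest folds'' by ``smallest index-sum'' in your outline makes the argument go through.
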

\begin{proof}
	From the sub-word property the first equivalence is obvious as reduced expressions are in bijection with minimal galleries. It is also obvious that $(x \pfold{\phi_\id} y \; \Rightarrow \; x \pfold{\phi_+} y)$, since any $\phi_\id$-positive folding of some gallery is also $\phi_+$-positive.
	
	To show $(x \pfold{\phi_+} y \; \Rightarrow \; x \pfold{\phi_\id} y)$: Let $w$ be a reduced expression for $x$, let $n = \ell(x)$. Among all $I \subset \{1, \ldots, n\}$ such that $\gamma_w^I$ ends at $c_y$, choose $I$ such that the sum of its elements is minimal. This ensures that $\gamma_w^I$ is $\phi_\id$-positively folded, because if $\gamma_w^I$ were not positively folded at $i \in I$, we could replace $i$ with some smaller value $j$. Specifically let $j$ be the first index such that the $i$-th and $j$-th panels of $\gamma_w^I$ lie in the same hyperplane. Then for $J := I \Delta {i,j}$ we have $\gamma_w^J$ ending at $c_y$ and $\sum(J) < \sum(I)$ because $j < i$, since every gallery from $\id$ crosses all hyperplanes from the $\phi_\id$--positive to the $\phi_\id$--negative side first. Compare also with \cite[Lemma 2.2.1]{BjoernerBrenti}.
\end{proof}

\begin{example}[Bruhat order and shadows]\label{ex:BruhatOrder}
	The shaded alcoves in Figure~\ref{fig:BruhatOrder} are the elements of the shadow of $x$ with respect to the trivial positive orientation on a type $\tilde A_2$ Coxeter complex. By the previous proposition this is the same as the Bruhat interval $[\id, x]$ and also the same as $\Shadow_\id(x)$.  
\end{example}

\begin{figure}[h]
	\begin{minipage}[l]{0.45\textwidth}
		\begin{overpic}[width=0.9\textwidth]{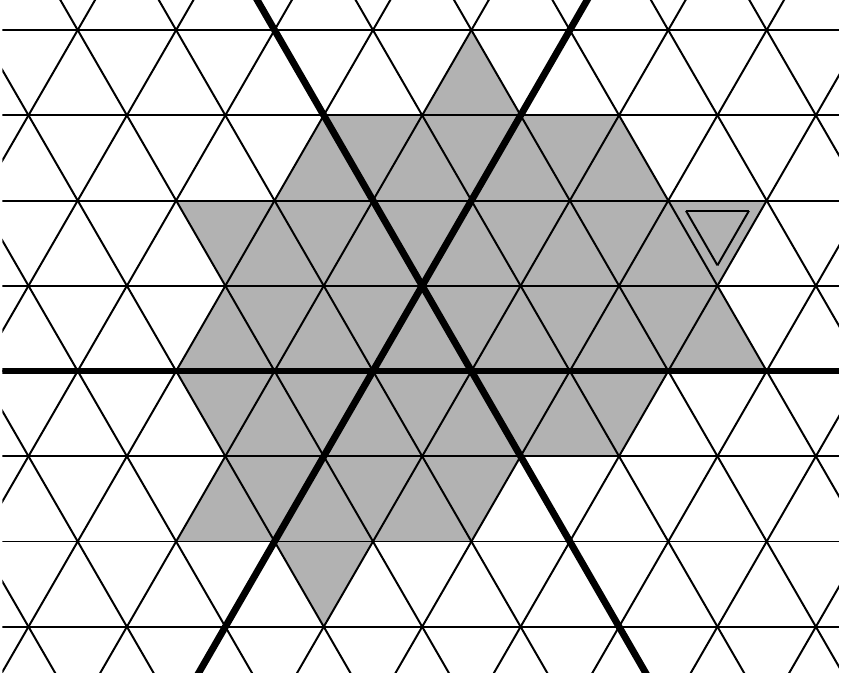}
			\put(50,37){\makebox(0,0)[cb]{$\id$}}%
			\put(85,50){\makebox(0,0)[cb]{$x$}}%
		\end{overpic}
	\end{minipage}
	\begin{minipage}[r]{0.45\textwidth}
		\begin{overpic}[width=0.9\textwidth]{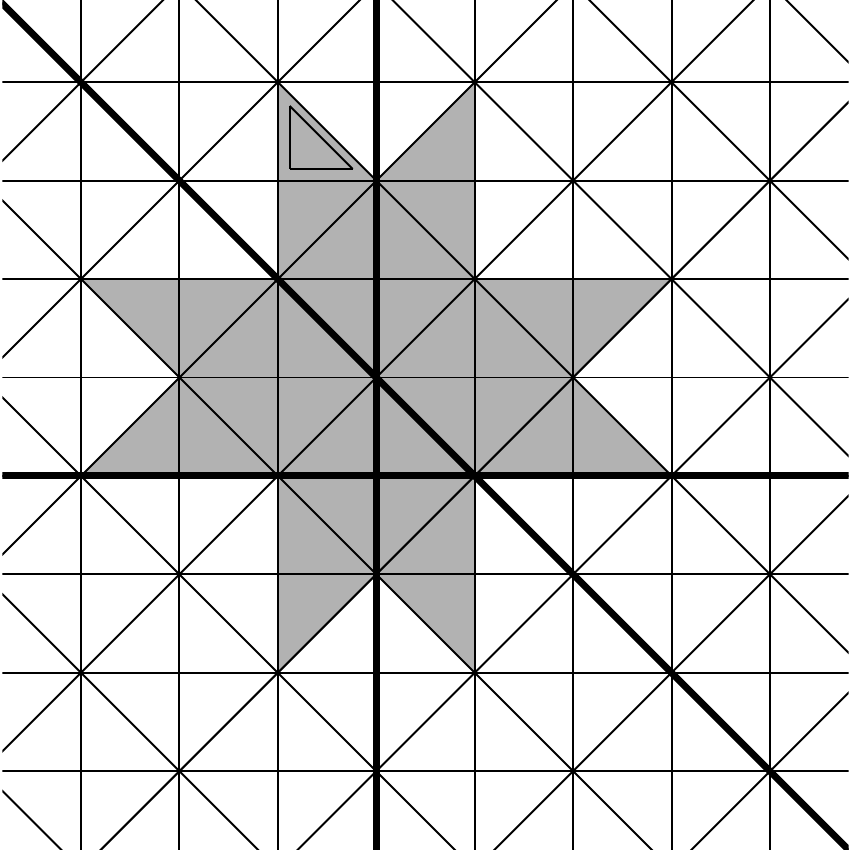}
			\put(48,45){\makebox(0,0)[cb]{$\id$}}%
			\put(36,81){\makebox(0,0)[cb]{$x$}}%
		\end{overpic}
	\end{minipage}
	\caption{The picture shows shadows $\Shadow_{\phi_+}(x)$ with respect to the trivial positive orientation $\phi_+$ in types $\tilde A_2$ (left) and $\tilde B_2$ (right). See also Example~\ref{ex:BruhatOrder} for different interpretations. }
	\label{fig:BruhatOrder}
	
\end{figure}

\begin{remark}[Other intervals in Bruhat order]
	Note that it is also possible to express intervals $[a,b]$, for $a,b\in W$, in Bruhat order in terms of positively folded galleries. To do this one needs to consider elements/alcoves $c$ in $\Shadow_+(b)$ that fold onto $a$. 
\end{remark}

%
%
\section{Recursive computation of regular shadows}\label{sec:regular shadows}
In this section, we examine the properties of regular shadows (and full) shadows and prove two identities in Theorems~ \ref{thm:regular_shadow} and \ref{thm:partial_shadow} from which we obtain two algorithms that are well suited to compute regular and full shadows. Suppose throughout the section, that $(\aW,S)$ is affine. 

\subsection{Structural results}

In the following we mean by a \emph{direction in \aW}, denoted by $\varphi\in\Dir(\aW)$, a chamber in the boundary $\partial\Sigma$. That is $\Dir(\aW)\define\Ch(\partial\Sigma(\aW, S))$.
By what we have discussed at the end of Section~\ref{sec:Coxeter} directions are in natural bijection with elements in $\sW$. 
Each direction induces a Weyl chamber orientation $\widetilde \phi_\varphi$ on $\Sigma$. We will abbreviate $\Shadow_{\widetilde\phi_\varphi}(x)$ by $\Shadow_\varphi(x)$. 

Note that the condition $\v_\varphi(s) < 0$ (resp. $>0$) in the next theorem simply means that the alcove corresponding to $s$ is on the negative (resp. positive)  side of the hyperplane separating $s$ from $\id$. 

\begin{thm}[Recursive computation of regular shadows]\label{thm:regular_shadow}
	For every $\varphi \in \Dir(\aW)$, all $x \in \aW$ and $s \in S$ the following holds.
	\begin{enumerate}[label=(\roman*)]
		\item\label{item:DR} If $s$ is in the right descent set $D_R(x)$ of $x$, then 
		\[
		\Shadow_\varphi(x) = \Shadow_\varphi(xs) \cdot s \cup \{z \in \Shadow_\varphi(xs): \v_\varphi(zs) < \v_\varphi(z)\}.
		\]
		\item\label{item:DL} If $s$ is in the left descent set $D_L(x)$ of $x$, then 
		\[
		\Shadow_\varphi(x) = \left\{\begin{array}{ll} 
		s \cdot \Shadow_{s\varphi}(sx) \cup \Shadow_\varphi(sx) & \text{ if } \v_\varphi(s) < 0  \\	
		s \cdot \Shadow_{s\varphi}(sx) & \text{ if } \v_\varphi(s) > 0.  
		\end{array} \right.
		\]
	\end{enumerate}
\end{thm}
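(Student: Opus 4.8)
The plan is to analyze, for a fixed minimal gallery $\gamma_w$ of type $w$, how the set of $\widetilde\phi_\varphi$-positive multifoldings ending at various alcoves behaves when we peel off the first or the last letter of $w$. Since $\widetilde\phi_\varphi$ is a Weyl chamber orientation, it is braid invariant by Proposition~\ref{prop:weyl = braid inv}, so $\Shadow_\varphi(x)$ is well defined and we are free to choose a convenient reduced word in each case.

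\textbf{Part \ref{item:DR}.} Suppose $s \in D_R(x)$, so $x = (xs)s$ with $\ell(xs) = \ell(x) - 1$; pick a reduced word $w' = w's$ for $x$ with $w'$ reduced for $xs$. Every gallery of type $w$ starting at $\id$ is a gallery of type $w'$ starting at $\id$, followed by one more step across a panel of type $s$, which may or may not be a fold. I would split the multifoldings $\gamma_w^I$ ending at some $c_y$ into two cases according to whether the last position $n = \ell(x)$ lies in $I$. If $n \notin I$, the gallery is a positive multifolding $\eta'$ of $\gamma_{w'}$ followed by an honest $s$-step; its endpoint is $z \cdot s$ where $z$ is the endpoint of $\eta'$, and positivity of the whole gallery is equivalent to positivity of $\eta'$ (the last step is not a fold, so imposes no condition). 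This contributes exactly $\Shadow_\varphi(xs)\cdot s$. If $n \in I$, the gallery is a positive multifolding $\eta'$ of $\gamma_{w'}$ followed by a fold at the $s$-panel; its endpoint equals the endpoint $z$ of $\eta'$, and positivity now additionally requires that $z$ be on the $\widetilde\phi_\varphi$-positive side of that $s$-panel. By Lemma~\ref{lem:reflection-v}, being on the positive side of the hyperplane separating $z$ from $zs$ is exactly the condition $\v_\varphi(zs) < \v_\varphi(z)$. This contributes $\{z \in \Shadow_\varphi(xs) : \v_\varphi(zs) < \v_\varphi(z)\}$, and the union of the two cases gives the claimed formula. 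One must double-check that $I \setminus \{n\}$ ranges over exactly the index sets producing positive multifoldings of $\gamma_{w'}$; this is where Lemma~\ref{lem:properties of multifolds} (folds commute, symmetric-difference bookkeeping) and the observation that a fold at the last panel does not interfere with earlier positivity come in.

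\textbf{Part \ref{item:DL}.} Now suppose $s \in D_L(x)$, so $x = s(sx)$ with $\ell(sx) = \ell(x) - 1$; pick a reduced word $w = s w'$ with $w'$ reduced for $sx$. A gallery of type $w$ starting at $\id$ begins with a step across the $s$-panel $p_1$ of $\id$ (which may be a fold, i.e. position $1 \in I$), and then continues with a gallery of type $w'$. The key tool is Lemma~\ref{lem:left-action}: applying $s$ to a gallery converts $\widetilde\phi_\varphi$-positive folding into $\widetilde\phi_{s\varphi}$-positive folding, and $s$ sends $\id$ to $s$ and fixes $p_1$.

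\emph{Subcase $1 \notin I$.} Then the first step is a genuine crossing into the alcove $s$, and the remainder is a gallery of type $w'$ from $s$ that is $\widetilde\phi_\varphi$-positively folded. Translating by $s^{-1} = s$: this is the same as a $\widetilde\phi_{s\varphi}$-positively folded gallery of type $w'$ from $\id$, i.e. its endpoint is $s \cdot z$ for $z \in \Shadow_{s\varphi}(sx)$. So these contribute $s\cdot \Shadow_{s\varphi}(sx)$, and crucially this happens regardless of the sign of $\v_\varphi(s)$.

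\emph{Subcase $1 \in I$.} Then the gallery starts with a fold at $p_1$, so the first alcove $\id$ is repeated and positivity requires $\id$ to be on the $\widetilde\phi_\varphi$-positive side of $p_1$; equivalently $\v_\varphi(s) < 0$ (by Lemma~\ref{lem:reflection-v} applied with $x = \id$, $r = s$: $\v_\varphi(\id) > \v_\varphi(s)$ iff $\id$ is on the positive side of $H_s$). When $\v_\varphi(s) > 0$ this subcase is empty, which gives the second line of the formula. When $\v_\varphi(s) < 0$, after the fold the gallery continues with a positively folded gallery of type $w'$ from $\id$; its endpoint is any $z \in \Shadow_\varphi(sx)$ — note we stay at $\id$, so no translation is needed and the relevant orientation is still $\widetilde\phi_\varphi$. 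This contributes $\Shadow_\varphi(sx)$, giving the first line. Assembling the two subcases yields the stated dichotomy.

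\textbf{Main obstacle.} I expect the delicate point to be the bookkeeping that every positive multifolding of $\gamma_w$ decomposes \emph{uniquely} as (initial $s$-move, possibly folded) $+$ (positive multifolding of the $w'$-part), and that positivity of the whole is equivalent to the conjunction of the local condition at the first/last panel and positivity of the $w'$-part — with no hidden interaction between the extra fold and the positivity of the rest. For \ref{item:DL} this requires care because after an initial fold, later alcoves are \emph{not} translated (they continue from $\id$), whereas after an initial crossing they are translated by $s$; conflating these is the easy mistake. I would handle this cleanly by invoking Lemma~\ref{lem:left-action} only in the crossing subcase and working directly in the fold subcase. A secondary check is that in \ref{item:DR}, the condition defining the second set is correctly read off from Lemma~\ref{lem:reflection-v}: the fold at the last $s$-panel forces the repeated alcove $z$ (not $zs$) onto the positive side, which is $\v_\varphi(z) > \v_\varphi(zs)$, matching the statement.
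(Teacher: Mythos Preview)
Your proposal is correct and follows essentially the same route as the paper: in both parts you choose the convenient reduced word ($w's$ or $sw'$), split according to whether the terminal (resp.\ initial) step is folded, invoke Lemma~\ref{lem:reflection-v} to translate the positivity condition at that panel into the inequality on $\v_\varphi$, and in part~\ref{item:DL} use Lemma~\ref{lem:left-action} to pass between $\varphi$- and $s\varphi$-positivity after a genuine first crossing. The paper's proof does exactly this, only phrased in terms of the alcoves $c_{n-1},c_n$ (resp.\ $c_0,c_1$) rather than membership of $n$ (resp.\ $1$) in the multi-folding index $I$, and it writes out the two containments $\subseteq$ and $\supseteq$ separately; your bijective decomposition collapses these into one argument.
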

\begin{proof}
	In this proof we will not distinguish between alcoves and the group elements labeling them. 
	
	To prove item \ref{item:DR} suppose that $s \in D_R(x)$. Let $w'$ be a reduced expression  for $xs$. Since $\ell(xs) < \ell(x)$ the word $w := w's$ is a reduced expression for $x$.
	
	We first prove "$\subseteq$": Let $y \in \Shadow_\varphi(x)$. Then there exists a $\varphi$-positively folded gallery $\gamma = (c_0 = \id, p_1, \ldots, c_{n-1}, p_n, c_n = y)$ of type $w$ from $\id$ to $y$.
	
	\emph{Case 1:} If $c_n = c_{n-1}$, then removing the last alcove of $\gamma$ yields a gallery of type $w'$ from $\id$ to $y$, so $y \in \Shadow_\varphi(xs)$. By $\varphi$-positivity of $\gamma$, $c_n = y$ lies on the $\varphi$-positive side of $p_n$. Since $p_n$ is of type $s$, the panel $p_n$ lies on the hyperplane $H_r$ corresponding to the reflection $r = ysy^{-1}$. By Lemma~\ref{lem:reflection-v}, this implies that $\v_\varphi(y) > \v_\varphi(ry)$, and since $ry = ysy^{-1}y = ys$, we obtain  that $y \in \{z \in \Shadow_\varphi(xs): \v_\varphi(zs) < \v_\varphi(z)\}$.
	
	\emph{Case 2}: If $c_n = c_{n-1}s$, then removing the last alcove of $\gamma$ yields a gallery of type $w'$ from $\id$ to $ys$, so $ys \in \Shadow_\varphi(xs)$ and thus $y \in \Shadow_\varphi(xs) \cdot s$.
	
	To see the converse containment "$\supseteq$" let $y \in \Shadow_\varphi(xs) \cdot s$. Then $xs \pfold{\varphi} ys$, so there exists a $\varphi$-positively folded gallery $\gamma = (c_0 = \id, p_1, \ldots, p_n, c_n = ys)$ of type $w'$. Now since the alcove $c_n = ys$ and $y$ meet in a panel $p$ of type $s$, we may extend $\gamma$ to the gallery $(c_0, p_1, \ldots, p_n, c_n, p, y)$ which is $\varphi$-positively folded from $\id$ to $y$ of type $w's = w$, so $y \in \Shadow_\varphi(x)$.
	
	Now let $y \in \{z \in \Shadow_\varphi(xs): \v_\varphi(zs) < \v_\varphi(z)\}$. Then $xs \pfold{} y$, so there exists a $\varphi$-positively folded gallery $\gamma = (c_0 = \id, p_1, \ldots, p_n, c_n = y)$ of type $w'$. Now let $p$ be the panel of $y$ of type $s$. Then $p$ lies in the hyperplane $H_r$ corresponding to the reflection $r := ysy^{-1}$. Since $ry = ysy^{-1} = ys$, we have that $\v_\varphi(ry) \leq \v_\varphi(y)$, thus $y$ lies on the positive side of $H_r$ and the gallery $(c_0, p_1, \ldots, p_n, c_n, p, y)$ is a $\varphi$-positively folded gallery of type $w's = w$ (thus of type $x$) from $\id$ to $y$.
	
	We split the proof of item \ref{item:DL} into two cases and assume first that $s \in D_L(x)$ with $\v_\varphi(s) > 0$.
	Let $w'$ be a reduced word for $sx$. Put  $w = sw'$. Since $\ell(sx) \leq \ell(x)$, the word $w$ is a reduced expression for $x$.
	
	Consider "$\subseteq$": Let $y \in \Shadow_\varphi(x)$. Then there is a $\varphi$-positively folded gallery of type $w$ from $\id$ to $y$.
	
	\emph{Case a:} Suppose $c_1 = s$. Define a sub-gallery $\gamma' = (c_1, p_2,  \ldots, p_n, c_n)$ of $\gamma$. 
	Then $\gamma'$ is $\varphi$-positively folded from $s$ to $y$, so by Lemma~\ref{lem:left-action}  the gallery $s\gamma'$ is $s\varphi$-positively folded of type $w'$ from $\id$ to $sy$. Therefore $sy \in \Shadow_{s\varphi}(sx)$ and $y \in s \cdot \Shadow_{s\varphi}(sx)$.
	
	\emph{Case b:} Suppose $c_1 = \id$. Then the sub-gallery $\gamma' = (c_1, p_2,  \ldots, p_n, c_n)$ of $\gamma$ is $\varphi$-positively folded of type $w'$ from $\id$ to $y$, so $y \in \Shadow_\varphi(sx)$.
	
	To see "$\supseteq$" let $y \in s \cdot \Shadow_{s\varphi}(sx)$. Then there exists a $s\varphi$-positively folded gallery $\gamma$ of type $w'$ from $\id$ to $sy$. By Lemma~\ref{lem:left-action}, the gallery $s\gamma$ is $\varphi$-positively folded of type $w'$ from $s$ to $y$. Let $p$ be the panel shared by alcoves $\id$ and $s$. The gallery $(\id, p, s)$ is now nonstammering of type $s$, therefore trivially $\varphi$-positively folded. This implies that extending the gallery $s\gamma$ at the front by $(\id, p, s)$ yields a gallery  $(\id, p, s) + s\gamma$ which is also $\varphi$-positively folded and runs from $\id$ to $y$. Moreover, its type is $sw' = w$, proving that $y \in \Shadow_\varphi(x)$.
	
	Now let $y \in \Shadow_\varphi(sx)$. Let $\gamma$ be a $\varphi$-positively folded gallery of type $w'$ from $\id$ to $y$. 
	Let $p$ be the panel shared by alcoves $\id$ and $s$. Since $\v_\varphi(s) < 0 = \v_\varphi(\id)$, we know that $\id$ lies on the $\varphi$-positive side of $p$ and thus the gallery $(\id, p, \id)$ is $\varphi$-positively folded of type $s$. Thus we may  extend $\gamma$ to a gallery  $(\id, p, \id) + \gamma$ which turns out to be the desired $\varphi$-positively folded gallery of type $sw' = w$ from $\id$ to $y$. Therefore $y \in \Shadow_\varphi(x)$.
	
	Assume for the second case of \ref{item:DL} that $s \in D_L(x)$ with $\v_\varphi(s) > 0$. Let $w'$ be a reduced expression for $sx$. Since $\ell(sx) \leq \ell(x)$, the word $w := sw'$ is a reduced expression for $x$.
	
	Let $y \in \Shadow_\varphi(x)$. There is a $\varphi$-positively folded gallery $\gamma = (c_0, p_1, \ldots, p_n, c_n)$ of type $w$ from $\id$ to $y$. Now $p_1$ is of type $s$ and lies on the hyperplane $H_s$, so if the alcove $s$ lies on the positive side of $H_s$ then $\id$ must lie on the negative side of $H_s$. Since $\gamma$ is positively folded, the alcove $c_1$ can not be equal to $\id$  and  therefore equals $s$. The gallery $\gamma' := (c_1, p_2, \ldots, p_n, c_n)$ is therefore a $\varphi$-positively gallery from $s$ to $y$ of type $w'$. So its image $s\gamma'$ is $s\varphi$-positively folded from $\id$ to $sy$ of type $w'$. This implies that $sy \in \Shadow_{s\varphi}(sx)$, so $y \in s \cdot \Shadow_{s\varphi}(sx)$.
	We have shown  "$\subseteq$". 
	
	We prove the opposite direction "$\supseteq$" as in the first case: let $y \in s \cdot \Shadow_{s\varphi}(sx)$. Then there exists a $s\varphi$-positively folded gallery $\gamma$ of type $w'$ from $\id$ to $sy$. By Lemma~\ref{lem:left-action}, the gallery $s\gamma$ is $\varphi$-positively folded of type $w'$ from $s$ to $y$. Let $p$ be the panel shared by alcoves $\id$ and $s$. The gallery $(\id, p, s)$ is now non stammering of type $s$ and therefore trivially $\varphi$-positively folded. So the extended gallery $(\id, p, s) + s\gamma$ is also $\varphi$-positively folded from $\id$ to $y$ of type $sw' = w$, proving that $y \in \Shadow_\varphi(x)$.
\end{proof}

We conclude this subsection  with a slightly more powerful variant of \ref{thm:regular_shadow} which we obtain by splitting up our regular shadows by translation class. 

\begin{definition}[Partial shadows]
	For an element $y\in \aW$ write $\bar{y}$ for its image in the spherical Weyl group $\sW$ under the natural projection. Then, for any $x \in \aW$, $a \in \sW$ and $\varphi \in \Dir(\aW)$ define the \emph{partial shadow in local direction $a$} to be the set 
	\[
	\Shadow_\varphi^a(x) := \{y \in \Shadow_\varphi(x) \;\vert\;  \bar{y} = a\}.
	\]
\end{definition}

\begin{thm}[Recursive computation of partial shadows]\label{thm:partial_shadow}
	Let $x, y \in \aW$ with $\ell(xy) = \ell(x) + \ell(y)$. Let $a \in \sW$ and $\varphi \in \Dir(\aW)$. Then
	
	\[ \Shadow_\varphi^a(xy) = \bigcup_{b \in \sW} \Shadow^b_{\varphi}(x) \cdot \Shadow_{b^{-1}\varphi}^{b^{-1}a}(y).\]
\end{thm}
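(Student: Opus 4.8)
The plan is to prove both inclusions by translating the shadow statement into a statement about positively folded galleries and then using the length-additivity hypothesis $\ell(xy)=\ell(x)+\ell(y)$ to concatenate, respectively split, galleries. Fix reduced expressions $u$ for $x$ and $v$ for $y$; then $w\define uv$ is a reduced expression for $xy$ by length additivity, and any minimal gallery $\gamma_w$ of type $w$ starting at $\id$ splits canonically as a minimal gallery of type $u$ from $\id$ to $c_x$ followed by a minimal gallery of type $v$ from $c_x$ to $c_{xy}$. Since the multifolding operations commute (Lemma~\ref{lem:folds commute}, Lemma~\ref{lem:properties of multifolds}), any $\widetilde\phi_\varphi$-positively folded gallery $\eta$ of type $w$ likewise splits as a concatenation $\eta = \eta_1 + \eta_2$ where $\eta_1$ has type $u$ and starts at $\id$, and $\eta_2$ has type $v$. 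Let $b$ be the spherical direction of the end-alcove of $\eta_1$, i.e. $b = \overline{[\tau(\foot(\eta_1))]}$.

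For the inclusion ``$\supseteq$'': given $z_1 \in \Shadow^b_\varphi(x)$ and $z_2 \in \Shadow_{b^{-1}\varphi}^{b^{-1}a}(y)$, pick a $\widetilde\phi_\varphi$-positively folded gallery $\eta_1$ of type $u$ from $\id$ to $c_{z_1}$, and a $\widetilde\phi_{b^{-1}\varphi}$-positively folded gallery $\eta_2'$ of type $v$ from $\id$ to $c_{z_2}$. Since $\overline{z_1}=b$, the alcove $c_{z_1}$ equals $z_1' . c_{\id}$ for an element whose spherical part is $b$; translating $\eta_2'$ by an appropriate group element $g$ with $g.c_{\id}=c_{z_1}$ and using Lemma~\ref{lem:left-action} (the $W$-action sends $\widetilde\phi_{b^{-1}\varphi}$-positively folded galleries to $\widetilde\phi_\varphi$-positively folded ones precisely because $g$ has spherical direction $b$) yields a $\widetilde\phi_\varphi$-positively folded gallery of type $v$ starting at $c_{z_1}$; concatenating with $\eta_1$ gives a $\widetilde\phi_\varphi$-positively folded gallery of type $w$ from $\id$, whose end-alcove has spherical direction $b \cdot b^{-1} a = a$. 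Thus its end-alcove lies in $\Shadow_\varphi^a(xy)$, and one checks it equals $z_1 \cdot z_2$ as group elements. The key point here is tracking how the chamber-at-infinity of the orientation transforms under the $W$-action: that $g.\widetilde\phi_{b^{-1}\varphi} = \widetilde\phi_{g . (b^{-1}\varphi)}$ and $g.(b^{-1}\varphi) = \varphi$ because the spherical direction of $g$ is $b$, which is exactly the hypothesis $\overline{z_1}=b$.

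For the inclusion ``$\subseteq$'': given $z \in \Shadow_\varphi^a(xy)$, take a $\widetilde\phi_\varphi$-positively folded gallery $\eta$ of type $w$ from $\id$ to $c_z$ and split it as $\eta = \eta_1 + \eta_2$ as above, with $\eta_1$ of type $u$ from $\id$ to some alcove $c_{z_1}$ and $\eta_2$ of type $v$ from $c_{z_1}$ to $c_z$. Set $b = \overline{z_1}$. Then $z_1 \in \Shadow^b_\varphi(x)$ directly. Applying $g^{-1}$ (where $g$ has spherical direction $b$ and $g.c_{\id} = c_{z_1}$) to $\eta_2$ and invoking Lemma~\ref{lem:left-action} gives a $\widetilde\phi_{b^{-1}\varphi}$-positively folded gallery of type $v$ from $\id$ to $g^{-1}.c_z$; its end-alcove $z_2 \define g^{-1} z$ has spherical direction $b^{-1} a$, so $z_2 \in \Shadow_{b^{-1}\varphi}^{b^{-1}a}(y)$, and $z = z_1 \cdot z_2$. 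I expect the main obstacle to be purely bookkeeping: making the identification of alcoves with group elements precise enough that ``$z = z_1 z_2$'' is literally correct (this uses Lemma~\ref{lem:footprint final alcove} applied to the two pieces of the footprint, together with the fact that $\tau(\foot(\eta)) = \tau(\foot(\eta_1))\,\tau(\foot(\eta_2))$ and that the concatenation of the footprints is what computes the end-alcove), and confirming that the hypothesis $\ell(xy)=\ell(x)+\ell(y)$ is exactly what guarantees $uv$ is reduced so that $\gamma_w$ genuinely decomposes — length additivity is not used anywhere else, and indeed without it the right-hand side would overcount.
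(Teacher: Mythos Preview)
Your proposal is correct and follows essentially the same route as the paper's proof: fix reduced expressions, split or concatenate galleries at the boundary between $u$ and $v$, and use Lemma~\ref{lem:left-action} together with the fact that the translation part of an element acts trivially on $\partial\Sigma$ so that the orientation transforms only via the spherical part $b$. The one simplification you are missing is that your ``appropriate group element $g$ with $g.c_{\id}=c_{z_1}$'' is simply $g=z_1$ itself; the paper uses this directly (writing $x'^{-1}$ and $x'\cdot$), which makes the identity $z=z_1z_2$ and the orientation change $z_1\cdot\widetilde\phi_{b^{-1}\varphi}=\widetilde\phi_\varphi$ immediate and dissolves the bookkeeping concerns you flag at the end.
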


\begin{proof}
	Let $w_1 = (s_1, \ldots, s_k)$ be a reduced expression for $x$ and suppose that $w_2 = (s_{k+1}, \ldots, s_n)$ is a reduced expression for $y$. Then $w = (s_1, \ldots, s_n)$ is a reduced expression for $xy$.
	
	To show forward inclusion, suppose $z \in \Shadow_\varphi^a(xy)$ and fix a gallery $(c_0 = \id, p_1, \ldots, p_n, c_n = z)$ of type $w$. Then $(c_0, p_1, \ldots, p_k, c_k)$ is a $\varphi$-positively folded gallery of type $w_1$ from $\id$ to $x' := c_k$, and $x'^{-1}(c_k, \ldots, p_n, c_n)$ is a $x'^{-1}\varphi$-positively folded gallery of type $w_2$ from $\id$ to $y' := x'^{-1}z$. Choosing $b$ equal to $\bar{x}'$, we find that $x' \in \Shadow_\varphi^b(x)$ and because $\bar{y}' = \bar{x}'^{-1}\bar{z} = b^{-1}a$, we find $y' \in \Shadow_{b^{-1}\varphi}^{b^{-1}a}(y)$, thus $z = x'y' \in \Shadow_\varphi^b(x) \cdot \Shadow_{b^{-1}\varphi}^{b^{-1}a}(y)$.
	
	To show reverse inclusion, suppose $z \in \Shadow_\varphi^b(x) \cdot \Shadow_{b^{-1}\varphi}^{b^{-1}a}(y)$ for some $b \in \sW$. Then $z = x'y'$ for some $x' \in \Shadow_\varphi^b(x), y' \in \Shadow_{b^{-1}\varphi}^{b^{-1}a}(y)$. Now there exists a $\varphi$-positively folded gallery $\gamma_1$ of type $w_1$ from $\id$ to $x'$ and a $b^{-1}\varphi$-positively folded gallery $\gamma_2$ of type $w_2$ from $\id$ to $y'$. Since $\bar{x}' = b$, we know that $x'\gamma_2$ is $\varphi$-positively folded from $x'$ to $x'y' = z$, so $\gamma = \gamma_1 + x'\gamma_2$ is $\varphi$-positively folded from $\id$ to $z$. Finally $\bar{z} = \bar{x}'\bar{y}' = bb^{-1}a = a$, therefore $z \in \Shadow_\varphi^a(xy)$.
\end{proof}

\subsection{Algorithms}

Much like intervals in Bruhat order have a recursive descriptions  Theorem~\ref{thm:regular_shadow} allows us to construct regular shadows recursively from regular shadows of left or right subwords.
We will now provide two algorithms. The first one uses the left-multiplication action of $\aW$ on itself and \ref{item:DR} of \ref{thm:regular_shadow}, the other uses the right-multiplication action and \cref{item:DL}.

\begin{lemma}[Algorithm L]\label{lem:algorithm_l}
	Fix a direction $\varphi\in\Dir(\aW)$ and let $x \in\aW$. Fix a reduced word  $w = (s_1, \ldots, s_n) \in S^*$ for $x$. 
	Put $A_0 = \{\id\}$ and define for $i = 1, \ldots, n$ the set  
	\[
	A_i := A_{i-1} \cdot s_i \cup \{z \in A_{i-1}  : v_\varphi(zs) < v_\varphi(z)\}.
	\]
	Then $A_n = \Shadow_\varphi(x)$.  
\end{lemma}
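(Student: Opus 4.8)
The plan is a direct induction on $i$. For $0 \le i \le n$ write $x_i \define s_1 s_2 \cdots s_i$ for the $i$-th prefix of the reduced word $w$, so that $x_0 = \id$ and $x_n = x$. I would prove by induction that $A_i = \Shadow_\varphi(x_i)$ for all $i$; the present lemma is then the case $i = n$.

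First I would collect the elementary facts needed in order to feed Theorem~\ref{thm:regular_shadow}\ref{item:DR} at each stage. Since any prefix of a reduced word is reduced, $(s_1,\dots,s_i)$ is a reduced expression for $x_i$ and $(s_1,\dots,s_{i-1})$ one for $x_i s_i = x_{i-1}$; in particular $\ell(x_i s_i) = i-1 < i = \ell(x_i)$, so $s_i \in D_R(x_i)$. Recall also that $\Shadow_\varphi$ is a well-defined function on the elements of $\aW$, because Weyl chamber orientations are (strongly) braid invariant by Proposition~\ref{prop:weyl = braid inv}, so that expressions such as $\Shadow_\varphi(x_{i-1})$ make sense independently of the reduced word chosen. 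For the base case, the only gallery of type the empty word starting at $\id$ is the trivial gallery $(\id)$, which is vacuously $\varphi$-positively folded, hence $\Shadow_\varphi(\id) = \{\id\} = A_0$.

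For the inductive step, assume $A_{i-1} = \Shadow_\varphi(x_{i-1})$. Applying Theorem~\ref{thm:regular_shadow}\ref{item:DR} to the element $x_i$ and the generator $s_i \in D_R(x_i)$ gives
\[
\Shadow_\varphi(x_i) = \Shadow_\varphi(x_i s_i)\cdot s_i \;\cup\; \{z \in \Shadow_\varphi(x_i s_i): \v_\varphi(z s_i) < \v_\varphi(z)\}.
\]
Since $x_i s_i = x_{i-1}$ and $\Shadow_\varphi(x_{i-1}) = A_{i-1}$ by the induction hypothesis, the right-hand side is precisely $A_{i-1}\cdot s_i \cup \{z \in A_{i-1}: \v_\varphi(z s_i) < \v_\varphi(z)\} = A_i$. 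This closes the induction, and setting $i = n$ yields $A_n = \Shadow_\varphi(x_n) = \Shadow_\varphi(x)$.

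There is no real obstacle here: all of the content sits in Theorem~\ref{thm:regular_shadow}, and Algorithm~L is simply the observation that unwinding its part~\ref{item:DR} along a reduced word, starting from the empty prefix, reproduces the stated iteration. The two points worth stating explicitly are that reducedness of $w$ is exactly what guarantees $s_i \in D_R(x_i)$ at every step, and that part~\ref{item:DR} leaves the orientation $\varphi$ unchanged (in contrast to part~\ref{item:DL}), so the same $\varphi$ is carried through the whole recursion. (The valuation term in the displayed definition of $A_i$, typeset as $\v_\varphi(zs)$, is to be read as $\v_\varphi(z s_i)$.)
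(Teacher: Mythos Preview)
Your proof is correct and follows exactly the approach of the paper: an induction on $i$ showing $A_i = \Shadow_\varphi(s_1\cdots s_i)$ via Theorem~\ref{thm:regular_shadow}\ref{item:DR}, which the paper states in a single sentence. Your write-up simply makes the base case, the verification that $s_i \in D_R(x_i)$, and the well-definedness of $\Shadow_\varphi$ on elements explicit.
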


\begin{proof}
	It is easy to iteratively show by \ref{thm:regular_shadow} \ref{item:DR} that $A_i = \Shadow_\varphi(s_1 \cdots s_i)$ for $i = 0, \ldots, n$.
\end{proof}

\begin{remark}[]
	Note that since $z$ and $zs$ are only separated by the hyperplane $H_{zsz^{-1}}$, $\v_\varphi(z)$ and $\v_\varphi(zs)$ only differ by $p_\varphi(z, H_{zsz^{-1}}) - p_\varphi(zs, H_{zsz^{-1}})$, so $\v_\varphi(zs) < \v_\varphi(z)$ is equivalent to the fact that $z$ lies on the $\varphi$-positive side of the panel of $z$ of type $s$.
\end{remark}

Alternatively, we can use Lemma~\ref{lem:left-action} to see that $\v_\varphi(zs) < \v_\varphi(z)$ if and only if $\v_{z^{-1}\varphi}(s) < \v_{z^{-1}\varphi}(\id) = 0$. The latter is equivalent to $p_{z^{-1}\varphi}(s, H_s) = 0$.

\begin{lemma}[Algorithm R]\label{lem:algorithm_r}
	Let $x \in \aW$, and let $w = (s_n, \ldots, s_1) \in S^*$ be a reduced expression for $x$ (note the unusual indexing).
	
	For all $\varphi \in \Dir(\aW)$ let $B^\psi_0 := \{\id\}$. For $i = 1, \ldots, n$, and all $\varphi \in \Dir(\aW)$ let
	
	$$B^\varphi_i = \begin{cases}
	s_iB^{s_i\varphi}_{i-1} \cup B^\varphi_{i-1} & \text{ if } \v_\varphi(s_i) < 0, \\
	s_iB^{s_i\varphi}_{i-1} & \text{ if } \v_\varphi(s_i) > 0.
	\end{cases}$$
	
	Then $B^\varphi_n = \Shadow_\varphi(x)$ for all $\varphi \in \Dir(\aW)$.
\end{lemma}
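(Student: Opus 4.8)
The statement to prove is Lemma~\ref{lem:algorithm_r} (Algorithm R), which asserts that the iteratively defined sets $B^\varphi_i$ satisfy $B^\varphi_n = \Shadow_\varphi(x)$ for all directions $\varphi \in \Dir(\aW)$. The natural approach is a downward induction on $i$, strengthening the statement to: for all $i = 0, 1, \ldots, n$ and all $\varphi \in \Dir(\aW)$ one has $B^\varphi_i = \Shadow_\varphi([w_i])$, where $w_i = (s_i, \ldots, s_1)$ is the length-$i$ suffix of the reduced word $w = (s_n, \ldots, s_1)$. Because the index-shifting convention in the lemma has the word being built up by \emph{left}-multiplication — $[w_i] = s_i [w_{i-1}]$ — this is exactly the situation handled by part~\ref{item:DL} of Theorem~\ref{thm:regular_shadow}, so the induction will be a direct translation of that recursion.

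First I would check the base case $i = 0$: $B^\varphi_0 = \{\id\}$ by definition, and $\Shadow_\varphi(\id)$ is the shadow of the empty word, which consists only of $\id$ (the unique, trivially $\varphi$-positively folded gallery of length $1$). Next, for the inductive step, fix $i \geq 1$ and a direction $\varphi$, and write $s = s_i$ and $x' = [w_{i-1}]$, so $[w_i] = sx'$. Since $w_i$ is reduced and begins with $s$, we have $s \in D_L([w_i])$, which is precisely the hypothesis needed to invoke Theorem~\ref{thm:regular_shadow}\ref{item:DL}. That theorem gives
\[
\Shadow_\varphi(sx') = \begin{cases} s \cdot \Shadow_{s\varphi}(x') \cup \Shadow_\varphi(x') & \text{if } \v_\varphi(s) < 0, \\ s \cdot \Shadow_{s\varphi}(x') & \text{if } \v_\varphi(s) > 0. \end{cases}
\]
Applying the induction hypothesis at level $i-1$ — to the direction $s\varphi$ in the first term and to $\varphi$ in the second — we replace $\Shadow_{s\varphi}(x')$ by $B^{s\varphi}_{i-1}$ and $\Shadow_\varphi(x')$ by $B^\varphi_{i-1}$, which turns the right-hand side into exactly the case distinction defining $B^\varphi_i$. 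This closes the induction, and taking $i = n$ yields the claim.

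One small point that must be handled carefully: the induction hypothesis at level $i-1$ is being used simultaneously for two different directions ($\varphi$ and $s\varphi$), so the strengthened statement really must be quantified over \emph{all} $\varphi \in \Dir(\aW)$ at each level — this is why the lemma is (correctly) phrased with that universal quantifier baked in, and why the families $B^\bullet_i$ are indexed by direction. A second point: one should note that Theorem~\ref{thm:regular_shadow}\ref{item:DL} as stated does not cover the degenerate case $\v_\varphi(s) = 0$, but this never occurs since $s$ and $\id$ are separated by a single hyperplane $H_s$, so $\v_\varphi(s) - \v_\varphi(\id) = \pm 1$ and $\v_\varphi(s) \neq 0$ always. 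The main (and only real) obstacle is purely bookkeeping: making sure the index conventions of the lemma match the left-multiplication recursion of Theorem~\ref{thm:regular_shadow}\ref{item:DL}; once that alignment is made explicit, the proof is a one-line induction, exactly as in the analogous Lemma~\ref{lem:algorithm_l}.
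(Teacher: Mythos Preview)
Your proposal is correct and follows exactly the paper's approach: the paper's proof is the single sentence ``It is easy to iteratively show by Theorem~\ref{thm:regular_shadow}\ref{item:DL} that $B^\varphi_i = \Shadow_\varphi(s_i \cdots s_1)$ for all $\varphi \in \Dir(\aW)$ and $i = 0, \ldots, n$,'' which is precisely the induction you spell out. Your additional remarks (the base case, the need to quantify over all $\varphi$ simultaneously, and the observation that $\v_\varphi(s) \neq 0$) simply make explicit what the paper leaves implicit.
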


\begin{proof}
	It is easy to iteratively show by \ref{thm:regular_shadow} \ref{item:DL} that $B^\varphi_i = \Shadow_\varphi(s_i \cdots s_1)$ for all $\varphi \in \Dir(\aW)$ and $i = 0, \ldots, n$.
\end{proof}

\subsection{Remarks on the computational effort} 

For a fixed orientation $\phi$ a simple yet inefficient algorithm to calculate the $\phi$-shadow of some element $x$ would be to take a minimal gallery $\gamma$ from $\id$ to $x$ and construct all $2^{\ell(x)}$ foldings $\gamma^I$ of $\gamma$. Then $\Shadow_\phi(x)$ is the set of endings of all the galleries in this set that are $\phi$-positively folded. 

Unfortunately this naive approach requires examining a number of foldings exponential in $\ell(x)$. One can immediately improve this to a polynomial-time algorithm by checking only the foldings of $\gamma$ with less than $k\define \ell(w_0)$ folds by Proposition~\ref{prop:bounds on folds} (recall that $w_0$ denotes the longest element in $\sW$). However, there are then still over $\binom{\ell(x)}{k} \mathop{\hat{\approx}} \ell(x)^k$ such foldings. So in case $\ell(w_0)$ is large this quickly becomes infeasible again. The algorithms L and R we constructed by means of Theorem~\ref{thm:regular_shadow} are more efficient.

Algorithm L can compute $A_i$ from $A_{i-1}$ using $\Theta(|A_i|)$ multiplications and $\Theta(|A_i|)$ evaluations of $p_\varphi(\cdot, \cdot)$. Since $A_is_{i+1} \cdots s_n \subset A_n = \Shadow_\varphi(x)$, the total calculation effort of Algorithm R is bounded by $\mathcal{O}(\ell(x)|\Shadow_\varphi(x)|)$. The shadow $\Shadow_\varphi(x)$  is a subset of $\{y \in \aW : \ell(y) \leq \ell(x)\}$. Hence  one can conclude from the deletion condition of Coxeter groups that the total calculation effort is bounded by $\mathcal{O}(\ell(x)\ell(x)^d) = \mathcal{O}(\ell(x)^{d+1})$. This is a potentially very large improvement over the $\Omega(\ell(x)^{\ell(w_0)})$ effort we get from our improved naive algorithm. 

Algorithm R can compute the $B^\varphi_i$ from all sets $B^\varphi_{i-1}$ using $\Theta(\sum_{\varphi \in \Dir(\aW)}|B^\varphi_i|)$ operations. Since $s_n \cdots s_{i+1} B_i^{s_n \cdots s_{i+1}\varphi} \subset B_n^\varphi = \Shadow_\varphi(x)$, the total calculation effort of Algorithm R is bounded by $\mathcal{O}(\sum_{\varphi \in \Dir(\aW)}\ell(x)|\Shadow_\varphi(x)|)$, which is the same effort as calculating all regular shadows of $x$ separately using Algorithm L.

The main difference between algorithms L and R is that Algorithm L iteratively calculates shadows in a single direction, while Algorithm R calculates shadows in all directions at once. If we want to calculate a single regular shadow of some element $x \in \aW$, then Algorithm L is preferable, especially when $\Dir(\aW)$ is large. If we want to find the full shadow of $x$, then we need the shadows for all directions anyway, so Algorithm R is preferable to repeated use of Algorithm L because Algorithm R requires much less checking whether certain chambers lie on positive sides of their panels.

\renewcommand{\refname}{Bibliography}
\bibliography{Bibliography}
\bibliographystyle{plain}
\end{document}